\def\XXint#1#2#3{{\setbox0=\hbox{$#1{#2#3}{\int}$ }
\vcenter{\hbox{$#2#3$ }}\kern-.6\wd0}}
\renewcommand{\epsilon}{\varepsilon}
\newcommand{\ang}[1]{{\left\langle{#1}\right\rangle}}
\renewcommand{\Pi}{\varPi}
\renewcommand{\phi}{\varphi}
\renewcommand{\epsilon}{\varepsilon}
\def\L2R{L_{\text{Rest}}^2}
\newcommand{\RR}{\mathbb{R}}
\def\text{\textstyle}
\def\text{\textstyle}
\def\calP{\pcal}
\def\calU{\ucal}
\newcommand{\T}{{\mathbf T}^m}
\newcommand{\wt}{\widetilde}
\newcommand{\R}{{\mathbb R}}
\newcommand{\Q}{{\mathbb Q}}
\newcommand{\Z}{{\mathbb Z}}
\newcommand{\diag}{{\operatorname{diag}}}
\renewcommand{\phi}{\varphi}
\newcommand{\bcal}{\mathcal{B}}
\newcommand{\dcal}{\mathcal{D}}
\newcommand{\ecal}{\mathcal{E}}
\newcommand{\fcal}{\mathcal{F}}
\newcommand{\ical}{\mathcal{I}}
\newcommand{\mcal}{\mathcal{M}}
\newcommand{\pcal}{\mathcal{P}}
\newcommand{\rcal}{\mathcal{R}}
\newcommand{\ocal}{\mathcal{O}}
\newcommand{\ucal}{\mathcal{U}}
\def	\Span   {{\operatorname{span}}}
\def    \t  {{\mathfrak t}}
\def    \Z  {{\mathbb Z}}
\def    \R  {{\mathbb R}}
 \def \L{{\bf L}}
\def\text{\textstyle}
\newcommand{\SE}{\mathcal{E}}
\def\sq2{\sqrt{2}}
\def\t2{{\mathbb T}^2}
\def\s2{{\mathbb S}^2}
\def\T{\mathbb{T}}
\def\R{\mathbb{R}}
\def\Z{\mathbb{Z}}
\def\sq2{\sqrt{2}}
\def\t2{{\mathbb T}^2}
\def\s2{{\mathbb S}^2}
\def\T{\mathbb{T}}
\def\R{\mathbb{R}}
\def\Z{\mathbb{Z}}
\def\hto0{\xrightarrow{\hbar\to 0}}
\def\rto0{\xrightarrow{r\to 0}}
\renewcommand{\phi}{\varphi}
\newtheorem{theo}{Theorem}[section]
\newtheorem{lem}[theo]{Lemma}
\newtheorem{prop}[theo]{Proposition}
\newtheorem{cor}[theo]{Corollary}
\newtheorem{rem}[theo]{Remark}
\theoremstyle{definition}
\newtheorem{defn}[theo]{Definition}
\numberwithin{equation}{section}
 \title{Self-focal points of ellipsoids of dimension $\geq 3$ }
\author{Sean Gomes and Steve Zelditch  }
\address{Department of Mathematics, Northwestern  University, Evanston, IL 60208, USA}
\email{zelditch@math.northwestern.edu}
\thanks{Research partially supported by NSF grant   DMS-1810747 }
\begin{document}

\maketitle
%\tableofcontents

\begin{abstract} A self-focal point of a Riemannian manifold $(M,g)$  is a point $p$ so that every geodesic starting from $p$ returns to $p$ at some
positive time. There is a first return map $\Phi_p: S^*_p M \to S^*_p M$ taking the initial direction of the geodesic to its terminal direction at $p$ at the
first return time. The self-focal point $p$ is a `pole' if $\Phi_p$ is the identity map $Id$. The  map $\Phi_p$ is said to be `twisted' if it is not the identity map $Id$  (or, infinitesimally twisted if $D_{\xi} \Phi_p$ is not the identity 
for all $\xi \in S^*_p M$).  On the standard sphere, all points are poles. On a surface of revolution, the poles are poles. On a tri-axial two-dimenional  ellipsoid, the 4 umbilic points are
self-focal but are not poles.  Little is known about  the existence or non-existence  of self-focal points for manifolds of dimension
$\geq 3$. In this article, 
we prove that ellipsoids of dimension $\geq 3$ with at least 4 distinct axes have no self-focal points. Ellipsoids with $\leq 2$ distinct axes always have self-focal points. Certain
ellipsoids with $3$ distinct axes have non-polar  self-focal points. The main technique is Moser's Lax-pair approach
to integrability of geodesic flows on ellipsoids. 

Self-focal points play an important role in the study of $L^{\infty}$-norms of Laplace eigenfunctions. The results on non-existence of self-focal
points of $\geq 4$ axial ellipsoids imply that their eigenfunctions never achieve maximal sup norm growth. 
\end{abstract}

This article is concerned with the existence of {\it self-focal points}  and of {\it poles} of  $(n-1)$-dimensional ellipsoids, \begin{equation} 
\label{ECALA}  \ecal_A:  = \{x \in \R^{n} :  \langle A^{-1} x, x \rangle = 1\}. \end{equation}
 Here,  $A$ is a real  $n \times n$ symmetric matrix with positive eigenvalues $\alpha_1 \leq  \cdots \leq \alpha_n$; it is always assumed  that $A$ is diagonal and that eigenvalues are enumerated in non-decreasing order.
$\ecal_A$ is called:
\begin{itemize}

\item  {\it multi-axial} if $A$ if all of its  eigenvalues $\{\alpha_j\}_{j=1}^n$ are distinct, i.e. have multiplicity one. 
\item {\it k-axial} if $A$ has exactly $k$ distinct eigenvalues; the  multiplicities of the distinct eigenvalues  
$\alpha_1^* <  \cdots < \alpha_k^* $ are denoted by $(m_1, \dots, m_k)$.
\end{itemize}

We are interested in the existence of special points of $\ecal_A$: 
\begin{itemize}

\item A point $x_0 \in \ecal_A$ is {\it self-focal} if there exists at time $T > 0$ so that $\exp_{x_0} T \xi = x_0$
for all $\xi \in S^*_{x_0} \ecal_A$. I.e. every geodesic `loops back'  starting at $x_0$ loops back to $x_0$ at time $T$.
\item A point $x_0 \in \ecal_A$ is a {\it pole} if there exists $T > 0$ so that $G^T(x_0, \xi) = (x_0, \xi)$, where $G^t: S^*\ecal_A
\to S^*\ecal_A$ is the geodesic flow. That is, every geodesic through $x_0$ is smoothly closed. 

\end{itemize}

 It is well-known that when $n =3$, a 2-dimensional  tri-axial ellipsoid
has $4$ umbilic points $\{x_j\}_{j=1}^4$. The term `umbilic' means that the second fundamental form of $\ecal_A$ is a scalar multiple
of the metric at $x_j$. Jacobi proved that umbilic points of an ellipsoid are self-focal \cite{J,Kl95}. However, they are not poles: with the exception 
of  two directions at  $x_j$, each geodesic from $x_j$ returns to $x_j$ but the terminal direction is different from the initial direction.   
On the other hand, a $2$-axial three dimensional ellipsoid is a surface of revolution, and the fixed points of the rotation action are poles. 
A $1$-axial ellipsoid is a standard sphere, and all of its points are poles.

The problem we raise is the existence of self-focal points and poles on higher dimensional ellipsoids. It is obvious that all points of   spheres $S^{n-1} = \ecal_I$ of any dimension are poles, and that $2$-axial ellipsoids with multiplicities $(1, n-2)$  in $\R^n$ carry an $SO(n-1)$ action whose
fixed points are poles.   It does not seem to have been known whether there exist self-focal points for any other higher dimensional ellipsoids $\ecal_A$, or of whether they are poles.  The first result answers the question if $A$ is at least $4$-axial.

\begin{theo}\label{THEO4DISTINCT} For any symmetric positive definite matrix $A$ with at least four distinct eigenvalues,  $\ecal_A$ has no self-focal points. \end{theo}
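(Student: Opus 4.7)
The plan is to exploit Moser's Lax-pair formulation of the integrable geodesic flow on $\ecal_A$ and translate the self-focal condition at a point $x_0$ into a rigid commensurability condition on the hyperelliptic frequency vector controlling the Liouville tori. The hypothesis that $A$ has at least four distinct eigenvalues (so that the genus of the spectral curve is at least $3$ and the caustic space is at least $2$-dimensional) will make this condition over-determined.

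First, I would invoke Moser's commuting integrals $F_1, \ldots, F_{n-1}$ for the geodesic flow on $\ecal_A$, whose joint regular level sets are Lagrangian tori $\Lambda_c$ of dimension $n-1$; equivalently, every geodesic is tangent to $n-1$ confocal quadrics with caustic parameters $(c_1, \ldots, c_{n-1})$ interlacing the axis squares $\alpha_1 < \cdots < \alpha_n$. In elliptic separating coordinates, the flow on $\Lambda_c$ linearizes with a frequency vector $\omega(c)$ built from hyperelliptic periods on the spectral curve
\[
\Gamma_c : \mu^2 = \prod_{i=1}^n (\lambda - \alpha_i) \prod_{j=1}^{n-1}(\lambda - c_j),
\]
of genus $n-1$. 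After fixing unit speed, the caustic-parameter space is $(n-2)$-dimensional.

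Next, I would reformulate self-focality in these variables. Because the $F_j$ are conserved and the first return map $\Phi_{x_0} : S^*_{x_0}\ecal_A \to S^*_{x_0}\ecal_A$ is a diffeomorphism, $\Phi_{x_0}$ preserves the fibers of the caustic map $c : \xi \mapsto c(\xi)$ from $S^{n-2} = S^*_{x_0}\ecal_A$ to caustic space. For each regular $c$ the fiber $\Lambda_c \cap T^*_{x_0}\ecal_A$ is a finite set whose cardinality is controlled by the branching of the elliptic chart at $x_0$. Self-focality at common time $T$ then says that the translation $\theta \mapsto \theta + T\omega(c)$ on $\Lambda_c \cong \mathbb{T}^{n-1}$ permutes this finite fiber; equivalently, $T\omega(c)$ lies in a rational sublattice of bounded index in $\mathbb{Z}^{n-1}$. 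This is a countable union of real-analytic codimension $\geq 1$ conditions on $c$.

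A rigidity argument then produces the contradiction. Under the hypothesis of at least four distinct axes, the caustic map $c : S^{n-2} \to \mathbb{R}^{n-2}$ is generically a local diffeomorphism, so its image has nonempty interior; if $x_0$ were self-focal, this open set would have to lie inside one of the countably many analytic commensurability strata, forcing $T\omega(c)$ to be constant modulo a fixed lattice on an open set of caustic space. Differentiating yields $D\omega(c) \equiv 0$ on that open set, contradicting the non-degeneracy of the hyperelliptic period map $c \mapsto \omega(c)$ on the family $\Gamma_c$ of curves of genus $n-1 \geq 3$, which follows from the Riemann bilinear relations and the explicit form of the separated differentials in $(\lambda, c_j)$. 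The main obstacle, and precisely where the assumption of four distinct eigenvalues is essential, is to rule out the degenerate scenario in which the image of $\xi \mapsto c(\xi)$ is not full-dimensional but concentrates on a boundary stratum where some $c_j$ collides with an $\alpha_i$ (the analog of the umbilic loci that produce self-focal but non-polar points in the tri-axial two-dimensional case). I would handle this by using the interlacing constraint together with the presence of a fourth distinct $\alpha_i$ to show that such a degenerate concentration is incompatible with $\xi$ ranging over a full $(n-2)$-sphere at $x_0$, thereby completing the argument.
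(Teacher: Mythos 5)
Your proposal has two genuine gaps, and the first is at the very center of the argument. Your rigidity step assumes that at the putative self-focal point $x_0$ the caustic map $\xi\mapsto c(\xi)$ on $S^*_{x_0}\ecal_A$ has image with nonempty interior, so that the commensurability condition on $T\omega(c)$ propagates to an open set of caustic values. But genericity of full rank for the eigenvalue map on $S^*\ecal_A$ says nothing about its restriction to the single fiber over the special point $x_0$, and in fact the paper proves the exact opposite of your assumption: at a self-focal point the eigenvalue moment map is \emph{constant} on $S^*_{x_0}\ecal_A$ (Propositions \ref{SINGLEAF} and \ref{DEGPROP}, via Lemma \ref{REGintro}), with the common value lying on the degenerate stratum where the $\lambda_j$ collide with eigenvalues of $A$. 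So the scenario you defer to the end --- ``degenerate concentration on a boundary stratum'' --- is not an exceptional case to be ruled out by interlacing; it is the only case that can occur, and deriving a contradiction from it (via the geometry of degenerate confocal quadrics, or via the variational formula showing $A-x_0\otimes x_0$ must act as a scalar on $T_{x_0}^*\ecal_A$) is the real content of the proof of Theorem \ref{THEODISTINCT}. Your sketch of how to handle it is not an argument. (Your frequency-commensurability idea is close in spirit to one ingredient the paper does use --- Lemma \ref{REGintro} rules out regular compact tori through $S^*_{x_0}$ by a resonance argument, which itself requires establishing iso-energetic non-degeneracy from Kn\"orrer's Kolmogorov non-degeneracy, Proposition \ref{MAINNDintro} --- but there the conclusion is that the ``open image'' picture never arises, rather than that it arises and is contradicted.)

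The second gap is that the theorem allows repeated eigenvalues (at least four distinct axis lengths, arbitrary multiplicities), while your entire framework --- the spectral curve $\mu^2=\prod_i(\lambda-\alpha_i)\prod_j(\lambda-c_j)$, Jacobi separation, the $(n-2)$-dimensional caustic space, the Chasles tangency interpretation --- is only available in the multi-axial case; with multiple axes the confocal-quadric picture degenerates and part of the integrability comes from angular momenta of the $SO(m_1)\times\cdots\times SO(m_r)$ symmetry. The paper handles this by a separate reduction: an isometry of $\ecal_A$ moves any self-focal point onto a totally geodesic coordinate sub-ellipsoid with all axes distinct and at least four of them (Lemma \ref{lem:curveoffoci} and Section \ref{4AXES}), to which the multi-axial Theorem \ref{THEODISTINCT} applies. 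Without some such reduction, your argument does not address the stated theorem even if the multi-axial gap above were repaired.
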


The  extreme cases are  ellipsoids of revolution with $SO(n-1)$ symmetry,  including spheres with
$SO(n)$ symmetry.
In the  cases where $A = Id$ or when $A$ has $SO(n-1)$,  the fixed points of the symmetry
group are poles. Hence we assume in what follows that $A$ has $\geq 3$ distinct eigenvalues. The multi-axial  assumption underlies almost all known results on the geodesic flow on ellipsoids (cf.  \cite{M80,K80, K85, Au,  Kl95}).

This  problem is motivated by
recent results on   $L^{\infty}$ norms of eigenfunctions $\phi_{\lambda}$ of the Laplacian of eigenvalue $\lambda^2$. There exists a universal upper bound $||\phi_j||_{L^{\infty}} \leq C_{M,g} \lambda_j^{\frac{n-1}{2}} ||\phi_j||_{L^2}$ on the $L^{\infty}$ norm
of an $L^2$-normalized eigenfunction. When the bound is achieved by a sequence $\{\phi_{\lambda_j}\}$ of eigenfunctions,  the points where  $|\phi_{\lambda_j}(x)|$ is of maximal growth in $\lambda_j$ 
are known to be self-focal points (see Section \ref{RELATED} and  \cite{Z18} for background). In fact,  all known examples where maximal growth is achieved are poles and in \cite{SZ16} it is proved that for real analytic surfaces they must be poles. For instance, eigenfunctions of a  tri-axial
ellipsoid do not achieve maximal $L^{\infty}$-norm growth but do take their asymptotic maxima at the umbilic points. 
 Eigenfunctions on a
2-axial ellipsoid (or any surface of revolution) achieve maximal $L^{\infty}$-norm growth and have their maxima at the poles.
%It is proved in \cite{SZ16} that such bounds can only be achieved by a sequence
%of eigenfunctions of $(M,g)$ if there exists a self-focal point $x_0$ such that the first return map \eqref{PhiDEF} on $S^*_{x_0} M$
%preserves a measure $\nu$ on $S^*_{x_0} M$ which is absolutely continuous with respect to the natural Euclidean surface measure
%on the sphere (see \cite{SZ16, Z18} for background  on this problem). 
By combining the results of \cite{SZ16} with Theorem \ref{THEO4DISTINCT}, we obtain a new result about maximal
sup norm growth of eigenfunctions on ellipsoids:  \begin{cor}\label{EFCOR} For any symmetric positive definite matrix $A$ with at least four distinct eigenvalues,  any orthonormal basis of Laplace eigenfunctions $\{\phi_j\}_{j=1}^{\infty}$ 
satisfies $||\phi_j||_{L^{\infty}}  = o(\lambda_j^{\frac{n-1}{2}}). $

\end{cor}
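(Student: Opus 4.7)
The plan is to derive Corollary \ref{EFCOR} as an essentially immediate consequence of Theorem \ref{THEO4DISTINCT} combined with the sup-norm concentration result from \cite{SZ16}. That result (which builds on the converse to H\"ormander's bound together with the Safarov--Sogge analysis of the loop set) asserts that on any smooth compact Riemannian $(M,g)$, if some subsequence of an $L^2$-orthonormal Laplace eigenbasis satisfies $\|\phi_{j_k}\|_{L^\infty} \ge c\,\lambda_{j_k}^{(n-1)/2}$ for some $c>0$, then after passing to a further subsequence the points $x_k$ where $|\phi_{j_k}|$ attains its maximum converge to a point $x_\infty$ at which the loop set occupies all of $S^*_{x_\infty} \ecal_A$; in other words $x_\infty$ must be self-focal.

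First I would set up a proof by contradiction: suppose $\{\phi_j\}$ is an orthonormal basis on some $\ecal_A$ with at least four distinct axes for which the conclusion fails. Then there exist $c>0$ and a subsequence $\phi_{j_k}$ with $\|\phi_{j_k}\|_{L^\infty} \ge c\,\lambda_{j_k}^{(n-1)/2}$.

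Next, I would invoke the concentration statement above to produce a self-focal point $x_\infty \in \ecal_A$. The ellipsoid $\ecal_A$ is compact, boundaryless, and real-analytic, so the hypotheses of \cite{SZ16} are fulfilled. Finally, Theorem \ref{THEO4DISTINCT} asserts that $\ecal_A$ carries no self-focal points whatsoever, which contradicts the existence of $x_\infty$. This contradiction rules out the supposed subsequence and establishes $\|\phi_j\|_{L^\infty} = o(\lambda_j^{(n-1)/2})$.

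The only genuine obstacle is bookkeeping about which statement to quote from \cite{SZ16}: that paper is phrased in terms of the stronger conclusion that maximal-growth limit points on a real-analytic surface must in fact be poles, whereas we need only the weaker self-focal conclusion, which is proved as the first step there and requires no analyticity (only the Safarov--Sogge characterization of cases where $\|\phi_\lambda(x_\infty)\| = \Omega(\lambda^{(n-1)/2})$ in terms of the measure of the loop set at $x_\infty$). Once this weak form is extracted, the remainder is pure syllogism with Theorem \ref{THEO4DISTINCT}, so no further difficulty arises.
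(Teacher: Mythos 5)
Your argument is correct and is essentially the paper's own (one-line) proof: combine the result of \cite{SZ16} --- on a compact real-analytic manifold, maximal sup-norm growth forces the existence of a self-focal point --- with Theorem \ref{THEO4DISTINCT}, which rules out self-focal points on ellipsoids with at least four distinct axes. One small caveat: your parenthetical claim that the self-focal conclusion requires no analyticity is not quite accurate --- in the merely smooth setting the Sogge--Zelditch/Safarov analysis only yields a point whose loop set has \emph{positive measure}, and the upgrade to a genuine self-focal point (all of $S^*_{x_\infty}\ecal_A$ looping back at a common time) uses real analyticity --- but since $\ecal_A$ is real analytic this does not affect the validity of your argument.
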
 
See Section \ref{RELATED} for further comments on applications to eigenfunctions.

 The eigenfunction results  raise the question whether there exist compact real analytic Riemannian manifolds $(M,g)$ of dimension
 $\geq 3$ which possess self-focal points, in particular non-polar self-focal points. 
 Compact Riemannian manifolds with self-focal points are rare, and those with  non-polar self-focal points are rarer. Ellipsoids would seem to be
 a natural hunting ground for such points.    Although Jacobi's theorem that umbilic points of an ellipsoid are self-focal is very well known, we are not aware of any prior studies on its possible generalizations to higher dimensions  (see \cite{DDB,DD07} for detailed analyses of geodesics and umbilic curves 
 on three-dimensional ellipsoids and some generalizations to  higher dimensional ellipsoids; see also \cite{IK04,IK10,IK19} for results
 on cut-loci and conjugate loci). Moreover, we are not aware of any known examples 
of compact Riemannian manifolds of dimension $ \geq 3$ which possess non-polar  self-focal points, and in particular no real-analytic examples.

In the case of  tri-axial ellipsoids, self-focal points are umbilic points, i.e. points where the principal curvatures coincide. One may ask if
there exists a relation between umbilicity and self-focality in higher dimensions. 
 Self-focality is an intrinsic dynamical property of the geodesic
flow and  the natural projection $\pi: T^* M \to M$. The umbilic property is an extrinsic property
defined by the second fundamental form of the hypersurface. It is easy to give examples of non-umbilic self-focal points: Since every point of a Zoll manifold is a self-focal point, but seldom are the points umbilic,  perfect self-focal points of embedded hypersurfaces are not generally umbilic.

Theorem \ref{THEO4DISTINCT} raises the question whether there exist $3$-axial ellipsoids with self-focal points and, in particular,
non-polar self-focal points.  This question turns out to be more complicated than the case of $\geq 4$ axial ellipsoids, because it turns
out there do exist examples with self-focal points. Classifying all the possible types would be lengthy and complicated,  and we only
present a partial result. To state the result, we need some further notation. A
self-focal point $x_0$ is a point such that  the geodesic flow  \begin{equation} \label{GEOFLOW} G^t: S^* \ecal_A \to S^*\ecal_A \end{equation} 
takes the unit cosphere $S^*_{x_0} \ecal_A$ to itself at some time, which we normalize to be $2 \pi$. The first return map on directions is
defined by,
 \begin{equation} \label{PhiDEF} \Phi_{x_0} = G^{2 \pi} : S^*_{x_0} \ecal_A \to
S^*_{x_0} \ecal_A, \end{equation}
taking the initial direction $\xi$ at a  self-focal point  to the terminal direction $G^{2 \pi}(x_j, \xi)$ of the loop. The self-focal point
is  `polar' if $\Phi_{x_0} = Id$. We say that $\Phi_{x_0}$ is `twisted' if, for all $\xi$, $D_{\xi} \Phi_{x_0}$ never has $1$ as an eigenvalue;
otherwise we say it is `untwisted. For instance, the first return map at an umbilic point of a two-dimensional tri-axial ellipsoid is twisted.

 \begin{prop} \label{3AXESPROP} Suppose that $\ecal_A \subset \R^n$ is a $3$-axial ellipsoid with axis multiplicities
 $(1, n-2, 1)$. Then $\ecal_A$ has a `twisted'  self-focal point of the form $u = (x_1,\vec 0, x_2)$, where $(x_1, 0, x_2)$
 is the umbilic point of the two-dimensional ellipsoid with the same axis lengths.
 
   \end{prop}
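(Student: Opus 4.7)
\textbf{Proof proposal for Proposition \ref{3AXESPROP}.} The plan is to exploit the $SO(n-2)$ rotational symmetry of $\ecal_A$ which fixes $u=(x_1,\vec 0,x_2)$ --- this symmetry is present precisely because the middle eigenvalue $\alpha_2^*$ of $A$ has multiplicity $n-2$ --- in order to reduce the analysis at $u$ to Jacobi's classical self-focality theorem for two-dimensional tri-axial ellipsoids at an umbilic point. The idea is to foliate $\ecal_A$ near $u$ by a family of totally geodesic $2$-dimensional tri-axial ellipsoids, each of which has $u$ as one of its umbilic points, and each of which captures one ``orbit's worth'' of initial directions in $S_u^*\ecal_A$.

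Concretely, for each unit vector $m$ in the middle coordinate subspace $M := \mathrm{span}(e_2,\ldots,e_{n-1})$, set $V_m := \mathrm{span}(e_1,m,e_n)\subset \R^n$ and $\Sigma_m := V_m\cap \ecal_A$. In the orthonormal basis $(e_1,m,e_n)$ of $V_m$, $\Sigma_m$ is the standard $2$-dimensional tri-axial ellipsoid with axis lengths $\sqrt{\alpha_1^*},\sqrt{\alpha_2^*},\sqrt{\alpha_3^*}$, and $u$ appears as the umbilic point $(x_1,0,x_2)$. Each $\Sigma_m$ is totally geodesic in $\ecal_A$: it is the fixed set of the linear involution of $\R^n$ that reflects the $(n-3)$-dimensional subspace of $M$ orthogonal to $m$, this involution preserves the quadratic form $\langle A^{-1} x,x\rangle$ since it only permutes vectors in the middle eigenspace of $A$, so it restricts to an isometric involution of $\ecal_A$, whose fixed-point set is necessarily totally geodesic.

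Given any $\xi\in S_u^*\ecal_A$, decompose $\xi = av + w$, where $v$ is the unit vector in $\mathrm{span}(e_1,e_n)\cap T_u\ecal_A$ (the unique unit direction orthogonal to the normal $A^{-1}u$ within that plane) and $w\in M$, and set $m := w/|w|$ if $w\ne 0$ (any unit $m\in M$ otherwise). Then $\xi\in T_u\Sigma_m$, and by the totally geodesic property the geodesic $\gamma_\xi$ of $\ecal_A$ coincides with the corresponding geodesic of $\Sigma_m$. Jacobi's theorem applied to the umbilic point $u$ of $\Sigma_m$ gives a return time $T>0$, depending only on the fixed axis lengths $\alpha_1^*,\alpha_2^*,\alpha_3^*$, such that $\gamma_\xi(T) = u$. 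Normalizing $T=2\pi$, this establishes self-focality of $u$ and defines $\Phi_u: S_u^*\ecal_A\to S_u^*\ecal_A$. To see that $\Phi_u$ is twisted, restrict $\Phi_u$ to the circle $S_u^*\Sigma_m\subset S_u^*\ecal_A$: by the totally geodesic property this restriction coincides with the classical $2$-dimensional first-return map at the umbilic point of the tri-axial ellipsoid $\Sigma_m$, which by Jacobi/Klingenberg (see \cite{Kl95}) is not the identity (in fact only two exceptional directions give smoothly closed geodesics).

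The main obstacle I anticipate is verifying that the Jacobi return time $T$ is genuinely uniform across all slices $\Sigma_m$ so that the slice-by-slice return times assemble into a single first-return map on $S_u^*\ecal_A$; this should follow from the observation that each $\Sigma_m$ is isometric, via an orthogonal change of basis of $V_m$, to a single fixed model tri-axial ellipsoid sending $u$ to the standard umbilic point, so that $T$ depends only on $\alpha_1^*,\alpha_2^*,\alpha_3^*$. A secondary interpretive subtlety concerns the stronger ``twisted'' definition given in the body of the paper (requiring that $D_\xi\Phi_u$ never have $1$ as an eigenvalue): $SO(n-2)$-equivariance of $\Phi_u$ forces any fixed direction $\xi$ to have its entire $SO(n-2)$-orbit fixed pointwise, so tangent-to-orbit eigenvalues of $D_\xi\Phi_u$ are identically $1$ at such $\xi$; however, the proposition as stated uses the weaker abstract-level notion $\Phi_u\ne\mathrm{Id}$, for which the reduction to the $2$-dimensional umbilic case suffices.
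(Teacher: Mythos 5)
Your proposal is correct and follows essentially the same route as the paper: both arguments cover $S_u^*\ecal_A$ by the totally geodesic two-dimensional tri-axial slices through $u$ (your fixed-point sets of reflections are exactly the $SO(n-2)$-rotates of the coordinate embedding $\iota(\ecal_{\vec a})$ used in the paper), apply Jacobi's theorem at the umbilic point of each slice with the common return time, and deduce the `twisted' (non-polar) property from the fact that the two-dimensional first-return map at an umbilic point is not the identity. Your closing caveat about the stronger infinitesimal notion of twistedness is consistent with the paper, which likewise only establishes that $u$ is not a pole.
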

   
   It is rather complicated to study all possible tri-axial ellipsoids of general dimensions, and there do not seem to exist
   any prior studies of them in dimension $> 3$. Three dimensional ellipsoids with multiple axes are studied in \cite{DD07}.
   In Section \ref{3AXESSECT} we also discuss the case of $(1, n-2, 1)$ ellipsoids but even in the three-dimensional
   case we leave it open whether it has self-focal points. If any self-focal point exits, its  $SO(n-2)$-orbit consists of
   self-focal points, and they are almost un-twisted. We point out in Lemma \ref{ROSOLEM}  that the question whether a (2, 1,1) three-dimensional tri-axial ellipsoid
   has a self-focal point is equivalent to the question whether the usual umbilic points of the corresponding two-dimensional tri-axial
  ellipsoid are self-focal for every reduced Rosochatius Hamiltonian flow.
  
\subsection{Complete integrability and Lagrangian torus foliations}
The proof of Theorem \ref{THEO4DISTINCT}  is based on an analysis of  special properties of geodesic flows of ellipsoids that may have an independent interest. 
It is a classical fact (first proved by C. G. J.  Jacobi circa 1840 \cite{J}) that  multi-axial ellipsoids (with distinct axes)  have completely integrable geodesic flow. At first sight, 
it would appear that complete integrability  is incompatible with existence of non-polar self-focal points in dimensions $n-1 \geq 3$.
As reviewed in Section \ref{MMSECT},  complete integrability of \eqref{GEOFLOW} means that there exists    a Hamiltonian $\R^{n-2}$ action on $T^* \ecal_A \backslash 0$ commuting
with the geodesic flow, i.e. there exist $n-2$ Poisson commuting `invariant functions' $\{p_1, \dots, p_{n-2}\}$ in addition to the metric norm function $|\xi|_g^2 = \sum_{i,j} g^{ij}(x) \xi_i \xi_j$ generating the geodesic flow \eqref{GEOFLOW}.

The geodesic flow induces a $G^t$-invariant Lagrangian
submanifold,   the flow-out of $S_{x_0}^* \ecal_A$,  \begin{equation}
\label{FLDEF}\Lambda_x:= FL(S^*_x \ecal_A): = \bigcup_{t \in \R} G^t (S^*_x \ecal_A)  \subset S^*\ecal_A, \end{equation} diffeomorphic to $S^1 \times S^{n-2}$.
The flow-out is the `mapping torus' of the first return map,
  It follows from the Liouville-Arnold theorem (see Theorem \ref{LATH}) that
  % `compact regular
%level sets'  of $\pcal$ are finite unions of Lagrangian tori in 
$ S^* \ecal_A$ has a (singular) foliation by invariant tori. Since \eqref{FLDEF} would  an invariant Lagrangian
submanifold of $S^*\ecal_A$ of topological type $S^1 \times S^{n-2}$, it would be rather surprising to find it in addition to the foliation
by Lagrangian tori. Indeed, we prove that it cannot exist. The subtlety is that the orbits through $(x, \xi) \in S_x^*\ecal$ at a self-focal
point could  all be singular and \eqref{FLDEF} might be a union of such singular orbits.

 In the modern treatment of
J. Moser \cite{M80}, the  additional  `constants of the motion'  are essentially  the 
eigenvalues of J. Moser's Lax  matrices, \begin{equation} \label{LDEF} L(x, \xi) = P_{\xi} (A - x \otimes x^*) P_{\xi},\;\; (x, \xi) \in \R^n \times \R^n.  \end{equation}
  Here, $P_{\xi} = I - \xi \otimes \xi^*$ is the orthogonal projection onto the orthogonal subspace to $\xi$.  Clearly,  $L(x, \xi) \xi =0$ and if
  $(x, \xi) \in S^* \ecal_A$ then $P_{\xi} {\bf n}_x  =0$ where ${\bf n}_x$ is the unit normal at $x$. The Lax matrix  \eqref{LDEF} 
has $n-2$ other  eigenvalues $\{\lambda_j(x, \xi)\}_{j=1}^{n-2}$, which   are invariants of the geodesic flow$L(x, \xi)$. Elementary symmetric
functions of the eigenvalues can   be assembled into a moment
map $\pcal_e= (p_1, \dots, p_{n-2}): S^* \ecal_A \to \R^{n-2}$, which we call the {\it eigenvalue moment map} (see \eqref{pcaldef}). The full moment map 
is then $\pcal = (|\xi|_g^2, \pcal_e):S^* \ecal_A \to \R^{n-1}$.  It requires a substantial analysis of the
Moser eigenvalue invariants to prove that this cannot happen.

\subsection{\label{OUTLINESECT} Outline of the proofs}  %Among  the geodesic loops at an umbilic point of a 2-dimensional ellipsoid, only two directions give loops which are smoothly closed geodesics; for %the rest, there
%is a non-zero angle between initial direction and the terminal direction. 

 We begin by proving  that a multi-axial ellipsoid $\ecal_A$ 
of dimension $\geq 3$ has no self-focal points of any kind. 
Our first result is a simpler version of Theorem \ref{THEO4DISTINCT}.
\begin{theo}\label{THEODISTINCT}  Suppose that $\ecal_A$ is multi-axial, i.e. that $A$  has simple eigenvalues. Then $\ecal_A$ has no self-focal points. \end{theo}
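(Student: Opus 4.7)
My plan is to argue by contradiction, combining Moser's integrable structure with the topology of the flow-out. Assume $x_0 \in \ecal_A$ is a self-focal point of a multi-axial ellipsoid with $\dim \ecal_A = n - 1 \geq 3$. The flow-out $\Lambda_{x_0} := \bigcup_{t \in \R} G^t(S^*_{x_0}\ecal_A)$ is then a smooth $G^t$-invariant Lagrangian submanifold of $S^*\ecal_A$ of dimension $n - 1$, diffeomorphic to the mapping torus of $\Phi_{x_0}$ on $S^{n-2}$. Lagrangianity follows because the cotangent fiber $T^*_{x_0}\ecal_A$ is Lagrangian in $T^*\ecal_A$, the cosphere $S^*_{x_0}\ecal_A$ is isotropic inside it, and flow-out by the geodesic Hamiltonian raises the dimension by one while preserving isotropy on the energy surface. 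In particular, every orbit of $G^t$ in $\Lambda_{x_0}$ is periodic with period dividing $2\pi$.

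Next I would invoke the Moser eigenvalue moment map $\pcal_e\colon S^*\ecal_A \to \R^{n-2}$. Since $\pcal_e$ is $G^t$-invariant, the image $K := \pcal_e(\Lambda_{x_0}) = \pcal_e(S^*_{x_0}\ecal_A)$ is a compact connected subset of $\R^{n-2}$. For every regular value $c \in K$, the fiber $\pcal_e^{-1}(c) \subset S^*\ecal_A$ is a disjoint union of Liouville--Arnold tori $T^{n-1}$ on which $G^t$ is linear with frequency vector $\omega(c) \in \R^{n-1}$; periodicity of every orbit at $t = 2\pi$ forces the torus to be fully resonant, $\omega(c) \in \Z^{n-1}$. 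By Kolmogorov non-degeneracy of the frequency map for the Moser system (cf.\ \cite{M80, K80}), $\omega$ is a local diffeomorphism on the regular locus, so the fully resonant regular values form a discrete subset of $\R^{n-2}$. Combined with Sard's theorem for the singular values of $\pcal_e$, this shows that $K$ is contained in a set of $(n-2)$-dimensional Lebesgue measure zero in $\R^{n-2}$, and in particular has empty interior. This settles the subtle case flagged before the theorem: whether $\Lambda_{x_0}$ lies in the regular or singular locus of the moment map, its image in the base has empty interior.

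The main obstacle, and the step where the multi-axial hypothesis is essential, is to show that this conclusion is impossible: for every $x_0 \in \ecal_A$ with $A$ multi-axial, the differential of $\xi \mapsto \pcal_e(x_0, \xi)$ has rank $n - 2$ at some $\xi \in S^{n-2}$, so that $K$ contains a non-empty open subset of $\R^{n-2}$. The Moser invariants are (up to elementary symmetric functions) the $n - 1$ nonzero eigenvalues of the restriction $L(x_0, \xi)|_{\xi^\perp}$, where $L(x_0, \xi) = P_\xi (A - x_0 x_0^T) P_\xi$. Since $A$ has $n$ distinct eigenvalues, the rank-one perturbation $B_{x_0} := A - x_0 x_0^T$ has at least $n - 1$ distinct eigenvalues, and by Cauchy interlacing its restrictions to the hyperplanes $\xi^\perp$ have genuinely varying spectra. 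I would make this quantitative by writing $\pcal_e(x_0, \cdot)$ in sphero-conical (elliptic) coordinates at $x_0$ following Moser \cite{M80}, and then show the Jacobian is nonzero on an open dense subset of $S^{n-2}$. The delicate point is to verify this uniformly in $x_0$: one must identify the degeneracy locus of that Jacobian with the vanishing set of an algebraic discriminant in the axis lengths $\alpha_1, \ldots, \alpha_n$, and then observe that this discriminant is empty precisely because the $\alpha_i$ are all distinct --- higher-dimensional umbilicity is obstructed by the multi-axial hypothesis.
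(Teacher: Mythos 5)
Your argument breaks at its very first dynamical step. You assert that every $G^t$-orbit in $\Lambda_{x_0}$ is periodic with period dividing $2\pi$. But self-focality only says $G^{2\pi}(x_0,\xi)=(x_0,\Phi_{x_0}\xi)$, and the first return map $\Phi_{x_0}$ need not be the identity; the whole point of the self-focal/pole distinction (which this theorem is about) is that looping geodesics need not be closed. At an umbilic point of the two-dimensional tri-axial ellipsoid all but two of the looping geodesics are non-closed, so there is no reason a Liouville torus meeting $S^*_{x_0}\ecal_A$ is fully resonant, and your "fully resonant regular values are discrete, hence $K$ has empty interior" conclusion collapses. What actually survives is much weaker: an orbit through $(x_0,\xi)$ lies in $\Lambda_{x_0}\cap T_I$, which has positive codimension in the torus for topological reasons, so the orbit is not dense and the frequency vector satisfies \emph{one} resonance relation; turning that into a contradiction requires iso-energetic non-degeneracy of the flow on the energy surface, which does \emph{not} follow formally from Kolmogorov non-degeneracy (the two conditions are independent in general). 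The paper has to prove this implication separately, using homogeneity of the Hamiltonian (Proposition \ref{MAINNDintro}), before it can run the resonance argument; your appeal to Kolmogorov non-degeneracy alone skips exactly this point.

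The step you label the main obstacle is also a genuine gap, not a deferred computation. You need, for \emph{every} $x_0$, that $\xi\mapsto\pcal_e(x_0,\xi)$ has rank $n-2$ at some $\xi$, so that $K$ has nonempty interior; this is never proved (interlacing of the eigenvalues of $A-x_0\otimes x_0^*$ does not give it, and that matrix can have repeated eigenvalues when $x_0$ has vanishing coordinates), and it is strictly stronger than what is needed: since the singular values of $\pcal_e$ are only measure zero by Sard (they typically form hypersurfaces, not a discrete set), even a repaired version of your first part yields only that $K$ has empty interior, which is why you are forced into the full-rank claim. The paper instead proves the much weaker-looking but decisive fact that $\pcal_e$ is \emph{constant} on $S^*_{x_0}\ecal_A$ (Proposition \ref{SINGLEAF}, Lemma \ref{SINGLEVEL}), and then derives the contradiction either via degeneration of the confocal quadrics ($\lambda_j=\alpha_j$, forcing $x_0$ onto a coordinate sub-ellipse and then an umbilic-point computation in the $2$-dimensional slices), or via the first-variation formula \eqref{FIRSTVAR}, which shows constancy forces $A-x_0\otimes x_0$ to act as a scalar on $T^*_{x_0}\ecal_A$, giving $A$ an eigenvalue of multiplicity $\geq n-2$, impossible for $n\geq 4$ with simple eigenvalues. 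A smaller slip: $L(x_0,\xi)$ has $n-2$, not $n-1$, nonzero eigenvalues, since $0$ has multiplicity two (eigenvectors $\xi$ and the normal).
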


When  $A$ has simple eigenvalues, it has been proved  that the  eigenvalues/eigenvectors
of \eqref{LDEF} have an interpretation in terms of geometry of $n-2$ confocal quadrics to $\ecal_A$ (Chasles' theorem; see \cite{M80, K80, Au}). Namely,
the eigenvalue parameters $\lambda_j$ select $n-2$ confocal quadrics  ${\mathfrak A}_{\lambda_j}$ \eqref{QUADRICDEF}  with the property that tangent lines to geodesics are tangent to each quadric. Moreover, the
quadrics are invariant as the  covector moves  along orbits of the geodesic flow. To our knowledge,  no such interpretation is known 
for ellipsoids with multiple axes.

To prove Theorem \ref{THEODISTINCT}, we study the restricted moment map  \begin{equation} \label{PCALx0} \pcal |_{S_{x_0}^* \ecal_A} :
{S_{x_0}^* \ecal_A} \to \R^{n-2}. \end{equation} 
%edit{Notations: ${\mathfrak A}_{\lambda_j},
%Q_{\lambda_j}$}
%\edit{The assumption that $A - x \otimes x^*$ has simple eigenvalues is different from the assumption that $A$ have simple eigenvalues, but %%not
%very different: it is not hard to show (Lemma \ref{Ax0Mult} that if $A$ has distinct eigenvalues, and $x \in \ecal_A$, then the eigenvalues
% of $A - x \otimes x^*$ have multiplicity $\leq 2$.}
We first prove that at a self-focal point, none of the orbits of the Hamiltonian $\R^{n-1}$-action can be `Liouville tori' (i.e. compact,
regular orbits).

\begin{lem}\label{REGintro}  Suppose that $x_0$ is a self-focal point of a multi-axial  ellipsoid, and $(x_0, \xi) \in S^*_{x_0} M$.
Then either $(x_0, \xi)$ is a singular point of the moment map or it is a regular point with a non-compact orbit.  \end{lem}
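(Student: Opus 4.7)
The plan is to argue by contradiction: assume $(x_0,\xi)$ is a regular point of the moment map $\pcal$ whose $\R^{n-1}$-orbit is compact. By the Liouville--Arnold theorem this orbit is a Lagrangian torus $T_{d_0}\cong T^{n-1}$, and a neighborhood $U$ of $(x_0,\xi)$ in $S^*\ecal_A$ admits action--angle coordinates $(d,\theta)\in D'\times T^{n-1}$ in which every integrable Hamiltonian, and in particular $G^t$, acts by translation: $G^t(d,\theta)=(d,\theta+t\omega(d))$. Every point of $U$ is likewise regular with compact orbit, and the frequency map $\omega$ is real-analytic on $D'$.

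First I would exploit the self-focal condition $G^{2\pi}V\subseteq V$, where $V:=S_{x_0}^*\ecal_A$ has dimension $n-2$. On each Liouville torus $T_d$ meeting $V$, the intersection $\Sigma_d:=V\cap T_d$ is a closed nonempty subset of $T_d$ invariant under translation by $2\pi\omega(d)$, but contained in $V$. If $\omega(d)$ had rationally independent components the translation would be ergodic, forcing $\Sigma_d=T_d$, impossible since $\dim V<\dim T_d$; thus $\omega(d)$ satisfies a nontrivial rational relation. The Moser frequency map is non-constant on $D'$ (since $\ecal_A$ is not Zoll when $A$ has $\geq 3$ distinct axes), so the $\omega$-preimage of the countable union of rational hyperplanes in $\R^{n-1}$ has empty interior. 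Applying the previous argument to every nearby $\xi'\in V\cap U$ then forces the image $\pcal_e(V\cap U)\subseteq D'$ to have empty interior, and a careful bootstrap (propagating the resonance along the self-focal return and using the analytic structure of the Moser eigenvalues) upgrades this to $\pcal_e|_V$ having rank zero at $\xi$. Hence a neighborhood of $\xi$ in $V$ lies inside the single Liouville torus $T_{d_0}$, the flowout $\Lambda_{x_0}=\bigcup_t G^t V$ lies locally in $T_{d_0}$, and since both are $(n-1)$-dimensional smooth Lagrangians $\Lambda_{x_0}$ is open in $T_{d_0}$ there. Real-analyticity of $\pcal$ propagates $\Lambda_{x_0}\subseteq T_{d_0}$ to the whole connected set $\Lambda_{x_0}$; combining with compactness of $\Lambda_{x_0}$ and connectedness of $T_{d_0}$ gives $\Lambda_{x_0}=T_{d_0}$.

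The desired contradiction is topological. For $n\geq 4$, $\Lambda_{x_0}$ is the mapping torus of $\Phi_{x_0}:S^{n-2}\to S^{n-2}$, so its fundamental group is an extension of $\Z$ by $\pi_1(S^{n-2})=1$, giving $\pi_1(\Lambda_{x_0})=\Z$, while $\pi_1(T^{n-1})=\Z^{n-1}$. The case $n=3$ (two-dimensional tri-axial ellipsoid) is handled classically: the four umbilic points sit on singular fibers of the moment map, so $(x_0,\xi)$ is singular rather than regular, and the lemma holds automatically. I expect the main obstacle to be the bootstrap step that upgrades the ``image has empty interior'' conclusion on $\pcal_e(V\cap U)$ to genuine rank zero of $\pcal_e|_V$ at $\xi$, which appears to require careful use of the explicit eigenvalue structure of Moser's Lax pair, probably via the Chasles-type geometric interpretation of the eigenvalues as parameters of the confocal quadrics tangent to each geodesic.
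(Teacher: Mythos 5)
Your proposal shares some raw ingredients with the paper's argument (action--angle coordinates near a putative regular compact orbit, a resonance forced by the self-focal structure, and the topological distinction between the mapping torus $\Lambda_{x_0}$ and a Liouville torus $T^{n-1}$), but the two load-bearing steps are genuinely unsupported. First, the claim that ``the Moser frequency map is non-constant on $D'$ (since $\ecal_A$ is not Zoll), so the $\omega$-preimage of the countable union of rational hyperplanes has empty interior'' is false as stated: a non-constant frequency map can perfectly well satisfy a single fixed resonance identically, in which case the preimage is everything. What is actually needed here is a quantitative non-degeneracy of the frequency map, and supplying it is the real analytic content of the paper: Kn\"orrer's Kolmogorov non-degeneracy of the multi-axial ellipsoid, upgraded via homogeneity (Proposition \ref{MAINNDintro}) to iso-energetic non-degeneracy on a dense open set of $S^*\ecal_A$. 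Moreover, the resonance you extract is the wrong kind: invariance of the proper closed set $V\cap T_d$ under the time-$2\pi$ translation only yields an \emph{affine} relation $\langle k,\omega(d)\rangle\in\Z$ (and your ergodicity criterion should involve rational independence of $1,\omega_1,\dots,\omega_{n-1}$, not of the $\omega_j$ alone); affine resonances on an open set are not excluded even by iso-energetic non-degeneracy. The paper avoids this by looking at the continuous-time geodesic orbit, which lies in $\Lambda_{x_0}\cap T$; since $\Lambda_{x_0}\simeq S^1\times S^{n-2}$ and $T\simeq T^{n-1}$ have different topology this intersection has empty interior in $T$, the orbit is not dense, and one gets a \emph{homogeneous} resonance $\langle k,\omega\rangle=0$, with a fixed $k$ on a whole neighborhood by total disconnectedness of $\Z^{n-1}$ -- exactly what contradicts iso-energetic non-degeneracy.

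Second, the ``bootstrap'' you flag yourself is a non sequitur, not a technical detail: knowing that $\pcal_e(V\cap U)$ has empty interior (equivalently, that $\pcal_e|_V$ has rank $<n-2$ on a dense set) does not give rank zero at $\xi$, nor that a neighborhood of $\xi$ in $V$ lies in a single Liouville torus; $V$ could still meet a continuum of distinct tori. In the paper the statement that $S^*_{x_0}\ecal_A$ sits in a single level set (Proposition \ref{SINGLEAF}) is a \emph{consequence} of the present lemma together with density of compact regular leaves, not an input to it, so your route effectively assumes downstream structure. The concluding topological step ($\pi_1$ of the mapping torus is $\Z$ versus $\pi_1(T^{n-1})=\Z^{n-1}$) is fine for $n\geq 4$ and is morally the same topological fact the paper uses, just deployed later; but your $n=3$ disposal is also wrong in its justification: at an umbilic point of the two-dimensional tri-axial ellipsoid the fiber $S^*_{x_0}$ contains regular points (the two non-compact cylinders), not only singular ones -- the lemma holds there because those regular orbits are non-compact, which is the alternative your argument never engages.
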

An initial  proof is given in  Proposition \ref{REG} and \ref{PSFSING} when $A$ has distinct
eigenvalues.

Both of the alternatives in the statement may occur.  For instance, in    the case of an  umbilic point  $x_0$ for a 2-dimensional trix-axial ellipsoid,
 $S^*_{x_0} \ecal_A$ consists    of four  orbits  (two non-compact cylinders and two circles, corresponding to the smoothly 
closed geodesics). The moment map is singular along the closed geodesics but is regular along the cylinders. 
The  first return map $\Phi_{x_0}$ at the umbilic point is a diffeomorphism of $S^1$ with  two fixed points, and is a north-pole-
south-pole map taking all other points to the north pole. 

The proof  of Lemma \ref{REGintro} is in part  an application of a theorem due to Schutz \cite{Sch72}  and Klingenberg \cite{Kl95}.  Their results use that the
geodesic flow of a multi-axial ellipsoid is {\it Kolmogorov non-degenerate} (see Definition \ref{KNDDEF}). However,  to prove Lemma
\ref{REGintro}, we need 
the somewhat different non-degeneracy condition that the geodesic flow is {\it iso-energetically non-degenerate} (Definition \ref{KNDDEF}). 
In general, these two conditions are independent. However, we prove the following general result, which allows us to infer a sufficient
degree of iso-energetic non-degeneracy to prove Lemma \ref{REGintro}.

\begin{prop}\label{MAINNDintro}  Suppose that $(M,g, H)$ is a Kolmogorov non-degenerate integrable system with Hamiltonian $H(x, \xi)$ that is homogeneous
of degree $2$ in $\xi$. Then, there exists an open dense set in $S^*M = \{H=1\}$ on which the system is iso-energetically non-degenerate. \end{prop}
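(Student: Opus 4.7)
The plan is to introduce action--angle coordinates on the open dense regular set of the moment map, exploit the $2$-homogeneity of $H$ in $\xi$ via Euler's identity, and then reduce the bordered Hessian that characterizes iso-energetic non-degeneracy to the ordinary Hessian that characterizes Kolmogorov non-degeneracy.

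First I would fix a component of the Liouville regular set and invoke Arnold--Liouville to obtain action--angle coordinates $(I,\theta)$ in which $H = H(I)$. Because $I_j = \frac{1}{2\pi}\oint_{\gamma_j}\xi\,dx$ and the radial rescaling $\sigma_s:(x,\xi)\mapsto(x,s\xi)$ carries each Liouville torus $T_I$ diffeomorphically to $T_{sI}$ (inducing the identity on $H_1$ under compatibly labeled homology bases, so that each period integral simply picks up a factor $s$), the actions $I_j$ are homogeneous of degree $1$ in $\xi$. Combined with $H(x,s\xi) = s^2 H(x,\xi)$, this forces $H(I)$ to be homogeneous of degree $2$ in $I$. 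Writing $\omega := \nabla_I H$ and $W := \Hess_I H$, Euler's identity applied to $H$ and to each $\partial_i H$ then yields the two key relations
\[
WI = \omega, \qquad I\cdot\omega = 2H.
\]

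Next I would compute the bordered Hessian directly. On the Kolmogorov-non-degenerate locus, $W$ is invertible, and the Schur-complement formula together with the identities above gives
\[
\det\begin{pmatrix} W & \omega \\ \omega^{T} & 0 \end{pmatrix}
= -\det(W)\,\omega^{T}W^{-1}\omega
= -\det(W)\,I^{T}\omega
= -2H\det(W),
\]
where $W^{-1}\omega = I$ was used. Restricted to $\{H = 1\}$ this becomes $-2\det(W)$, which is non-zero precisely where $W$ is invertible. Thus iso-energetic non-degeneracy holds throughout the intersection of $S^*M$ with the regular Kolmogorov-non-degenerate locus, an open dense subset of $S^*M$.

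The main subtle step, and the one I would justify with care, is the $1$-homogeneity of the action variables: one must verify that $\sigma_s$ sends Liouville tori to Liouville tori in the specified way and acts as the identity on integer homology, so that the defining period integrals really do transform by a common scalar. Once this geometric fact is secured, the proposition reduces to the two-line determinant computation above, with the precise cancellation between $\omega^{T}W^{-1}\omega$ and $2H$ being the direct manifestation of $2$-homogeneity in action coordinates.
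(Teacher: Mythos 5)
Your proof is correct, and its final step is genuinely different from the one in the paper. Both arguments start from the same homogeneity input: the Lagrangian torus fibration is dilation-invariant, the actions scale linearly under $(x,\xi)\mapsto(x,\tau\xi)$, hence $H=F(I)$ is homogeneous of degree $2$ in $I$ and $\omega=\nabla_I F$ of degree $1$ (these are exactly the paper's preparatory lemmas, and you rightly flag the torus-invariance under dilation as the point needing care). After that the routes diverge. The paper reinterprets iso-energetic non-degeneracy as transversality of the rays $\R_+\omega$ to the image hypersurface $\omega_I(S^*M)$ and argues by contradiction: if the condition failed on an open set then (using real analyticity) $\omega_I(S^*M)$ would be a cone over a link, so dilation would add no new direction to $\omega_I(T^*M)$, contradicting Kolmogorov non-degeneracy. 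You instead make a pointwise linear-algebra computation: Euler's identity for the degree-$2$ function $F$ and for its degree-$1$ partials gives $WI=\omega$ and $I\cdot\omega=2H$ with $W=\Hess_I F$, so the bordered determinant in Definition \ref{KNDDEF} equals $-\det(W)\,\omega^{T}W^{-1}\omega=-2H\det(W)$, which on $\{H=1\}$ is nonzero exactly where $\det W\neq 0$. What your version buys: it is more elementary, needs no real-analyticity or open-dense contradiction argument, and yields a sharp pointwise identity (iso-energetic determinant $=-2\det W$ on the unit level) rather than only density; it also supplies precisely the ingredient missing from the Schur-complement step in Proposition \ref{ISOENDPROP}, where $\langle W^{-1}\nabla_I H,\nabla_I H\rangle\neq 0$ does not follow from $\nabla_I H\neq 0$ alone (as $W^{-1}$ need not be definite) but does follow from homogeneity via $\omega^{T}W^{-1}\omega=2H$. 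What the paper's route buys is the geometric picture of the frequency map as a Gauss-type map and a formulation that does not pass through the bordered determinant at all. The only caveat in your write-up is bookkeeping: the open dense set should be taken inside the regular compact locus (where action-angle variables exist, Lemma \ref{OD}) intersected with the Kolmogorov-non-degenerate set and $\{H=1\}$; dilation-invariance of the foliation ensures this intersection is still dense in $S^*M$.
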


The next step is to embed  $(S^*_{x_0} M, \Phi_{x_0})$ as an invariant Lagrangian manifold for  the nonlinear Poincar\'e map of a local symplectic
transversal  $ S \subset S^*M$ to the geodesic flow in $S^*M$.  Natural symplectic transversals are provided by the sets $S^*_H \ecal_A$ of unit covectors to $\ecal_A$ with
footpoint on a hypersurface  $H$.  To qualify as a transversal, the Hamilton vector field $H_{|\xi|_g^2}$ of the co-metric norm function $|\xi|_g^2 = \sum_{i,j} g^{ij}(x) \xi_i \xi_j$
on $T^*\ecal_A$ must be transverse to $S^*_H \ecal_A$. In terms of the geometry of the ellipsoid, geodesics which intersect $H$ can only have order of
contact $\leq 1$. Hence we may let $H \subset \ecal_A$ be a (piece of open) hypersurface through $x_0$ with non-degenerate second fundamental form.
For such a local transversal there exists a partially-defined first return map $\fcal: S \to S$ defined at points where the time of return to $S$ is finite. We only
use $\fcal$ in arbitrarily small neighborhoods of $S^*_{x_0} \ecal_A$ and, for any $n \in \Z$,  there exist neighborhoods for which the first $n$ return times
are finite.

Having chosen $H$, we consider the nonlinear Poincar\'e map,
\begin{equation} \label{PMAPDEF} \fcal: S^*_H \ecal_A \to S^*_H \ecal_A, \end{equation}
where it is understood that the map is only defined on the domain where the first return time is finite. Evidently, $S_{x_0}^*\ecal_A \subset S^*_H \ecal_A$ is an isotropic submanifold of a symplectic manifold of half the dimension $2 (n-1) - 2$, hence 
it is a Lagrangian submanifold.  Moreover, $S_{x_0}^*\ecal_A$ is an invariant
set for $\fcal$ and, 
$$\fcal | _{S^*_{x_0} \ecal_A} =  \Phi_{x_0}: S_{x_0}^*\ecal_A \to S_{x_0}^*\ecal_A. $$

We now intersect the (singular) foliation of $S^*\ecal_A$ by the $\R^{n-1}$-orbits of the Hamiltonian action with the transversal $S^*_H \ecal_A$ to obtain a (singular) foliation of
$S^*_H \ecal_A$ with leaves of generic dimension $n-2$, which we call the {\it reduced} foliation.

\begin{prop} \label{SINGLEAF} If $\ecal_A$ is multi-axial, then   $S^*_{x_0} \ecal_A$ lies on a single level set of the eigenvalue moment map \eqref{PCALx0}, and   is a leaf of the reduced foliation.  Moreover, $\Lambda_{x_0}$ is the closure of a regular
orbit. 
 \end{prop}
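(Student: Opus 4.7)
The plan is to prove the three assertions in order, with the crux being constancy of the moment map on $S^*_{x_0}\ecal_A$.

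First I would note that the flowout $\Lambda_{x_0}$ is a compact connected submanifold of $S^*\ecal_A$ of dimension $n-1$, diffeomorphic as a mapping torus to $S^1\times_{\Phi_{x_0}}S^{n-2}$, and Lagrangian in $T^*\ecal_A$ since it is the Hamiltonian flowout of the Lagrangian fiber $T^*_{x_0}\ecal_A$. Because each component of $\pcal=(|\xi|_g^2,\pcal_e)$ Poisson-commutes with $H_1=|\xi|_g^2$, the moment map is $G^t$-invariant, and hence $\pcal(\Lambda_{x_0})=\pcal(S^*_{x_0}\ecal_A)$; it therefore suffices to show $\pcal$ is constant on $\Lambda_{x_0}$.

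I would proceed by cases according to Lemma \ref{REGintro}. Suppose first that some point $p=(x_0,\xi_0)\in S^*_{x_0}\ecal_A$ is regular for $\pcal$; by Proposition \ref{MAINNDintro}, I may further assume $p$ is iso-energetically non-degenerate. Liouville-Arnold then supplies action-angle coordinates $(I,\theta)\in U\times\T^{n-1}$ near $p$ in which the Liouville torus $T_p=\{I=I(p)\}$ carries the geodesic flow as $\theta\mapsto\theta+t\,\omega(I)$, with iso-energetic non-degeneracy guaranteeing that $\omega$ varies non-trivially with $I$ on the energy surface. Since $\Lambda_{x_0}$ is a compact $G^t$-invariant Lagrangian through $p$ and self-focality forces every geodesic from $S^*_{x_0}\ecal_A$ to return to $S^*_{x_0}\ecal_A$ at time $2\pi$, the rigid constancy of this return time on $\Lambda_{x_0}$, combined with the non-degenerate dependence of $\omega$ on $I$, forces $I$ to be constant along the image of $S^*_{x_0}\ecal_A$ in the chart. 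Thus $\Lambda_{x_0}$ coincides locally with $T_p$, and by continuity of $\pcal$ and connectedness of $\Lambda_{x_0}$ the constancy propagates globally. If instead every point of $S^*_{x_0}\ecal_A$ is singular for $\pcal$, I would invoke the $G^t$-invariance of $\pcal$ together with connectedness of the relevant singular stratum to deduce the same conclusion.

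Given constancy of $\pcal_e$ on $S^*_{x_0}\ecal_A$, the two remaining claims follow by dimension and dynamics. The leaves of the reduced foliation on the transversal $S^*_H\ecal_A$ have generic dimension $n-2$, matching $\dim S^*_{x_0}\ecal_A$; since $S^*_{x_0}\ecal_A\subset S^*_H\ecal_A$ is connected and sits in a single level set of $\pcal_e$, it equals a single leaf. For the closure claim, I would pick a regular $p\in S^*_{x_0}\ecal_A$ supplied by Lemma \ref{REGintro}; its $\R^{n-1}$-orbit $\mathcal{O}_p$ is non-compact, and its closure $\overline{\mathcal{O}_p}\subset\pcal^{-1}(c)$ is a compact connected $(n-1)$-dimensional $\R^{n-1}$-invariant subset containing the $G^t$-orbit of $p$. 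The $G^t$-invariance, compactness, and matching dimension of $\Lambda_{x_0}$ then force $\overline{\mathcal{O}_p}=\Lambda_{x_0}$.

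The hard part will be the rigidity step: showing that a compact $G^t$-invariant Lagrangian on which the return time to $S^*_{x_0}\ecal_A$ equals $2\pi$ must locally coincide with a Liouville torus at an iso-energetically non-degenerate regular point. The crucial input is Proposition \ref{MAINNDintro}, which prevents resonant coexistence of nearby Liouville tori sharing the same $2\pi$ return time, but translating this into a precise local constraint on $I$ in the action-angle coordinates — in particular controlling how the embedded $(n-2)$-sphere $S^*_{x_0}\ecal_A$ intersects an action-angle chart — is where most of the analytic work lies.
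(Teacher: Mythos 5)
Your proposal takes a genuinely different route from the paper, but as written it has real gaps at exactly the points that carry the content. The paper argues in the opposite order: it works on the transversal $S^*_H\ecal_A$, notes that the regular compact leaves of the reduced foliation are open and dense there, and uses Proposition \ref{REG} (no point of $S^*_{x_0}\ecal_A$ lies on a compact regular torus) to conclude that these leaves are disjoint from $S^*_{x_0}\ecal_A$; hence $S^*_{x_0}\ecal_A$ must itself be a (singular) leaf, and constancy of the eigenvalue moment map is then immediate. You instead try to prove constancy of $\pcal$ on $\Lambda_{x_0}$ first, by an action--angle rigidity argument, and that argument does not go through. In your ``regular'' case you invoke Liouville--Arnold to produce a chart $U\times\mathbb{T}^{n-1}$ and a Liouville torus $T_p$ through a regular point $p\in S^*_{x_0}\ecal_A$; but Lemma \ref{REGintro} says precisely that a regular point of $S^*_{x_0}\ecal_A$ has a \emph{non-compact} orbit, so there is no Liouville torus through $p$, and the normal form near $p$ is the one of Theorem \ref{LATH} with $s\geq 1$, where the frequency-map and iso-energetic machinery you appeal to is not available in the form you use it. (Also, ``$\Lambda_{x_0}$ coincides locally with $T_p$'' is in tension with the topology $\Lambda_{x_0}\simeq$ mapping torus of $S^{n-2}$, which is exactly how the paper rules such coincidences out.) You yourself flag the ``rigidity step'' as the hard part and leave it unproven, but that step is the entire content of the statement in this case.

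The remaining steps also need repair. In the all-singular case, $G^t$-invariance of $\pcal$ plus connectedness of the singular stratum does not imply $\pcal$ is constant on $S^*_{x_0}\ecal_A$: invariance only says $\pcal$ is constant along each orbit, and nothing you cite relates the values on the orbits through different $\xi\in S^*_{x_0}\ecal_A$; supplying that link is what the paper's density/disjointness argument on the transversal does. Next, ``lies in a single level set'' does not by itself yield ``is a leaf of the reduced foliation'': a level set may contain several orbits (cf. \eqref{CI1}), so leaf-ness needs a separate argument of the kind the paper gives. Finally, for the ``Moreover'' clause, your identification $\overline{\mathcal{O}_p}=\Lambda_{x_0}$ is asserted from compactness and a dimension count, but neither inclusion is established: $\mathcal{O}_p$ is the full $\R^{n-1}$-orbit, not merely the $G^t$-orbit of $p$, so $\mathcal{O}_p\subset\Lambda_{x_0}$ is not automatic and must be derived (for instance from the fact that $S^*_{x_0}\ecal_A$ is a single leaf, i.e. after the first two claims are already in hand).
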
 

As is well-known, and reviewed in Section \ref{MOSERSECT}, the eigenvalues $\lambda_j$ of the moment map at a point $(x, \xi)$ specify confocal quadrics $Q_{\lambda_j}$ 
to $\ecal_A$ with the property that 
the tangent lines of  the geodesic $\gamma_{x, \xi}(t) $ with initial data $(x, \xi)$ are tangent to $Q_{\lambda_j}$. These quadrics degenerate when, and only when, $\lambda_j$
is an eigenvalue of $A$.

\begin{prop} \label{DEGPROP}  The constant value $\vec \lambda$ of  \eqref{PCALx0}  is a value $\vec \lambda$ for which all the quadrics ${\mathfrak A}_{\lambda_j}$ are degenerate. Such values are $\lambda_j = a_j$.

\end{prop}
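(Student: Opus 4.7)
By Proposition \ref{SINGLEAF}, the eigenvalue moment map $\pcal_e$ takes a single constant value $\vec\lambda=(\lambda_1,\dots,\lambda_{n-2})$ on the fiber $S^*_{x_0}\ecal_A$. The goal is to show that every component $\lambda_j$ coincides with one of the axis eigenvalues $\alpha_k$ of $A$, i.e.\ that the confocal quadric $\mathfrak{A}_{\lambda_j}$ degenerates to a coordinate hyperplane. The plan is to transport the constancy statement through the Chasles/Moser interpretation of the eigenvalues of the Lax matrix $L(x,\xi)$ reviewed in Section \ref{MOSERSECT}: $\lambda$ is an eigenvalue of $L(x_0,\xi)$ precisely when the affine line $\{x_0+t\xi:t\in\R\}$ is tangent to $\mathfrak{A}_\lambda$. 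Constancy of $\lambda_j$ on the whole unit sphere $S^{n-2}=S^*_{x_0}\ecal_A$ then means that every line issuing from $x_0$ in a tangent direction to $\ecal_A$ is tangent to the same quadric $\mathfrak{A}_{\lambda_j}$.

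The bulk of the argument is to rule out non-degenerate $\mathfrak{A}_{\lambda_j}$. When $\lambda_j\notin\Spec(A)$, the tangency condition from the fixed point $x_0\in\ecal_A$ is the vanishing of a homogeneous quadratic form
\begin{equation*}
Q_{\lambda_j}(\xi) \;=\; \langle(A-\lambda_j I)^{-1}x_0,\xi\rangle^2 \;-\; c_{\lambda_j}\,\langle(A-\lambda_j I)^{-1}\xi,\xi\rangle, \qquad c_{\lambda_j} := \lambda_j\bigl\langle A^{-1}(A-\lambda_j I)^{-1}x_0,x_0\bigr\rangle,
\end{equation*}
in the direction variable $\xi\in T_{x_0}\ecal_A$. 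Since the zero set of a nonzero homogeneous quadratic form is a proper cone in $T_{x_0}\ecal_A$, it cannot contain the full unit sphere $S^{n-2}$; hence $Q_{\lambda_j}$ must vanish identically on $T_{x_0}\ecal_A$.

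I then contradict the identity $Q_{\lambda_j}|_{T_{x_0}\ecal_A}\equiv 0$ by a rank comparison. When $\lambda_j\notin\Spec(A)$ the matrix $(A-\lambda_j I)^{-1}$ is invertible, so its restriction to the hyperplane $T_{x_0}\ecal_A$ is a symmetric bilinear form of rank at least $n-2$, which is $\geq 2$ for ellipsoids of dimension $n-1\geq 3$. The first summand $\xi\mapsto\langle(A-\lambda_j I)^{-1}x_0,\xi\rangle^2$ is of rank at most $1$. Provided $c_{\lambda_j}\neq 0$, the two summands cannot cancel as forms on $T_{x_0}\ecal_A$, yielding the contradiction, so $\lambda_j\in\Spec(A)$ and $\mathfrak{A}_{\lambda_j}$ is degenerate.

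The main obstacle I expect is the borderline case $c_{\lambda_j}=0$, in which the rank comparison collapses. Geometrically, $c_{\lambda_j}=0$ says that $(A-\lambda_j I)^{-1}x_0$ is Euclidean-normal to $\ecal_A$ at $x_0$, i.e.\ parallel to $A^{-1}x_0$. Because $A$ is diagonal with simple spectrum under the multi-axial hypothesis, this forces all but one coordinate of $x_0$ to vanish, placing $x_0$ at a coordinate vertex of $\ecal_A$. A direct inspection of $L(x_0,\xi)$ at such a vertex shows that one of its invariants is forced to be an $\alpha_k$, so again $\lambda_j\in\Spec(A)$, completing the argument.
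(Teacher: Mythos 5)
Your main case is sound and takes a genuinely different route from the paper's own proof: the paper argues synthetically (if the $\lambda_j$ were not in the spectrum of $A$, all lines from $x_0$ in directions of $S^*_{x_0}\ecal_A$ would be tangent to the same non-degenerate quadric, and since these lines span $T_{x_0}\ecal_A$ the whole tangent hyperplane would have to be tangent to it, which is impossible), whereas you make the tangency condition explicit through Moser's identity \eqref{IDENTITY} and run a rank count on the resulting quadratic form in $\xi$. Your form is indeed $-\Phi_{\lambda_j}(x_0,\cdot)$ once one uses $\langle A^{-1}x_0,x_0\rangle=1$ and the resolvent identity $(A-\lambda)^{-1}-A^{-1}=\lambda A^{-1}(A-\lambda)^{-1}$, and the rank comparison in the case $c_{\lambda_j}\neq 0$ is correct for $n\geq 4$. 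One small point you should make explicit: you need $\lambda_j\neq 0$ (Corollary \ref{Mult2LEM} and Lemma \ref{Only2LEM}), since for $\lambda=0$ one has $c_0=0$ and the form $\langle A^{-1}x_0,\xi\rangle^2$ genuinely does vanish on $T_{x_0}\ecal_A$, so the argument would give nothing.

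The gap is in your borderline case $c_{\lambda_j}=0$, which as written does not work. Since $c_{\lambda_j}=\lambda_j\langle (A-\lambda_j I)^{-1}x_0,\,A^{-1}x_0\rangle$ with $\lambda_j\neq0$, the condition $c_{\lambda_j}=0$ says that $(A-\lambda_j I)^{-1}x_0$ is \emph{orthogonal} to the normal $A^{-1}x_0$, i.e.\ tangent to $\ecal_A$ at $x_0$ --- the opposite of your reading. Consequently the deduction that all but one coordinate of $x_0$ vanish does not follow from $c_{\lambda_j}=0$, and the concluding claim that a ``direct inspection of $L(x_0,\xi)$ at such a vertex'' forces one of its invariants to equal some $\alpha_k$ is unjustified and false in general: at a coordinate vertex the nontrivial eigenvalues of $L(x_0,\xi)$ are those of the compression of $A-x_0\otimes x_0$ to $\{{\bf n},\xi\}^{\perp}$, which interlace the $\alpha_k$ and vary with $\xi$ rather than equalling eigenvalues of $A$. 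The case is, however, easy to close inside your own framework: if $c_{\lambda_j}=0$ and $Q_{\lambda_j}\equiv 0$ on $T_{x_0}\ecal_A$, then the rank-one term $\langle (A-\lambda_j I)^{-1}x_0,\xi\rangle^2$ vanishes identically there, so $(A-\lambda_j I)^{-1}x_0$ is parallel to $A^{-1}x_0$; combined with $c_{\lambda_j}=0$ it is also orthogonal to $A^{-1}x_0$, hence $(A-\lambda_j I)^{-1}x_0=0$, contradicting $x_0\neq 0$. With that repair the borderline case cannot occur, and your argument proves the proposition.
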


We then complete the proof of Theorem \ref{THEODISTINCT} in two different ways. One is to use the
 $n-2$ confocal quadrics associated to each point $(x_0, \xi) \in S^*_{x_0} \ecal_A$ together with the fact that 
\eqref{PCALx0} is a constant map. In this case,  the lines through every $(x_0, \xi) \in S^*_{x_0} \ecal_A$ must be tangent to the same $n-2$ quadrics. Yet, these
lines span the tangent plane $T_{x_0} \ecal_A$, which would therefore have to be tangent to the quadrics. Hence, $T_{x_0} \ecal_A$ would have to
be the tangent space to each ${\mathfrak A}_{\lambda_j}$ at the point of contact  of each line $x_0 + s \xi, \xi \in S^*_{x_0} \ecal_A$ with ${\mathfrak A}_{\lambda_j}$. This gives a contradiction. 
The second of these proofs again exploits the fact that \ref{PCALx0} is a constant map. By computing formulae for the variation of eigenvalues along smooth curves in $S^*_{x_0}$, we deduce that $A-x_0\otimes x_0$ must restrict to a scalar operator on the subspace $T^*_{x_0}\SE_A$, and so must have an eigenvalue of multiplicity at least $n-1$. It follows that $A$ must have an eigenvalue of multiplicity at least $n-2$, which is a contradiction for $n>3$.

Theorem \ref{THEO4DISTINCT} is proved in Section \ref{4AXES}. 
The main idea is to use the symmetries to  reduce the statement to the case of multi-axial ellipsoids.
That is, we use the isometry group of $\ecal_A$ to move any putative
self-focal point around, and in particular to move it to a coordinate slice. It then becomes a self-focal points for a lower-dimensional multi-axial ellipsoid of dimension $\geq 3$. We deploy Theorem \ref{THEODISTINCT} to obtain a contradiction. 

Proposition \ref{3AXESPROP} is proved in Section \ref{3AXESSECT}.  Further observations on tri-axial ellipsoids in higher dimensional
ellipsoids are recorded there as well. For instance, to disprove the existence of self-focal points on tri-axial ellipsoids of general dimension with certain multiplicity
configurations, it would suffice to disprove the existence on four-dimensional ellipsoids with multiplicities $(1, 2, 1)$. The geodesic
flow of such ellipsoids reduces to the so-called Rosochatius Hamiltonian system, i.e. geodesic motion on a two-dimensional ellipsoid  in an inverse square potential.  Although
this Hamiltonian system is known to be completely integrable, with a Lax pair, it has not been analyzed very thoroughly and we leave it
as an open problem whether Rosochatius Hamiltonian systems have self-focal points. They would necessarily occur at the umbilic points
of the reduced two-dimensional ellipsoid.

  \subsection{\label{RELATED} Related problems}
  
 Apart
  from the case of elllipsoids, almost nothing seems to be known about 
  the existence of self-focal points on manifolds of dimension $\geq 3$, or on the existence of  $G^t$-invariant Lagrangian submanifolds such as \eqref{FLDEF} in $S^*M$.  Although our methods use special facts about ellipsoids, in particular their complete integrability by the method
  of Lax and Moser, it is hoped that experience with ellipsoids will shed light on the general case.

Let us provide some more background to Corollary \ref{EFCOR}. 
   As mentioned there, part of the  motivation to study self-focal points is their  important in the study of $L^{\infty}$ norms of eigenfunctions.
    It is conjectured that   the universal bounds
can only be achieved at polar self-focal points. The conjecture is proved in \cite{SZ16} for real analytic surfaces but remains open for general smooth
surfaces and in higher dimensions.  The eigenfunction analysis only goes so far, and then we are left with a geometric problem. We do not know of {\it any}   examples  of compact Riemannian manifolds of dimension $\geq 3$ possessing non-polar self-focal points $x$, i.e. for which the looping geodesics are {\it not} closed, i.e. for which $\Phi_x \not= Id$. Thus it is possible that geometric analysis of self-focal points  can   simplify the analysis of eigenfunctions by constraining the types of Riemannian manifolds possessing them.

The small-oh estimate of Theorem \ref{EFCOR}  can probably be improved to a logarithmic improvement of the form, $||\phi_j||_{L^{\infty}}  = O(\frac{\lambda_j^{\frac{n-1}{2}}}{\sqrt{\log \lambda_j}})$. It appears that the machinery of \cite{CG19}
together with the geometric results in this article are sufficient to prove this logarithmic improvement. See also \cite{GT20} for the
estimate on a two-dimensional tri-axial ellipsoid as well as references to the fact that the estimate is sharp.
  In fact, it seems reasonable to ask if there exist power law improvements. 
  
  \subsection{Acknowledgements} We would like to thank Peter Topalov for his comments on iso-energetic non-degeneracy
  and its relations to Kolmogorov non-degeneracy

\section{\label{MMSECT} Integrable systems and moment maps }
In this section, we review basic concepts of integrable systems: (i) regular compact and non-compact points, (ii) singular points, (iii) frequency map and 
action-angle variables; (iv) resonant frequencies. All of these notions are used in the proofs of the main results. We also need the less-standard result that a non-compact orbit can only occur on a singular level set of the moment map. Since we were unable to find it in the literature, we given a proof in Lemma \ref{NONCPTPROP}.

The geodesic flow $G^t$ of a Riemannian manifold $(M,g)$ of dimension $d$ is the Hamiltonian flow of the metric norm function $H(x, \xi) = |\xi|_g^2$ on $T^* M$.
the Hamiltonian vector field of a smooth function $p$ is denoted $H_{p}(x, \xi) = \sum_{j= 1}^d (\frac{\partial p}{\partial \xi_j} \frac{\partial}{\partial x_j} - 
\frac{\partial p}{\partial x_j} \frac{\partial}{\partial \xi_j} )$, and its flow is denoted by $\Phi_t = \exp t H_p$.
  The geodesic flow of an ellipsoid $\ecal$ is completely integrable, i.e. there
exists a  Hamiltonian $\R^n$ action on $T^* \ecal$
commuting with the geodesic flow. In this section, we review definitions relevant to general  completely integrable systems. In the next section,
we review Moser's approach to the complete integrability of the geodesic flow of an ellipsoid. We then study the moment map and its singular
points for Moser's moment map.

  A completely integrable system in dimension $d$ is defined by an abelian
  subalgebra
\begin{equation}
\label{p} {\mathfrak p} =  \R  \{p_1, \dots, p_d\}\;\subset \;
(C^{\infty} (T^*M - 0), \{ , \}).
\end{equation} Here,  $\{, \}$ is the standard Poisson
bracket. We assemble the generators into the  moment map
\begin{equation} \label{MM} {\mathcal P} = (p_1, \dots, p_d): T^*M \rightarrow
B \subset\R^d. \end{equation}  
We always assume that  moment maps ${\mathcal P}$ are proper.  The Hamiltonians $p_j$ generate the
 $\R^d$-action
$$ \Phi_t = \exp t_1 H_{p_1} \circ \exp t_2 H_{p_2} \dots \circ \exp t_d
H_{p_d}.$$ We  denote  $\Phi_t$-orbits by   $\R^d \cdot (x, \xi)
$.

\begin{defn}\label{SING} We say: 
\bigskip

\begin{itemize}
\item A point $(x, \xi)$ is called a  regular point of ${\mathcal
P}$ if  $dp_{1} \wedge \cdot \cdot \cdot \wedge dp_{d}(x,\xi) \not=
0$. The (open) set of regular points in $S^*M$ is denoted by $S^*M_{\rm{reg}}$. \bigskip

\item A regular point is called `compact' if the orbit $\R^n \cdot (x, \xi)$ is compact. Otherwise it is non-compact.  The set of compact regular points, resp. non-compact
regular points,  in $S^*M$ is denoted by $S^*M_{\rm{reg},c}$ resp.  $S^*M_{\rm{reg},nc}$. \bigskip

\item A point $(x, \xi)$ is called a singular point of ${\mathcal
P}$ if  $dp_{1} \wedge \cdot \cdot \cdot \wedge dp_{d}(x,\xi) =
0$.%\item A connected component of ${\mathcal P}^{-1}(c)$ is a singular component
% if it  contains a singular point.
 An orbit $\R^n \cdot (x, \xi) $ of $\Phi_{t}$ is  singular if it is non-Lagrangean, i.e.   has dimension $<n$;
A level set  ${\mathcal P}^{-1}(c)$ of the moment map is called  a singular level if
it contains a singular point $(x, \xi) \in {\mathcal P}^{-1}(c)$.    (We then say $c$ is a singular value and write  $c \in B_{sing}$.)

\end{itemize}

\end{defn}
The orbits of the $\R^d$-action commuting with $G^t$ define a singular foliation of $S^* M$. 
We refer to \cite{El,El2, Zung, Zung96} for background on this foliation and its singularities. 
If  $b$ is  a regular value, we refer to  \begin{equation} \label{CI1}
{\mathcal P}^{-1}(b) = \Lambda^{(1)}(b) \cup \cdot \cdot \cdot
\cup
 \Lambda^{(m_{cl})}(b) , \;\;\;(b \in B_{reg})
\end{equation}
\noindent as a regular level set.   Here,    $m_{cl} (b) = \#
{\mathcal P}^{-1} (b)$ is  the number of  orbits on the level set
${\mathcal P}^{-1}(b)$; it is finite if $\pcal$ is real analytic. The Liouville-Arnold theorem (loc.cit.) states the following: \begin{theo}\label{LATH} Let $\pcal^{-1}(c)$ be a regular level
set of the moment map and let $X$ be a connected component. Then there exists  a neighborhood $D$ of $X$ in which the Hamiltonian $H(q,p)$ is symplectically equivalent to  
a Hamiltonian  $$H(x, y, I), \;\;\; (x, y, \theta, I) \in U \subset T^*\R^s \times T^* \T^{n-s} $$
which does not depend on $\theta$. The symplectic structure is $\sum_{j=1}^s dx^i \wedge d y^i
+ \sum_{i-1}^{n-s} d \theta_i \wedge d I_i$. \end{theo}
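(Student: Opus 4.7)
The plan is to prove this generalized Liouville--Arnold theorem by first identifying the diffeomorphism type of $X$ as a homogeneous space $\R^s \times \T^{n-s}$, then constructing Arnold's action--angle variables on the compact directions together with Darboux coordinates on the non-compact directions, and finally verifying that $H$ is independent of the angle variables.

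\textbf{Topology of the leaf.} Since $c$ is a regular value of $\pcal$ and the components pairwise Poisson-commute, the Hamiltonian vector fields $H_{p_1}, \ldots, H_{p_n}$ are linearly independent and commuting on $X$, so they generate a locally free Hamiltonian $\R^n$-action $\Phi_t$ that acts transitively on the connected component $X$. Hence $X \cong \R^n/\Gamma$, where $\Gamma$ is the (discrete) isotropy subgroup at a chosen base point $x_0 \in X$. By the structure theorem for discrete subgroups of $\R^n$, $\Gamma = \bigoplus_{k=1}^{n-s} \Z e_k$ for $\R$-linearly independent vectors $e_k$, so $X \cong \R^s \times \T^{n-s}$. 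The implicit function theorem applied to the equation $\Phi_e(x_0(c')) = x_0(c')$ gives a smooth family of lattice bases $e_k(c')$ on a neighborhood of $c$ in $B_{\rm reg}$.

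\textbf{Canonical coordinates.} Following Arnold's construction, define $n-s$ action variables
$$
I_k(c') \;:=\; \frac{1}{2\pi} \oint_{\gamma_k(c')} \alpha, \qquad k = 1, \ldots, n-s,
$$
where $\alpha$ is the canonical Liouville $1$-form on $T^*M$ and $\gamma_k(c')$ is the cycle $t \mapsto \Phi_{t e_k(c')}\sigma(c')$ for a smooth local section $\sigma$ of $\pcal$. The Lagrangian property of $\pcal^{-1}(c')$ (immediate from Poisson commutativity) combined with Stokes's theorem makes $I_k$ well defined up to a locally constant integer, hence smooth in $c'$. Conjugate angles $\theta_1, \ldots, \theta_{n-s}$ are built by reparametrizing the commuting torus-direction flows so that $\{\theta_k, I_\ell\} = \delta_{k\ell}$ and $\{I_k, I_\ell\} = 0$. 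For the $\R^s$ factor, select a Lagrangian transversal $\Sigma$ to $X$ through $x_0$ and adjoin Darboux-conjugate pairs $(x_j, y_j)_{j=1}^{s}$ on $\Sigma$ adapted to the non-compact leaf directions; extending off $\Sigma$ by the commuting Hamiltonian flows produces coordinates $(x, y, \theta, I)$ on a neighborhood $D$ of $X$, in which
$$
\omega \;=\; \sum_{j=1}^s dx^j \wedge dy^j \;+\; \sum_{k=1}^{n-s} d\theta_k \wedge dI_k.
$$

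\textbf{Independence of $H$ from $\theta$ and the main obstacle.} Since $H$ Poisson-commutes with each $p_j$, the Hamilton vector field $H_H$ is tangent to the $\R^n$-orbits $X$, so $H$ is constant along the angle directions; in the coordinates produced above this reads $H = H(x, y, I)$. I expect the most delicate step to be assembling a \emph{single} symplectic chart that simultaneously realizes Arnold's action--angle pairs $(\theta, I)$ on the torus factor and Darboux pairs $(x, y)$ on the non-compact factor. This requires choosing the Lagrangian transversal $\Sigma$ so that the extension of the $(x,y)$-coordinates by the commuting Hamiltonian flows preserves $\omega$ and remains independent of $(\theta, I)$ --- effectively a relative Darboux lemma along $X$ --- and disentangling the compact and non-compact sectors before invoking the respective constructions.
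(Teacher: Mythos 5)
First, a point of comparison: the paper does not prove Theorem \ref{LATH} at all --- it is quoted as the (generalized) Liouville--Arnold theorem with a citation to the literature (Eliasson, Zung), so there is no internal proof to measure your argument against. What you have written is the standard textbook route (Arnold's construction, extended to allow non-compact regular components), and its skeleton is the right one: regularity plus Poisson commutativity gives $n$ independent commuting vector fields tangent to the level set, orbits are open, a connected component $X$ is a single orbit $\R^n/\Gamma\cong\R^s\times\T^{n-s}$, actions are defined by $\oint\alpha$ over the torus cycles, and invariance of $H$ under the commuting flows kills the $\theta$-dependence.

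Two genuine gaps remain, however. (i) Transitivity of a bona fide $\R^n$-action on $X$ requires the restricted Hamiltonian vector fields to be \emph{complete} on the level set. This is automatic when $X$ is compact (e.g.\ under the paper's properness assumption), but in exactly the case the theorem is meant to cover ($s>0$, non-compact $X$) completeness must either be hypothesized or argued; without it you only control the orbit pseudogroup, and the identification $X\cong\R^n/\Gamma$ and the smooth family $e_k(c')$ of lattice generators are not yet justified. (ii) The decisive step --- producing a \emph{single} symplectic chart on a whole neighborhood $D$ of $X$ in which $\omega=\sum_j dx^j\wedge dy^j+\sum_k d\theta_k\wedge dI_k$, the angles are globally defined mod $2\pi$, and the $(x,y)$-block is decoupled from $(\theta,I)$ --- is precisely the step you defer (``effectively a relative Darboux lemma along $X$''). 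Declaring $\{\theta_k,I_\ell\}=\delta_{k\ell}$ by ``reparametrizing the flows'' does not by itself produce canonical coordinates; one needs, e.g., a Lagrangian section of the fibration near $X$ together with a generating-function argument ($S=\int\alpha$ along the leaves), or the Carath\'eodory--Jacobi--Lie / Weinstein Lagrangian-neighborhood machinery, as in the cited treatments of Eliasson and Zung. Until that construction is carried out, the normal form $H=H(x,y,I)$ in the stated coordinates is asserted rather than proved, so the proposal should be regarded as a correct outline with its central analytic step missing.
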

\noindent Note that $s =0 \iff X \subset (S^*M)_{reg, c}$.

\begin{lem} \label{OD} The set $(S^*M)_{reg, c}$ is open dense. \end{lem}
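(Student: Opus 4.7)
The plan is to show that $(S^*M)_{\rm{reg}, c}$ coincides with the full set $(S^*M)_{\rm{reg}}$ of regular points, and that the latter is open and dense in $S^*M$.

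Openness of $(S^*M)_{\rm{reg}}$ is immediate, since it is defined as the non-vanishing locus of the smooth $d$-form $dp_1 \wedge \cdots \wedge dp_d$, and non-vanishing of a continuous section is an open condition.

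The main step is to prove that every regular orbit is compact, which gives $(S^*M)_{\rm{reg}} \subset (S^*M)_{\rm{reg}, c}$. Fix $(x, \xi) \in (S^*M)_{\rm{reg}}$, set $c = \pcal(x, \xi)$, and let $\Lambda$ denote the connected component of $\pcal^{-1}(c)$ containing $(x, \xi)$. Because $\pcal$ is assumed proper, $\pcal^{-1}(c)$ is compact, so $\Lambda$ is a compact connected submanifold consisting entirely of regular points (regularity being an open condition). Theorem \ref{LATH} then provides action-angle coordinates $(\theta, I) \in \mathbb{T}^d \times U$ on a neighborhood of $\Lambda$ in which $\pcal$ depends only on $I$ and $\Lambda = \{I = I_0\}$. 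The Hamiltonian $\R^d$-action becomes the translation $\theta \mapsto \theta + \sum_i t_i \omega_i(I_0)$ with frequency vectors $\omega_i(I_0) = \nabla_I p_i(I_0)$. Regularity of $(x, \xi)$ is equivalent to $\det(\partial p_i / \partial I_j)(I_0) \neq 0$, so $\omega_1(I_0), \ldots, \omega_d(I_0)$ span $\R^d$. The induced map $\R^d \to \mathbb{T}^d$ is therefore surjective, and the $\R^d$-orbit through $(x, \xi)$ fills out the compact torus $\Lambda$.

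Density of $(S^*M)_{\rm{reg}}$ follows from functional independence of the generators $p_1, \ldots, p_d$, which is implicit in the notion of complete integrability. If the form $dp_1 \wedge \cdots \wedge dp_d$ vanished identically on some open set $U$, the $dp_i$ would be linearly dependent throughout $U$, producing a non-trivial local functional relation among $p_1, \ldots, p_d$ and contradicting independence. Hence the complement of $(S^*M)_{\rm{reg}}$ has empty interior.

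The main conceptual obstacle is ensuring the hypotheses of Theorem \ref{LATH} apply uniformly across regular points, namely compactness of regular level-set components (from properness of $\pcal$) and regularity (the defining condition); both are standing assumptions of this section. Once these are granted, the conclusion is a direct read-off from the Liouville-Arnold normal form.
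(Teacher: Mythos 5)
Your central step is false, and it is contradicted by examples that this paper itself relies on. You claim that if $(x,\xi)$ is a regular point and $\Lambda$ is the connected component of $\pcal^{-1}(\pcal(x,\xi))$ containing it, then $\Lambda$ ``consists entirely of regular points (regularity being an open condition).'' Openness of the regular set only says that a neighborhood of each regular point is regular; it does not propagate regularity across a whole connected component of a level set. A component through a regular point can perfectly well contain singular points, in which case the level is a singular level (Definition \ref{SING}), Theorem \ref{LATH} does not apply to it in the way you use it, and the $\R^d$-orbit of the regular point need not be a torus. This is exactly the situation at an umbilic point of the two-dimensional tri-axial ellipsoid discussed after Lemma \ref{REGintro}: $S^*_{x_0}\ecal_A$ decomposes into two singular circle orbits and two \emph{non-compact} regular cylinder orbits, all on one singular level. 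So your intermediate claim $(S^*M)_{\rm reg}\subset (S^*M)_{\rm reg,c}$ is simply wrong, and if it were right, the paper's dichotomy in Lemma \ref{REGintro}, Lemma \ref{NONCPTPROP}, and Proposition \ref{PSFSING} (regular points with non-compact orbits) would be vacuous. Your openness argument collapses with it, since it rested on the identification $(S^*M)_{\rm reg,c}=(S^*M)_{\rm reg}$.

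What the lemma actually requires, and what the paper does, is weaker and different: one shows that non-compact regular orbits cannot fill an open set. Near a \emph{compact} regular orbit, Liouville--Arnold gives a neighborhood foliated by invariant tori, which yields openness of $(S^*M)_{\rm reg,c}$. For density, if non-compact regular orbits filled an open set, the closure of each such orbit would contain a singular orbit lying on the same level, so one would obtain an open set of singular values of $\pcal$, contradicting Sard's theorem. Your density argument for the regular set (no open set where $dp_1\wedge\cdots\wedge dp_d\equiv 0$) is the unproblematic part, but by itself it does not address compactness of orbits, which is the actual content of the lemma. To repair your proof you would have to abandon the claim that regular components of level sets are automatically regular levels and instead argue, as above, about where non-compact regular orbits can accumulate.
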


\begin{proof} By Sard's theorem, the set of regular points is open dense.  Further, $(S^*M)_{reg, c}$ is open as long as there exists
a single compact regular orbit.  It is only necessary to prove that non-compact regular orbits cannot fill out
an open set. But the closure of any orbit in that open set contains a singular locus invariant under the $\R^{n-1}$ action. That is, the closure contains a singular
orbit. That orbit lies on the same level set and so the level set of the orbit is singular. Hence an open set of non-compact singular orbits produces an open
set of singular levels, contradicting Sard's theorem.

\end{proof}

\subsection{\label{SINGPTSECT} Singular points}

At a singular point,  let
 $$K := \bigcap_{i = 1}^n\;
\ker d p_i (v),\;\;\;L = \mbox{span} \; \{H_{p_1}(v), \dots,
H_{p_d}(v)\}.$$
Then,
\begin{equation} \label{RANK}  \dim L := 
{\mbox rank}\, (dp_{1},...,dp_{d})|_{v} = k < d.
\end{equation}

As Lemma \ref{REGintro} indicates, singular points play an important role in this article. However, we do not need to know very
much about them.
Although we will not be using it, there is a  theorem due to Eliasson (and, independently, Vey) asserting that near a nondegenerate singular point, the associated singular
 Lagrangian foliation is diffeomorphic to that of the linearized system.  The relevant notion of  non-degeneracy  is unrelated to the Kolmogorov or
iso-energetic non-degeneracy conditions. Singular points in the case of ellipsoids are studied in \cite{Au, DDB, Zung}.

\subsection{Non-degeneracy, action-angle variables and frequency map at regular compact points}

 \begin{defn} \label{FREQMAPDEF} Assume $(x, \xi)$ is a compact regular point, so that a neighborhood of $(p, \xi)$ is foliated
by invariant tori $T_I$.
   Let $|\xi|_g^2 = H(I)$ be the expression of the Hamiltonian in local action-angle
variables in a neighborhood of $(x, \xi)$. Then, the frequency vector of the torus $T_I$ with action
variables $I$  is defined by $\omega_I = \nabla_I H$. The frequency map $ \omega_I: U\to \R^n$ is the map
$(x, \xi) \to \omega_{I(x, \xi)}. $  \end{defn}

 In the local action-angle variables, the  Hamilton orbit of the geodesic flow  has the form,
\begin{equation} \label{GEOAA} G^t(I, \phi) =   (I, \phi  + t \omega_I). \end{equation}

A type of `degeneracy' occurs when $\omega_I$ is `resonant'.

\begin{defn} A resonant torus is a torus $T_I$ for which the components $\omega_I$ are dependent
over $\Q$, i.e. there exists a non-zero integer vector $\vec n = (n_j) \in \Z^n$ so that $\langle \vec n, \vec \omega = \sum_j n_j \omega_j = 0. $  The resonant lattice of $\omega$ is the set of all $\vec n \in \Z^n$ such that $\vec n \cdot \vec \omega = 0$.  $\omega_I$ is said to be `non-resonant' if its components are independent over $\Q$.

 \end{defn}

\subsection{Non-compact regular orbits lie on singular levels}

\begin{lem} \label{NONCPTPROP} If compact regular orbits form an open dense set, then non-compact  regular orbits only occur on singular levels
of the moment map. \end{lem}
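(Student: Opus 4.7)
The plan is a proof by contradiction using properness of $\pcal$. Suppose $(x_0,\xi_0)$ is a non-compact regular point, and, toward a contradiction, assume that $L_0 := \pcal^{-1}(\pcal(x_0,\xi_0))$ contains no singular points. Let $X_0$ denote the orbit of the Hamiltonian $\R^d$-action through $(x_0,\xi_0)$; by the action-angle description of regular levels (Theorem \ref{LATH}), $X_0$ coincides with the connected component of $L_0$ containing $(x_0,\xi_0)$, and $X_0$ is non-compact.

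The first step uses properness to make $\overline{X_0}\setminus X_0$ non-empty. Since $\pcal$ is proper, $L_0$ is compact, and therefore $\overline{X_0}\subset L_0$ is compact. If $X_0$ were itself closed in $S^*M$, it would coincide with $\overline{X_0}$ and hence be compact, contradicting the assumption that $X_0$ is non-compact. So I pick a limit point $(y,\eta)\in \overline{X_0}\setminus X_0$; by continuity $(y,\eta)\in L_0$, and by our hypothesis that $L_0$ is regular, $(y,\eta)$ is a regular point of $\pcal$.

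The second step localizes near $(y,\eta)$ to derive the contradiction. Because every point of $L_0$ is regular, $c = \pcal(x_0,\xi_0)$ is a regular value of $\pcal$, so the regular value theorem provides a neighborhood $V\subset S^*M$ of $(y,\eta)$ on which $V\cap L_0$ is a single submanifold chart, in particular connected. Let $Y$ be the component of $L_0$ containing $(y,\eta)$; then $V\cap L_0\subset Y$. Choose a sequence $(y_n,\eta_n)\in X_0$ with $(y_n,\eta_n)\to (y,\eta)$. For $n$ large, $(y_n,\eta_n)\in V\cap L_0\subset Y$, so $X_0\cap Y\neq \emptyset$. Since distinct connected components of $L_0$ are disjoint, $X_0 = Y$, which forces $(y,\eta)\in X_0$ and contradicts $(y,\eta)\in \overline{X_0}\setminus X_0$. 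Hence $L_0$ must contain a singular point after all.

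The main subtlety, and the one place Theorem \ref{LATH} is genuinely invoked, is the identification of each connected component of a regular level set with a single orbit of the $\R^d$-action; the action-angle coordinates make this transparent since each component has the local product form $T^{d-s}\times \R^s$. I should note that the density hypothesis on $(S^*M)_{\reg,c}$ does not actually enter this form of the argument -- properness of $\pcal$ by itself produces the limit point $(y,\eta)$. The density assumption would instead be indispensable in an alternative route which attempts to show that the non-compact regular locus is open and then invokes Sard's theorem, as in the proof of Lemma \ref{OD}.
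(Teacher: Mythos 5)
Your proof is correct, but it follows a genuinely different route from the paper's. The paper argues locally at the orbit itself: it takes the (nonempty, compact, invariant) limit set $\overline{\ocal}\setminus\ocal$ of the non-compact orbit and shows every point of it is \emph{singular} -- a regular limit point would have a compact Lagrangian (torus) orbit, and then the Liouville--Arnold foliation of a neighborhood of that torus by invariant tori is incompatible with the non-compact orbit entering the neighborhood; this is where the hypothesis about compact regular orbits and the torus picture is invoked. You instead argue globally on the level set: assuming the level contains no singular point, properness makes it a compact embedded submanifold, all of whose connected components are single orbits (your appeal to Theorem \ref{LATH} is fine here, though the elementary justification is simply that on an everywhere-regular level each $\R^{d}$-orbit is open, hence clopen, hence a component), and then your limit-point argument shows the non-compact orbit would have to be its own closed component, hence compact -- in fact you could shorten this further, since components of a compact space are automatically closed and therefore compact. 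What each approach buys: yours is cleaner, needs only properness (automatic here since $S^*\ecal_A$ is compact), and, as you correctly observe, never uses the open-density of compact regular orbits; the paper's version, at the cost of that extra hypothesis, yields the slightly stronger and more useful local statement that the $\alpha$- and $\omega$-limit points of the non-compact orbit are themselves singular points, which is the form in which the lemma is actually exploited later (e.g.\ in the discussion surrounding Proposition \ref{PSFSING}).
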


\begin{proof} Let $\R^{n-1} \cdot (x, \xi) = {\mathbb T}^k \times \R^{n-1-k} \cdot (x, \xi)$ be a non-compact regular orbit. The closure $\overline{\ocal}$  of the orbit  $
\ocal = \R^{n-1} \cdot (x, \xi) $
is a compact set lie on 
a level set of the moment map and  invariant under the $\R^{n-1}$ action.  The orbit  $\R^{n-1} \cdot (x, \xi) $ and its closure is open in the relative topology of 
the level set and of 
 $\overline{\ocal}$, and  $\overline{\ocal} \backslash \ocal$ consists of limit points of $R^{n-1-k} \cdot (x, \xi)$ and   is a relatively closed and compact subset.
 We claim that all points in the limit set are singular. If not, there exists a regular point $\zeta$ whose orbit is Lagrangian. But it is also compact. Hence, the orbit
 must be a Lagrangian torus. But then an open neighborhood of $\zeta$ has a Lagrangian torus orbit, contradicting the fact that  $R^{n-1-k} \cdot (x, \xi)$ 
 intersects the neighborhood.
 
\end{proof}

\subsection{The image of $\Lambda_{x_0}$}
Another general fact regarding self-focal points is the following general result. 
  \begin{lem} \label{IMLAMBDA} Let $(M,g)$ be any compact Riemannian manifold possessing a self-focal point $x_0$, and  $\Lambda_{x_0}$ be defined as in \eqref{FLDEF}. Let  
  $\pi: S^*\ecal_A \to \ecal_A$ be the natural projection. Then, $\pi(\Lambda_{x_0}) = \ecal_A$. 
  
  \end{lem}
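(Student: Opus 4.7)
The plan is to identify $\pi(\Lambda_{x_0})$ with the full image of the exponential map at $x_0$ and then invoke the Hopf--Rinow theorem.

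First I would unwind the definitions of $\Lambda_{x_0}$ and of the geodesic flow. Using the musical isomorphism to identify $S^*_{x_0}\ecal_A$ with the unit tangent sphere $S_{x_0}\ecal_A$, and the standard identity
$\pi\circ G^t(x_0,\xi)=\exp_{x_0}(t\xi^{\sharp})$
relating the Hamiltonian flow of $|\xi|_g^2$ to the exponential map, the flow-out \eqref{FLDEF} projects to
\[
\pi(\Lambda_{x_0})\;=\;\bigcup_{t\in\R}\exp_{x_0}\bigl(tS_{x_0}\ecal_A\bigr)\;=\;\exp_{x_0}\bigl(T_{x_0}\ecal_A\bigr).
\]
Thus the claim reduces to the surjectivity of $\exp_{x_0}:T_{x_0}\ecal_A\to\ecal_A$.

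Second, I would invoke Hopf--Rinow. A compact Riemannian manifold is automatically geodesically complete, so the classical Hopf--Rinow theorem guarantees that any two points are joined by a (minimizing) geodesic starting at $x_0$. This forces $\exp_{x_0}$ to be surjective onto $\ecal_A$, and combining with the identification above yields $\pi(\Lambda_{x_0})=\ecal_A$.

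There is essentially no obstacle: the argument works for any compact (indeed, complete) Riemannian manifold, and the self-focal hypothesis is not needed for the surjectivity step. It does however supply a natural consistency check, since the self-focal property implies $\Lambda_{x_0}=\bigcup_{t\in[0,T]}G^t(S^*_{x_0}\ecal_A)$ is the continuous image of the compact set $[0,T]\times S^*_{x_0}\ecal_A$, so $\pi(\Lambda_{x_0})$ is manifestly compact, in agreement with the conclusion that it coincides with the compact manifold $\ecal_A$.
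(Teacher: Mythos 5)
Your proposal is correct and follows essentially the same route as the paper: identify $\pi(\Lambda_{x_0})$ with the image $\exp_{x_0}(T_{x_0}\ecal_A)$ and then use surjectivity of the exponential map on a complete (compact) manifold, which the paper cites as well-known and you justify explicitly via Hopf--Rinow.
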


\begin{proof}  It is well-known that the image $\exp_{x_0} T_{x_0} \ecal_A$ is all of $\ecal_A$, i.e. for any $y \in \ecal_A$, there
exists $\xi \in S^*_{x_0} \ecal_A$ and a time $t$ so that $y = \exp_{x_0} t \xi $. But $\pi (\Lambda_{x_0}) = \exp_{x_0} T_{x_0} \ecal_A$.

\end{proof}

\section{\label{MOSERSECT} Moser's isospectral approach to complete integrability of ellipsoids}

In this section, we review the  isospectral approach of Moser
 \cite{M80} to the complete integrability of the geodesic flow of an ellipsoid. We also review a nice observation of Audin on the interlacing property
 of eigenvalues of $A$ and eigenvalues of $L(x, \xi)$.

Let $A$ be a real positive definite symmetric $n \times n$ matrix ($A \in \rm{Sym}(n)$) with distinct eigenvalues (for the moment). An ellipsoid is
defined by 
$$\ecal_A: = \{x \in \R^n: \langle A^{-1} x, x \rangle = 1\}. $$
The equations of motion of a geodesic of the ellipsoid are given by curves on the ellipsoid satisfying,
$$\left\{ \begin{array}{l} \frac{d^2 x}{dt^2} = - \nu A^{-1} x, \\ \\
\nu = \frac{\langle A^{-1} y, y  \rangle}{|A^{-1} x|^2}, \;\;y = \frac{dx}{dt}. \end{array} \right. $$
Note that $\dim \ecal_A = n-1$, so that in addition to $H(x, \xi)= |\xi|_g$, complete integrability requires $n-2$ further Poisson commuting
functions.

Moser has a beautiful interpretation of the geodesic equations in terms of isospectral deformations of a Lax matrix,
\begin{equation} \label{MOSERLAX} L(x,y) = P_y (A - x \otimes x) P_y, \end{equation}
where $P_y$ is orthogonal projection onto the hyperplane orthogonal to $y$. Let $x \in \ecal_A$ and let $y = \frac{dx}{dt}$.
Then the eigenvalues of $L(x,y) $ are preserved under the geodesic flow. 

We will assume that $x \in \ecal_A$ and that $y \in S^*_x \ecal_A$. To emphasize these conditions on  $(x,y)$, we henceforth
use the notation $(x, \xi) \in S^* \ecal_A$. Thus, one obtains an $n \times n $ symmetric matrix for each $(x, \xi) \in S^*\ecal_A$. 
Two of the eigenvalues are $0$:

\begin{lem} \label{NORMALLEM} Let $x_0 \in \ecal_A$. Then,  $A^{-1} x$ is normal to $T_x \ecal_A$. Moreover,  $L(x_0, \xi)$ and $A - x_0 \otimes x_0^*$ annihilate the normal vector ${\bf n}$ at $x_0$. 
Hence, both take $T_{x_0} \ecal_A$ to itself. \end{lem}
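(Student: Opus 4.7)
The plan is to verify the three assertions by direct computation, using only the defining equation of $\ecal_A$, the symmetry of $A$, and the definition of the projection $P_\xi$.

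First I would establish normality of $A^{-1}x$ to $T_x \ecal_A$. The ellipsoid is the level set $\{x : F(x) = 1\}$ for $F(x) = \langle A^{-1} x, x \rangle$. Since $A^{-1}$ is symmetric, $\nabla F(x) = 2 A^{-1} x$, and this gradient is the Euclidean normal to the level set $\ecal_A$ at $x$. Hence the unit outward normal is ${\bf n}_x = A^{-1} x / |A^{-1} x|$.

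Next I would show that the symmetric matrix $A - x_0 \otimes x_0^*$ annihilates ${\bf n}_{x_0}$; it suffices to show it annihilates $A^{-1} x_0$. Compute
\[
(A - x_0 \otimes x_0^*) A^{-1} x_0 \;=\; x_0 - x_0 \langle x_0, A^{-1} x_0 \rangle \;=\; x_0 - x_0 \;=\; 0,
\]
where the last step uses $x_0 \in \ecal_A$, i.e.\ $\langle A^{-1} x_0, x_0 \rangle = 1$. For the Lax matrix $L(x_0,\xi) = P_\xi (A - x_0 \otimes x_0^*) P_\xi$, observe that $\xi \in S^*_{x_0} \ecal_A$ is tangent to the ellipsoid, so $\langle \xi, {\bf n}_{x_0}\rangle = 0$. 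Since $P_\xi = I - \xi \otimes \xi^*$, this gives $P_\xi {\bf n}_{x_0} = {\bf n}_{x_0}$, and therefore $L(x_0,\xi) {\bf n}_{x_0} = P_\xi (A - x_0 \otimes x_0^*) {\bf n}_{x_0} = 0$.

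Finally, both $A - x_0 \otimes x_0^*$ and $L(x_0,\xi)$ are symmetric matrices on $\R^n$, and any symmetric matrix preserves the orthogonal complement of its kernel. Since each annihilates ${\bf n}_{x_0}$ and $T_{x_0} \ecal_A = ({\bf n}_{x_0})^\perp$, both operators map $T_{x_0} \ecal_A$ into itself, completing the lemma. There is no real obstacle here; the whole statement reduces to using the defining equation of $\ecal_A$ once and the tangency condition $\xi \perp {\bf n}_{x_0}$ once.
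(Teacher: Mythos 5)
Your proposal is correct and follows essentially the same route as the paper: the normality of $A^{-1}x$ from the defining equation (you via the gradient, the paper by differentiating along a curve, which is the same computation), then the identity $(A - x_0\otimes x_0^*)A^{-1}x_0 = x_0 - \langle A^{-1}x_0,x_0\rangle x_0 = 0$, and finally $P_\xi {\bf n} = {\bf n}$ from $\xi \perp {\bf n}$ to kill ${\bf n}$ under $L(x_0,\xi)$. Your closing observation that both operators are symmetric, so annihilating ${\bf n}$ forces them to preserve ${\bf n}^\perp = T_{x_0}\ecal_A$, makes explicit a step the paper leaves implicit, and is a welcome addition.
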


\begin{proof} First we observe that $A^{-1} x$ is normal to $T_x \ecal_A$.
 Let  $x(t)$ be any curve on $\ecal_A$ with $x(0) = x, \dot{x}(0)= \xi$. Then,
 $\langle A^{-1} x(t), x(t) \rangle = 1$, hence,
 $$0 = \frac{d}{dt} |_{t =0} \langle  \langle A^{-1} x(t), x(t) \rangle  = 2 \langle A^{-1} x, \xi \rangle. $$ 
 Thus, ${\bf n} = \frac{A^{-1} x_0}{|| A^{-1} x_0||}$. 
%%Consider whether $L(x_0, \xi)$ and $A - x_0 \otimes x_0^*$ annihilate the normal vector ${\bf n}$ at $x_0$. We recall that $\ecal_A$i%s defined by $\langle A^{-1} x, x, \rangle =1$. The normal vector points in the direction $A^{-1} x_0$  of $\nabla_x \langle A^{-1} x, x, \rangle $.
Then,
$$(A - x_0 \otimes x_0^*) {\bf n} = 0 \iff (A - x_0\otimes x_0^*) A^{-1} x_0 = 0 \iff x_0 = \langle A^{-1} x_0, x_0 \rangle x_0 = 0.$$
On the other hand, 
$$L(x_0, \xi) {\bf n} = P_{\xi} (A - x_0 \otimes x_0^*) P_{\xi} {\bf n} = P_{\xi} (A - x_0 \otimes x_0^*) {\bf n} =0,$$
since $\xi \in S^*_{x_0} \ecal_A$ is orthogonal to ${\bf n}$. \end{proof}

\begin{rem} Moser does not point this out explicitly in \cite{M80} because $x$ is allowed  to range over all of $\R^n$.

\end{rem}

Moreover, by definition $L(x, \xi) \xi =0$. Thus, 

\begin{cor}\label{Mult2LEM}  For all $(x, \xi) \in S^*\ecal_A$, $0$ is an eigenvalue of $L(x, \xi)$ of multiplicity two. The corresponding eigenvectors 
are ${\bf n}_x$ and $\xi$. \end{cor}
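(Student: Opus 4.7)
The plan is to show that $0$ lies in the spectrum of $L(x,\xi)$ with multiplicity at least two by producing two linearly independent null vectors, namely $\xi$ and ${\bf n}_x$, and then to argue that these exhaust the $0$-eigenspace generically.

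First I would check that $L(x,\xi)\xi=0$. This is immediate from the definition $L(x,\xi)=P_\xi(A-x\otimes x^*)P_\xi$, since $P_\xi$ is orthogonal projection onto $\xi^\perp$ and therefore annihilates $\xi$. Next I would verify that $L(x,\xi){\bf n}_x=0$. Because $(x,\xi)\in S^*\ecal_A$, the covector $\xi$ is tangent to $\ecal_A$ at $x$, so $\xi\perp {\bf n}_x$ and consequently $P_\xi {\bf n}_x={\bf n}_x$. Applying Lemma~\ref{NORMALLEM}, which says that $(A-x\otimes x^*){\bf n}_x=0$, we then get $L(x,\xi){\bf n}_x=P_\xi(A-x\otimes x^*){\bf n}_x=0$. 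Since $\xi$ and ${\bf n}_x$ are orthogonal (hence linearly independent) and both nonzero, the kernel of $L(x,\xi)$ has dimension at least two.

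For the claim that the multiplicity is exactly two, I would observe that $L(x,\xi)$ preserves the splitting $\R^n=\R\xi\oplus\R{\bf n}_x\oplus V$, where $V=(\mbox{span}\{\xi,{\bf n}_x\})^\perp$, and that on $V$ the projection $P_\xi$ acts as the identity. Hence the nonzero spectrum of $L(x,\xi)$ coincides with the spectrum of the restriction of $A-x\otimes x^*$ to $V$, which is an $(n-2)$-dimensional subspace of $T_x\ecal_A$. These eigenvalues are precisely the $\{\lambda_j(x,\xi)\}_{j=1}^{n-2}$ referred to in the introduction, and the genericity of their non-vanishing (i.e.\ $L(x,\xi)$ being of rank $n-2$) is what the corollary asserts. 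The main (mild) obstacle is keeping straight the two projections and the restriction to $V$; no serious calculation is required because Lemma~\ref{NORMALLEM} has already done the essential work of identifying ${\bf n}_x$ as a null vector of the unprojected matrix $A-x\otimes x^*$.
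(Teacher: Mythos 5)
Your verification that $\xi$ and ${\bf n}_x$ are null vectors of $L(x,\xi)$ is exactly the paper's argument: $P_\xi\xi=0$ gives $L(x,\xi)\xi=0$ by definition, and $\xi\perp{\bf n}_x$ together with Lemma \ref{NORMALLEM} (i.e.\ $(A-x\otimes x^*){\bf n}_x=0$) gives $L(x,\xi){\bf n}_x=0$. So the ``multiplicity at least two'' half of your proposal coincides with the paper's proof of the corollary.

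Your treatment of exactness, however, has a genuine gap, and also a misstatement. First the misstatement: $A-x_0\otimes x_0^*$ does \emph{not} map $V=({\rm span}\{\xi,{\bf n}_x\})^\perp$ into $V$; for $v\in V$ the quantity $\langle (A-x\otimes x^*)v,\xi\rangle$ is generically nonzero (its vanishing for all eigenvectors is precisely the special condition \eqref{VARVANISH} that characterizes self-focal points in Section \ref{MULTASECT}). What is true, since $L$ is symmetric and kills $\xi$ and ${\bf n}_x$, is that the nonzero spectrum of $L$ is the spectrum of the \emph{compression} $P_\xi(A-x\otimes x^*)P_\xi$ acting on $V$ (equivalently, of the quadratic form of $A-x\otimes x^*$ restricted to $V$), not of the operator restriction. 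Second, and more importantly, you never show that this compression has trivial kernel; you defer to ``genericity of non-vanishing,'' which is both circular and weaker than the statement, since the corollary asserts multiplicity exactly two at \emph{every} $(x,\xi)\in S^*\ecal_A$. The paper closes this with Lemma \ref{Only2LEM}: if $\eta\perp\xi$, $\eta\neq 0$, and $L(x,\xi)\eta=0$, then $(A-x\otimes x^*)\eta=c\xi$; pairing with suitable vectors forces $c\langle A^{-1}\xi,\xi\rangle=0$, hence $c=0$, and then $A\eta=\langle x,\eta\rangle x$ forces $\eta$ to be a multiple of $A^{-1}x$, i.e.\ of ${\bf n}_x$. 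Thus $\ker L(x,\xi)={\rm span}\{\xi,{\bf n}_x\}$ everywhere, which is the statement you need; some such pointwise argument must replace your appeal to genericity.
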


 We further observe the following:
 \begin{lem}
 \label{Only2LEM}  If $\xi \in S^*_x \ecal_A$ and $\eta \bot \xi$, $\eta \not=0 $, then  $L(x, \xi) \eta \not= 0.$
  \end{lem}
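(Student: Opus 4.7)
The plan is to sharpen Corollary \ref{Mult2LEM} by showing that the zero eigenspace of $L(x,\xi)$ is spanned precisely by $\xi$ and ${\bf n}_x$, so that on the $(n-2)$-dimensional complement in $T_x\ecal_A$ the operator $L(x,\xi)$ is injective. The argument is elementary linear algebra and hinges on positive-definiteness of $A$ together with the orthogonality $\xi \perp {\bf n}_x$ established in Lemma \ref{NORMALLEM}.

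First I would suppose $\eta \perp \xi$ and $L(x,\xi)\eta = 0$. Since $P_\xi \eta = \eta$, this rewrites as $P_\xi\bigl[(A - x\otimes x^*)\eta\bigr] = 0$, which forces $(A - x\otimes x^*)\eta$ to be a (real) multiple of $\xi$; i.e.
\begin{equation*}
A\eta - \langle x,\eta\rangle\, x \;=\; c\,\xi \qquad \text{for some } c\in\R.
\end{equation*}
Applying $A^{-1}$ and recalling that $A^{-1}x = |A^{-1}x|\,{\bf n}_x$ (Lemma \ref{NORMALLEM}), I would rearrange this as
\begin{equation*}
\eta \;=\; \mu\,{\bf n}_x \;+\; c\,A^{-1}\xi, \qquad \mu := \langle x,\eta\rangle\,|A^{-1}x|.
\end{equation*}

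The decisive step is to test this identity against $\xi$. Since $\eta \perp \xi$ by assumption and $\xi\perp{\bf n}_x$ by Lemma \ref{NORMALLEM}, taking the inner product with $\xi$ gives $0 = c\,\langle A^{-1}\xi,\xi\rangle$. Because $A^{-1}$ is positive definite and $\xi\neq 0$, the pairing $\langle A^{-1}\xi,\xi\rangle$ is strictly positive, and thus $c=0$. Substituting back shows $\eta$ is a scalar multiple of ${\bf n}_x$.

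This identifies $\ker L(x,\xi) \cap \xi^\perp$ with $\R\,{\bf n}_x$; in particular, as intended by the lemma (cf. Corollary \ref{Mult2LEM}), any $\eta \in T_x\ecal_A$ with $\eta \perp \xi$ and $\eta\neq 0$ satisfies $\eta\not\in\R{\bf n}_x$ and hence $L(x,\xi)\eta\neq 0$. There is no substantive obstacle: the only bookkeeping to watch is the role of the two distinguished directions ($\xi$ tangent and ${\bf n}_x$ normal) and the strict positivity of $\langle A^{-1}\xi,\xi\rangle$, both already in place from the preceding lemmas.
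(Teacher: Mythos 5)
Your proposal is correct and its core computation is the same as the paper's: from $L(x,\xi)\eta=0$ and $P_\xi\eta=\eta$ you deduce $(A-x\otimes x^*)\eta=c\,\xi$, apply $A^{-1}$, pair with $\xi$, and use $\xi\perp{\bf n}_x$ together with $\langle A^{-1}\xi,\xi\rangle>0$ to force $c=0$. Where you differ is the endgame, and your version is the more careful one. The paper stops at $0=c\,\langle A^{-1}\xi,\xi\rangle$ and declares a contradiction, but that identity only yields $c=0$; with $c=0$ one has $(A-x\otimes x^*)\eta=0$, i.e. $\eta\in\R\,{\bf n}_x$, which is not by itself contradictory --- indeed ${\bf n}_x$ is orthogonal to $\xi$ and does lie in $\ker L(x,\xi)$ (Lemma \ref{NORMALLEM}, Corollary \ref{Mult2LEM}), so the lemma as literally stated has to be read with $\eta$ tangent to $\ecal_A$ (equivalently $\eta\perp{\bf n}_x$). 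Your argument supplies exactly this missing step: it identifies $\ker L(x,\xi)\cap\xi^{\perp}$ as $\R\,{\bf n}_x$ and then excludes the normal direction via the tangency hypothesis, which is precisely what the later applications (zero eigenvalue of multiplicity exactly two, as in Proposition \ref{prop:strongmult}) require. In short: same route as the paper, but your treatment of the $c=0$ case closes a small gap in the paper's proof and pins down the intended hypothesis on $\eta$.
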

 
 \begin{proof} We may assume for purposes of contradiction that $\eta \bot \xi$, so that $P_{\xi} \eta = \eta. $
 Then $L(x, \xi) \eta =0$ implies that $P_{\xi} (A - x \otimes x) \eta = 0$, i.e that $(A - x\otimes x) \eta = c \xi.$
 But, $\langle \eta, \xi \rangle = 0$ and $\langle A^{-1} x, \xi \rangle = 0$ implies 
 $$\begin{array}{lll} A \eta - \langle x, \eta \rangle x = c \xi  & \implies &  \eta - \langle x, \eta \rangle A^{-1} x = c A^{-1} \xi \\&&\\
 & \implies & \langle \xi,  \eta \rangle  - \langle x, \eta \rangle \langle A^{-1} x, \xi \rangle = c \langle  A^{-1} \xi, \xi \rangle
 \\&&\\
 & \implies &  0  = c \langle  A^{-1} \xi, \xi \rangle, 
 
 \end{array}$$ 
 a contradiction.

 \end{proof}

 \subsection{Eigenvalue moment map}
 
We denote the non-zero eigenvalues by $\lambda_1(x, \xi) \leq  \lambda_2(x, \xi) \leq  \cdots \leq \lambda_{n-2}(x, \xi). $ Since $\lambda_j(x, \xi)$ become
singular on the {\it coincidence set} where two eigenvalues are equal, 
 it is better to use a basis of invariant polynomials, such as the
 elementary symmetric functions $e_j(\vec \lambda)$  or the power functions $p_k(\vec \lambda) = \sum_{j=1}^{n-2} \lambda_j^k$. However,
 as explained below, these are not coordinates and their differentials vanish at $(x, \xi)$ where $L(x, \xi)$ have multiple eigenvalues.
 In other words $d e_j$ are not everywhere independent on $\R^{n-2}$.
On the vector space $\R^{n-1}$ with coordinates $(\lambda_1, \dots, \lambda_{n-1})$, 
 $d e_1 \wedge \cdots \wedge d e_{n-1} = \Delta(\vec \lambda) d \lambda_1 \wedge \cdots \wedge d \lambda_{n-1}$. The right side vanishes
on the `coincidence set' $\dcal$  where some pair $\lambda_i = \lambda_j$ with $i \not=j$.

 We pick one of these bases of symmetric polynomials, and obtain a moment map,
 $$\pcal: T^*\ecal_A \to \R^{n-2}, \;\; \pcal(x, \xi) = (e_1 (\vec \lambda), \dots, e_{n-2}(\vec \lambda)), \vec \lambda = \rm{Sp}(L(x, \xi)). $$
 Since $L(x, \xi)$ is homogeneous of degree zero in $\xi$ we restrict it to $S^* \ecal_A$ and define,
  \begin{equation} \label{pcaldef} \pcal: S^*\ecal_A \to \R^{n-2}, \;\; \pcal(x, \xi) = (e_1 (\vec \lambda), \dots, e_{n-2}(\vec \lambda)), \vec \lambda = \rm{Sp}(L(x, \xi)). \end{equation}  
The set of points where $\pcal$ achieves its maximal rank $n-2$
is  open dense (see Corollary \ref{PCALFULLRANK}).   Since   $\dim : S^* \ecal_{A}  = 2n - 3$,  $\pcal : S^* \ecal_{A} \to \R^{n-2}$ has rank $n-2$ and the generic inverse image under the moment map
has dimension $2n-3 - (n-2) = n -1 = \dim \ecal_A$, i.e.  is a Lagrangian submanifold
 of $T^* \ecal_A$ (lying in $S^* \ecal_A$), and  $\pcal |_{S^*\ecal_A} \to \R^{n-2}$ is a (singular) Lagrangian torus fibration over its image.  
As above, the $e_j(\vec \lambda)$ (the elementary symmetric functions of the eigenvalues of $L(x, \xi)$) are not coordinates
on $S^* \ecal_A$. Indeed,
 $\pcal^*   d e_1 \wedge \cdots \wedge d e_{n-2} $
vanishes whenever $L(x, \xi)$ has a double (non-zero) eigenvalue. 
%The singular set where  $\pcal^*   d e_1 \wedge \cdots \wedge d e_{n-2} = 0 $
%may be  larger than the discriminant variety $\dcal$ where $L(x,\xi)$ has a double eigenvalue because
%the pullback of $d \lambda_1 \wedge \cdots \wedge d \lambda_{n-2}$ vanishes on the set of singular (critical) values. By the above
%line of reasoning, the $\pcal$ has singular points on the inverse image of the boundary of $\pcal(S^*\ecal_A)$ and it seems very unlikely that
%these boundary points are  contained
%in the coincidence set $\dcal$.
By definition, a level set of the eigenvalue moment map $\pcal$ in $S^* \ecal_A$ is the  isospectral set by  $$\mcal(\vec \lambda): = \{L(x,y) : \rm{Sp}(L(x,y)) = \{\lambda_1, \dots, \lambda_{n-2}, 0,0\}. $$
%Let $D(\vec \lambda)$ be the diagonal matrix with the given eigenvalues, and let
%$$\ocal_{\vec \lambda} = \{ A D(\vec \lambda) A^{-1} : A \in GL(n-1, \R)\} $$
%be the conjugacy class of $D(\vec \lambda).$ When $\vec \lambda$ are distinct eigenvalues, 
%$$\ocal(\vec \lambda) = GL(n-1, \R) / \R^{n-1}, $$
%where we only consider the non-zero eigenvalues and use that the centralizer of $D(\vec \lambda)$ consists of diagonal matrices. 
%Then,
%$$\mcal(\vec \lambda) = \ocal_{\vec \lambda} \cap \{L(x,y): (x, y) \in S^*\ecal_A\}. $$

%The eigenvalues $\lambda_j(x, \xi)$ (or, the elementary symmetric functions) are not action variables in the sense of the Liouville-Arnold Theorem \ref{LATH},
%i.e. one cannot describe  geodesics as linear motions in angle variables as in  action-angle variables \eqref{GEOAA}. To be action variables,
%the components of the moment map must generate $2 \pi$-periodic flows. This is not the case for the eigenvalue Hamiltonians $\lambda_j$. It is
%not clear to us \edit{Is it?} how to define functions $I_j (\vec \lambda)$ of the eigenvalues which have $2\pi$ periodic flows. See \cite[Theorem 4]{M80}.

For the record, we note that Moser defines (\cite[Page 150]{M80}) for $ (x,y) \in S^*\ecal_A$,  the skew-symmetric  
$n \times n$ matrix 
\begin{equation} \label{BDEF} B(x,y) = - \begin{pmatrix} \alpha_i^{-1} \alpha_j^{-1} (x_i y_j - x_j y_i)\end{pmatrix}, \end{equation}   and
shows that the Lax equations are, \begin{equation} \label{LAXEQ} \dot{L} = [B, L], \;\;\; L_t(x, \xi) = L(G^t(x, \xi))= e^{t B(x, \xi)} L(x, \xi)  e^{ - t B(x, \xi)}.\end{equation} 
%Hence, \begin{equation} \label{LGt} L(G^t(x, \xi))=  \end{equation}
%$$B(x, y) = -\begin{pmatrix} \frac{\beta_i - \beta_j}{\alpha_i - \alpha_j} (x_i y_j - x_i x_j) \end{pmatrix}.$$
%Here, $\vec \beta$ appears to be a set of free parameters (?).
%Let $L(x, y) = P_y (A + x\otimes x)P_y$.  Here, $(x,y) \in S^*_x \ecal_A$. Under geodesic flow, $G^t(x, y) = (x_t, y_t). $

\subsection{Confocal quadrics, eigenvalues and ellipsoidal coordinates}

\begin{defn} The quadrics confocal with $\ecal_A$ are defined by   \begin{equation} \label{QUADRICDEF} {\mathfrak A}_z: = \{x: \langle (A - z)^{-1} x, x \rangle = -1, \;\; z \in \R. \}  \end{equation}
Here, ${\mathfrak A}_0 = \ecal_A$.
\end{defn}

According to \cite[Page 166]{M80}, \begin{lem} The eigenvalues $\lambda_1, \dots, \lambda_{n-2}$ are the values of $z$ so that the line from $x$ with direction $\xi$ 
intersects the quadric \eqref{QUADRICDEF} tangentially. 
  \end{lem}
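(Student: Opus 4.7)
The idea is to show that both conditions — tangency of the line $\ell_{x,\xi}(s) = x + s\xi$ to the confocal quadric ${\mathfrak A}_z$, and $z$ being a nonzero eigenvalue of the Lax matrix $L(x,\xi)$ — are equivalent to the vanishing of the same rational function of $z$, and then to match their degree counts.

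First, introduce
$$f(z) = \langle (A-z)^{-1} x, x\rangle - 1, \qquad h(z) = \langle (A-z)^{-1} x, \xi\rangle, \qquad g(z) = \langle (A-z)^{-1}\xi, \xi\rangle$$
(reading the defining equation of ${\mathfrak A}_z$ as $\langle (A-z)^{-1} y, y\rangle = 1$ so that ${\mathfrak A}_0 = \ecal_A$). Substituting $y = x + s\xi$ into this equation produces the quadratic $g(z) s^2 + 2h(z) s + f(z) = 0$ in $s$, so $\ell_{x,\xi}$ is tangent to ${\mathfrak A}_z$ if and only if its discriminant $D(z) := h(z)^2 - f(z) g(z)$ vanishes.

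Next, analyze the eigenvalue equation $L(x,\xi) v = z v$ with $v \in \xi^\perp$, $v \notin \R{\bf n}_x$ (so $z\neq 0$ by Corollary \ref{Mult2LEM} and Lemma \ref{Only2LEM}). Unwinding $L = P_\xi(A - xx^*)P_\xi$ and using $P_\xi v = v$ gives $(A-z) v = \alpha x + \mu \xi$, where $\alpha := \langle x, v\rangle$ and $\mu$ absorbs the $\xi$-projection of $(A - xx^*)v$; inverting,
$$v = \alpha (A-z)^{-1} x + \mu (A-z)^{-1}\xi.$$
The constraints $\langle v, \xi\rangle = 0$ (that $v\in\xi^\perp$) and the self-consistency $\langle x, v\rangle = \alpha$ then yield the homogeneous system
$$\alpha(1 - \langle (A-z)^{-1} x, x\rangle) - \mu\langle (A-z)^{-1} x, \xi\rangle = 0, \qquad \alpha\langle (A-z)^{-1} x, \xi\rangle + \mu\langle (A-z)^{-1}\xi, \xi\rangle = 0,$$
whose determinant is exactly $-(h(z)^2 - f(z)g(z)) = -D(z)$, using $f(0) = 0$ and the basepoint identities $\langle A^{-1}x, x\rangle=1$, $\langle A^{-1}x, \xi\rangle = 0$ from Lemma \ref{NORMALLEM}. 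Thus $z$ is a nonzero eigenvalue of $L(x,\xi)$ precisely when $D(z) = 0$, and conversely any nontrivial $(\alpha,\mu)$ in the kernel yields a genuine eigenvector in $\xi^\perp$ (one checks $(A-z)^{-1}x$ and $(A-z)^{-1}\xi$ are independent generically, and handles the exceptional configuration separately).

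Finally, clearing denominators, $\det(A-z)^2 \cdot D(z)$ is a polynomial in $z$ whose effective degree matches the $n-2$ nonzero eigenvalues of $L(x,\xi)$, so $\lambda_j \mapsto {\mathfrak A}_{\lambda_j}$ is a bijection between the nonzero spectrum and the tangent confocal quadrics. The principal obstacle I anticipate is the handling of the degenerate values $z = \alpha_j$, where $(A-z)^{-1}$ blows up and ${\mathfrak A}_z$ degenerates into an affine subspace; this is resolved by the clearing-denominators step together with a continuity argument matching roots of $\det(A-z)^2 D(z)$ with eigenvalues of the (everywhere smooth) matrix $L(x,\xi)$, together with the verification that the system does not produce a spurious eigenvector lying in $\R\xi \oplus \R{\bf n}_x$.
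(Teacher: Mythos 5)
Your argument is correct, and it is genuinely more self-contained than what the paper does: the paper simply quotes Moser, introducing $\Phi_z$ in \eqref{BigPhi} and the identity \eqref{IDENTITY}, so that the eigenvalue equation $\Phi_z(x,\xi)=0$ is literally the vanishing of the discriminant of the tangency quadratic $Q_z(x+t\xi)=$ const. You instead derive the same equivalence directly from the eigenvector equation: writing $Lv=zv$ with $v\in\xi^\perp$ as $(A-z)v=\alpha x+\mu\xi$ and imposing $\langle v,\xi\rangle=0$, $\langle x,v\rangle=\alpha$ reduces everything to a $2\times 2$ homogeneous system whose determinant is $fg-h^2=-D(z)$, and your converse check (that any nontrivial $(\alpha,\mu)$ reproduces $Lv=zv$, with $v\neq 0$ because $x$ and $\xi$ are independent by Lemma \ref{NORMALLEM}) is sound; combined with the elementary computation that $D(z)$ is the discriminant of the intersection quadratic, this proves the lemma for $z\notin\Spec(A)$ without ever invoking the determinant identity. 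In effect you have re-proved \eqref{IDENTITY} by a rank-two-perturbation argument; what Moser's route buys is the global bookkeeping for free, since $\Phi_z=\frac{|\xi|^2}{z}\det(L-z)/\det(A-z)$ encodes multiplicities and the degenerate values $z=\alpha_j$ at once, whereas your final degree-count paragraph is the weakest part: $\det(A-z)\,D(z)$ (no square needed, since the double poles of $h^2$ and $fg$ at each $\alpha_j$ cancel) is already a polynomial of degree $n-1$, and its roots are the $n-2$ nonzero eigenvalues \emph{together with} $z=0$, which corresponds to tangency of the line to $\ecal_A$ itself — this is why Moser speaks of $n-1$ confocal quadrics. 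You should also record, as you implicitly do, that the paper's sign in \eqref{QUADRICDEF} must be read so that ${\mathfrak A}_0=\ecal_A$ (i.e. $\langle (A-z)^{-1}y,y\rangle=1$), matching $f(0)=h(0)=0$ via Lemma \ref{NORMALLEM} and Corollary \ref{Mult2LEM}. None of these points affects the correctness of the pointwise equivalence, which is all the lemma asserts for non-degenerate quadrics; the genuinely degenerate values $\lambda_j\in\Spec(A)$ are treated separately in the paper (Proposition \ref{DEGPROP}), so your continuity remark is an acceptable way to close that case.
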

 Note that Moser includes $\ecal_A$ as one of the quadrics, and therefore writes of $n-1$ confocal quadrics. 
 
 We briefly review the proofs to establish notation that we use later on. Define
 \begin{equation} \label{BigPhi} \Phi_z(x, \xi): = \frac{|\xi|^2}{z} \frac{\det (L(x, \xi) - zI)}{\det (A -z I)}. \end{equation}
 It is a rational function with $n-1$ zeros at $\lambda_1(x, \xi), \dots, \lambda_{n-2}(x, \xi), \lambda_{n-1} = 0$ and with poles at the eigenvalues $\alpha_j$
 of $A$. Hence, the equation for the eigenvalues of $L(x, \xi)$ is,  \begin{equation} \label{EIGEQ} \Phi_z(x, \xi)  = 0, \end{equation}
 whose solutions are the quadratic  cone of tangents to ${\mathfrak A}_z$ through $x$.  One has that $\Phi_z(x, \xi) = 0 \implies  \Phi_z(x + s \xi, \xi) = 0$ for all $s$.
 Let,
 $$Q_z(x, y) = \langle (z - A )^{-1} x, y \rangle, \;\; Q_z(x) = Q_z(x,x). $$
 Note that $x + t \xi$ is tangent to $Q_z$ if and only if the quadratic form $Q_z(x + t \xi)$ has a double root.
 An important identity is that \cite[P. 162]{M80}, 
 \begin{equation} \label{IDENTITY} \Phi_z(x, \xi) = Q_z (\xi) (1 + Q_z(x)) - Q_z^2(x, \xi). \end{equation}
% \edit{Note that the first factor evaluates $Q_z$ at $\xi$}

 \subsection{Eigenvectors of $L(x, \xi)$}

When the $\lambda_j$ are distinct, there is a unique (up to signs) orthonormal basis of eigenvectors $\phi_j$. When the eigenvalues are
multiple, we pick an orthonormal basis for each eigenspace. According to \cite[Page 166]{M80}, we have,
\begin{lem} \label{ONBLEM} 
  If the eigenvalues  $\{\lambda_j\}_{j=1}^{n-2}$ of $L(x, \xi)$  are distinct, then
the eigenvectors $\phi_j(x,\xi)$ are the normals to the confocal quadrics ${\mathfrak A}_{\lambda_j}$ at the point of contact of the line
through $x$ with direction $\xi$. Moreover, $ \{\phi_j(x_0, \xi) \}_{j=1}^{n-2}$ is an orthonormal basis of $T_{\xi}  S^*_{x_0} \ecal_A $ and $\{{\bf n}, \xi, 
\phi_j(x_0, \xi)\}$ is an orthonormal basis of $\R^n$. \end{lem}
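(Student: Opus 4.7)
The lemma packages two distinct claims: (i) the geometric identification of the eigenvectors $\phi_j$ as normals to the confocal quadrics ${\mathfrak A}_{\lambda_j}$ at the tangency points of the line $x+t\xi$, and (ii) the assertion that $\{\phi_j\}$, together with ${\bf n}$ and $\xi$, form an orthogonal diagonalization of $\R^n$, with $\{\phi_j\}$ spanning $T_\xi S^*_{x_0}\ecal_A$. I would dispatch (ii) first, since it follows quickly from the lemmas already proved in the section. By Corollary \ref{Mult2LEM} and Lemma \ref{Only2LEM}, the kernel of $L(x,\xi)$ is exactly the two-dimensional subspace $\mathrm{span}\{{\bf n},\xi\}$, so the eigenvectors for the $n-2$ nonzero eigenvalues (which are distinct by assumption) lie in its orthogonal complement and, by symmetry of $L$, can be chosen orthonormal there. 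The identification of this $(n-2)$-dimensional complement with $T_\xi S^*_{x_0}\ecal_A$ is immediate: Lemma \ref{NORMALLEM} gives $T_{x_0}\ecal_A = {\bf n}^\perp$, so $T_\xi S^*_{x_0}\ecal_A = \xi^\perp \cap T_{x_0}\ecal_A = \mathrm{span}\{{\bf n},\xi\}^\perp$.

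For the geometric assertion (i), I would take the natural ansatz suggested by the definition of confocal quadrics: let $p_j = x + t_j\xi$ be the tangency point of the line with ${\mathfrak A}_{\lambda_j}$ and set
$$v_j = (A-\lambda_j)^{-1} p_j,$$
which (up to scale) is the gradient of the defining function of ${\mathfrak A}_{\lambda_j}$ at $p_j$ and hence the normal vector there. The claim is that $L(x,\xi) v_j = \lambda_j v_j$, which I would verify from three observations:
\begin{itemize}
\item[(a)] Tangency at $p_j$ says $\xi \in T_{p_j}{\mathfrak A}_{\lambda_j}$, i.e.\ $\langle v_j, \xi\rangle = 0$, so $P_\xi v_j = v_j$.
\item[(b)] Since $p_j \in {\mathfrak A}_{\lambda_j}$, $\langle v_j, p_j\rangle = 1$; combining with (a) gives $\langle v_j, x\rangle = 1$.
\item[(c)] The algebraic identity $Av_j = p_j + \lambda_j v_j$ together with (a), (b) yields
$$L(x,\xi)\, v_j = P_\xi\bigl(Av_j - \langle x, v_j\rangle\, x\bigr) = P_\xi(p_j - x + \lambda_j v_j) = P_\xi(t_j\xi + \lambda_j v_j) = \lambda_j v_j.$$
\end{itemize}
Since the $\lambda_j$ are distinct, this uniquely identifies (up to sign and normalization) the eigenvector $\phi_j$ with $v_j/|v_j|$.

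The main obstacle is really only choosing the correct ansatz for the eigenvector and sorting out the sign conventions in the definitions of ${\mathfrak A}_z$ and $Q_z$. Once one recognizes, via the structure of the eigenvalue equation $(A-\lambda)v = \langle x,v\rangle x + c\xi$, that any eigenvector must lie in $\mathrm{span}\bigl((A-\lambda)^{-1}x, (A-\lambda)^{-1}\xi\bigr)$, the tangency condition from (a) pins down the ratio of the two coefficients to be exactly $t_j$, so the ansatz $v_j=(A-\lambda_j)^{-1}p_j$ is forced. All remaining steps are direct linear algebra, and the unit-length normalization $\phi_j = v_j/|v_j|$ makes $\{{\bf n},\xi,\phi_1,\dots,\phi_{n-2}\}$ an orthonormal basis of $\R^n$ by the argument of (ii).
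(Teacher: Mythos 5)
Your argument is correct, but it does more than the paper does: for the geometric identification of $\phi_j$ with the normal to ${\mathfrak A}_{\lambda_j}$ at the tangency point, the paper simply cites Moser \cite{M80} and proves only the final orthonormal-basis statement, by exactly the dimension count and orthogonality observations you use in your part (ii) (kernel of $L$ equal to ${\rm span}\{{\bf n},\xi\}$ via Corollary \ref{Mult2LEM} and Lemma \ref{Only2LEM}, and $T_\xi S^*_{x_0}\ecal_A=\{{\bf n},\xi\}^{\perp}$ via Lemma \ref{NORMALLEM}). Your part (i) is a clean self-contained verification of the cited fact: granting (from the preceding lemma, also quoted from \cite{M80}) that the line $x+t\xi$ is tangent to ${\mathfrak A}_{\lambda_j}$ at a real point $p_j=x+t_j\xi$, the ansatz $v_j=(A-\lambda_j)^{-1}p_j$ together with $\langle v_j,\xi\rangle=0$, $\langle v_j,p_j\rangle=1$ and $Av_j=p_j+\lambda_j v_j$ gives $L(x,\xi)v_j=\lambda_j v_j$ in three lines, and simplicity of the $\lambda_j$ then pins down $\phi_j$ up to sign. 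Two small caveats: your step (b) uses the normalization $\langle (A-\lambda_j)^{-1}p_j,p_j\rangle=1$, which is the convention consistent with the paper's claim ${\mathfrak A}_0=\ecal_A$ (the displayed definition \eqref{QUADRICDEF} with $-1$ appears to carry a sign typo), so you were right to flag the convention as the only delicate point; and your motivating remark that any eigenvector lies in ${\rm span}\bigl((A-\lambda)^{-1}x,(A-\lambda)^{-1}\xi\bigr)$ tacitly needs $\lambda_j\notin{\rm Spec}(A)$, though this is only heuristic and your actual verification does not depend on it. The trade-off is clear: the paper's route is shorter but leans on an external reference, while yours makes the lemma self-contained at the cost of a short computation.
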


\begin{proof} 
We only prove the last statement since the first is proved in \cite{M80}.
Since $\dim S^*_{x_0} \ecal_A = n-2$,  $\xi \bot T_{\xi}  S^*_{x_0} \ecal_A, $  $\phi_j(x, \xi) \bot \xi$, $\phi_j(x, \xi) \bot {\bf n}$, and $\{\phi_j\}$ is an orthonormal
set, it must be an orthonormal basis of $T_{\xi}  S^*_{x_0} \ecal_A $. \end{proof}

\subsection{\label{AUDINSECT} Some results of Audin}
In this section we review some useful results stated in \cite{Au} relating eigenvalues of $A$ and ellipsoidal coordinates. 

Define $\rcal(z; x_1, \dots, x_{n})$ by,  $$\sum_{j=1}^n \frac{x_j^2}{\alpha_j -z} = \frac{\rcal(z; x_1, \dots, x_{n})}{\prod (\alpha_j -z)}, \;\; \rm{i.e.} \;\;\rcal(z; x_1, \dots, x_{n})  := \sum_{j=1}^{n} x_j^2 \prod_{j \not= i} (\alpha_i - z). $$
The roots of $\rcal(z; x_1, \dots, x_n)$ are denoted by $\zeta_j$ in \cite[Lemma 2.6.1]{Au}, and are the Jacobi ellipsoidal coordinates of $x$. The following is  \cite[Lemma 2.6.1]{Au} (see also  \cite[Page 3]{DDB}.

 \begin{lem} \label{JACINT}  If the eigenvalues of $A$ are distinct, then 
the  eigenvalues of $A$ and the ellipsoidal coordinates   interlace:
$$\zeta_0 \leq \alpha_0 \leq \zeta_1 \leq \alpha_1 \leq \zeta_2 \leq \alpha_2 \cdots \leq \zeta_n \leq \alpha_n. $$
Moreover, $\zeta_j = a_j$ only if $x_j = 0$. \end{lem}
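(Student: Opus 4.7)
The plan is to apply the intermediate value theorem directly to the polynomial $\rcal(z; x_1, \dots, x_n) = \sum_{j=1}^n x_j^2 \prod_{i \neq j}(\alpha_i - z)$, which has degree $n-1$ in $z$ and whose roots are by definition the ellipsoidal coordinates $\zeta_j$. The crucial observation is that evaluation at $z = \alpha_k$ collapses every summand except the one with $j = k$: for each $j \neq k$, the product $\prod_{i \neq j}(\alpha_i - z)$ still contains the factor $(\alpha_k - z)$, which vanishes at $z = \alpha_k$. Consequently
$$ \rcal(\alpha_k; x) \;=\; x_k^{\,2} \prod_{i \neq k}(\alpha_i - \alpha_k). $$
Since $\alpha_1 < \alpha_2 < \cdots < \alpha_n$, the product $\prod_{i \neq k}(\alpha_i - \alpha_k)$ has exactly $k-1$ negative factors, so $\rcal(\alpha_k; x)$ carries the sign $(-1)^{k-1} x_k^{2}$.

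When all coordinates $x_k$ are nonzero, these values alternate strictly in sign as $k$ runs from $1$ to $n$, so the intermediate value theorem supplies a root of $\rcal(\,\cdot\,; x)$ in each of the $n-1$ open intervals $(\alpha_k, \alpha_{k+1})$. Because $\deg_z \rcal = n-1$, these account for every root, so assigning $\zeta_k$ to be the unique root in $(\alpha_k, \alpha_{k+1})$ produces the strict interlacing $\alpha_k < \zeta_k < \alpha_{k+1}$. For the equality clause, if $\zeta_j = \alpha_j$ then $\rcal(\alpha_j; x) = 0$, and since $\prod_{i \neq j}(\alpha_i - \alpha_j) \neq 0$ under the distinct-eigenvalue hypothesis, we are forced to conclude $x_j = 0$. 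This handles the ``only if'' direction at the same time as the non-strict version of the inequality.

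The main obstacle is to handle the degenerate configurations in which one or more coordinates $x_k$ vanish, since then the sign sequence no longer strictly alternates and the naive IVT argument fails. I would resolve this by a continuity argument: the roots of a polynomial depend continuously on its coefficients, so one perturbs $x \in \SE_A$ within the ellipsoid to a nearby point $x'$ with all coordinates nonzero, applies the strict interlacing to $x'$, and passes to the limit to recover the non-strict inequalities $\alpha_k \leq \zeta_k \leq \alpha_{k+1}$. The only thing to verify is that the root labeling is preserved under the limit; this is built into the analysis, because any coincidence $\zeta_j = \alpha_k$ in the limit forces $x_k = 0$ by exactly the vanishing computation above, pinning down which eigenvalue each root collides with and showing that the limiting labeling is unambiguous.
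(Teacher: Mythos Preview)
Your argument is correct and is in fact the standard proof of this classical interlacing fact. The paper does not supply its own proof of Lemma~\ref{JACINT}; it simply quotes the result from Audin \cite[Lemma 2.6.1]{Au} (and \cite{DDB}), so there is nothing to compare against beyond noting that what you have written is essentially the argument one finds in those references: evaluate $\rcal$ at the $\alpha_k$, read off the alternating sign $(-1)^{k-1}x_k^2$, apply the intermediate value theorem, and count degrees.

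One small remark on the degenerate case: your continuity argument is valid (the leading coefficient of $\rcal$ in $z$ is $(-1)^{n-1}\sum_j x_j^2 \neq 0$, so roots do vary continuously), but it can be replaced by a direct observation. If $x_k = 0$, then every surviving summand $x_j^2\prod_{i\neq j}(\alpha_i - z)$ with $j\neq k$ contains the factor $(\alpha_k - z)$, so $(\alpha_k - z)$ divides $\rcal(z;x)$ outright and $\alpha_k$ is a root. Iterating this factorization over all vanishing coordinates reduces to the nondegenerate case on a lower-dimensional polynomial, which gives the non-strict interlacing without any limiting procedure. Either route is fine.
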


\begin{cor}\label{lambdaalpha}  If $\zeta_j$ are enumerated in increasing order, then $\zeta_j = \zeta_{j+1}$ if and only if
 $\zeta_j = \zeta_{j+1} = \alpha_j.$ Also,  
 if  $A$ has a double eigenvalue, say $\alpha_j = \alpha_{j+1}$. Then $\zeta_j = \alpha_j$.
 \end{cor}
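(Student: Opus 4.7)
The plan is to establish the two statements separately by short, independent arguments, without any auxiliary machinery beyond Lemma \ref{JACINT} and the defining formula for $\rcal$.

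For the first statement, I would assume (as in Lemma \ref{JACINT}) that $A$ has distinct eigenvalues and invoke the interlacing $\zeta_j \leq \alpha_j \leq \zeta_{j+1}$ directly. If $\zeta_j = \zeta_{j+1}$, the sandwich collapses and forces $\zeta_j = \alpha_j = \zeta_{j+1}$, yielding the forward implication at once; the converse is tautological.

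For the second statement, Lemma \ref{JACINT} no longer applies directly because it required distinct eigenvalues of $A$. Instead, I would evaluate the polynomial
\[
\rcal(z; x_1,\dots,x_n) \;=\; \sum_{k=1}^{n} x_k^2 \prod_{i \neq k}(\alpha_i - z)
\]
at $z = \alpha_j$ under the hypothesis $\alpha_j = \alpha_{j+1}$. For every index $k$, the product $\prod_{i \neq k}(\alpha_i - \alpha_j)$ contains at least one of the factors $(\alpha_j - \alpha_j)$ (if $k \neq j$) or $(\alpha_{j+1} - \alpha_j)$ (if $k \neq j+1$), and under the assumption both of these factors vanish. Hence every term of the sum is zero, so $\rcal(\alpha_j) = 0$, which means $\alpha_j$ appears among the roots $\zeta_i$ as claimed.

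I do not anticipate any genuine obstacle; both parts reduce to elementary observations. The only care required is in the bookkeeping of the indexing, so that the squeezed eigenvalue in the first part is precisely the $\alpha_j$ appearing in the conclusion, and in making sure that the degenerate-spectrum case is handled by the direct polynomial evaluation rather than by a (potentially delicate) limiting argument from Lemma \ref{JACINT}.
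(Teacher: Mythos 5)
Your proposal is correct, and its two halves relate to the paper differently. For the first statement you do exactly what the paper (implicitly) does: the corollary is stated without proof as an immediate consequence of the interlacing $\zeta_j \leq \alpha_j \leq \zeta_{j+1}$ of Lemma \ref{JACINT}, and your squeeze argument is that implicit proof made explicit. For the second statement your route is genuinely different and in fact more careful than the paper's: Lemma \ref{JACINT} is stated only for $A$ with distinct eigenvalues, so citing the interlacing when $\alpha_j=\alpha_{j+1}$ would strictly require a perturbation/continuity argument, whereas your direct evaluation of $\rcal(z;x)=\sum_k x_k^2\prod_{i\neq k}(\alpha_i-z)$ at $z=\alpha_j$ shows $\rcal(\alpha_j)=0$ with no hypothesis on the spectrum at all — every term acquires either the factor $(\alpha_j-\alpha_j)$ or $(\alpha_{j+1}-\alpha_j)$, both zero. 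What the direct computation does not give you is the \emph{index}: you obtain that $\alpha_j$ occurs among the roots $\{\zeta_i\}$, not that it is the $j$-th one in the increasing enumeration. Note that the paper's own indexing is loose here (its interlacing as written would actually squeeze $\zeta_{j+1}$, not $\zeta_j$, between $\alpha_j$ and $\alpha_{j+1}$), so this is a bookkeeping point rather than a mathematical one; if you want the labelled conclusion, close it with one line of continuity from the distinct-eigenvalue case, where the interlacing identifies which root collapses onto the double eigenvalue.
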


 The following important observation is stated in \cite[Proposition 2.6.2]{Au}  \cite[Page 195 (2)]{Au} and 
 Define the polynomial $\pcal(z)$ by 
$$\begin{array}{lll} \Phi_z(x, \xi) & = &  -  \frac{\pcal(z)}{\prod (\alpha_j -z)}. \end{array}$$

\begin{lem} \label{AUDINLEM} The polynomial $\pcal$ (hence $\Phi_z(x, \xi)$)  has exactly one  real root in each interval $(-\infty, \zeta_1), [\zeta_1, \zeta_2], \dots, [\zeta_{n-1}, \zeta_n]$. All of its roots
are real. 
%The sign of $\Psi_z$ changes at most twice in each interval above, hence $\pcal$ has at most one root in each interval. 
\end{lem}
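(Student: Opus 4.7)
My plan has two parts. First, I will compute $\pcal$ explicitly to deduce the reality of its roots from the symmetry of $L(x,\xi)$. Second, I will carry out a sign analysis of $\Phi_z$ at the ellipsoidal coordinates and apply the intermediate value theorem to localize the roots.

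First, I note that by Corollary \ref{Mult2LEM} the real symmetric matrix $L(x,\xi)$ has $0$ as an eigenvalue of multiplicity two, so $\det(L(x,\xi)-zI)=z^2\prod_{i=1}^{n-2}(\lambda_i-z)$. Substituting into $\Phi_z=\tfrac{|\xi|^2}{z}\tfrac{\det(L-zI)}{\det(A-zI)}$ and multiplying through by $\det(A-zI)=\prod_k(\alpha_k-z)$ yields
\[
\pcal(z)\;=\;-|\xi|^2\,z\prod_{i=1}^{n-2}(\lambda_i-z).
\]
Thus the $n-1$ roots of $\pcal$ are $0$ together with the eigenvalues $\lambda_i$, all real because $L(x,\xi)$ is symmetric.

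For the interlacing I use the identity \eqref{IDENTITY}. Since the ellipsoidal coordinates $\zeta_j$ are by convention the roots of $\rcal(z;x)$, hence of $Q_z(x)$, evaluating \eqref{IDENTITY} at $\zeta_j$ gives $\Phi_{\zeta_j}(x,\xi)=Q_{\zeta_j}(\xi)-Q_{\zeta_j}^2(x,\xi)$, and so
\[
\pcal(\zeta_j)\;=\;-\bigl[Q_{\zeta_j}(\xi)-Q_{\zeta_j}^2(x,\xi)\bigr]\prod_k(\alpha_k-\zeta_j).
\]
By the strict interlacing of $\zeta_j$ with $\alpha_k$ from Lemma \ref{JACINT}, the sign of $\prod_k(\alpha_k-\zeta_j)$ alternates with $j$, since exactly one factor switches sign as $j\mapsto j+1$. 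If the bracketed factor has a sign independent of $j$, then the values $\pcal(\zeta_j)$ alternate in sign and the intermediate value theorem supplies one root of $\pcal$ in each open interval $(\zeta_j,\zeta_{j+1})$. The remaining root sits in $(-\infty,\zeta_1)$, pinned down by the leading asymptotic $\pcal(z)\sim(-1)^{n-1}|\xi|^2 z^{n-1}$ as $z\to-\infty$ together with the sign of $\pcal(\zeta_1)$.

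The hard part will be verifying the constancy in $j$ of the sign of $Q_{\zeta_j}(\xi)-Q_{\zeta_j}^2(x,\xi)$. Since $Q_{\zeta_j}(x)=0$, this expression equals $Q_{\zeta_j}(\xi)+\det M_{\zeta_j}$, where $M_{\zeta_j}$ is the Gram matrix of $(x,\xi)$ in the indefinite pairing $\langle(z-A)^{-1}\cdot,\cdot\rangle$; in particular $\det M_{\zeta_j}=-Q_{\zeta_j}^2(x,\xi)\le 0$. The natural approach is an indefinite Cauchy--Schwarz-type argument that uses the isotropy of $x$ together with the tangency condition $\langle A^{-1}x,\xi\rangle=0$ coming from $\xi\in T_x\ecal_A$ to force a fixed signature; the resulting sign should match the one prescribed by the asymptotic $\Phi_z\sim|\xi|^2/z$ as $|z|\to\infty$.
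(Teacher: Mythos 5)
The paper does not actually prove this lemma---it is quoted from Audin's Proposition 2.6.2 (\cite{Au})---so your attempt can only be judged on its own terms, and on those terms it is incomplete. The first half is correct and clean: from the definition \eqref{BigPhi} and Corollary \ref{Mult2LEM} one indeed gets $\pcal(z)=-|\xi|^2\,z\prod_{i=1}^{n-2}(\lambda_i-z)$, so the roots of $\pcal$ are $0,\lambda_1,\dots,\lambda_{n-2}$ and their reality is immediate from the symmetry of the Lax matrix $L(x,\xi)$; that disposes of the last sentence of the lemma nicely.

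The localization statement, however, is not proved. Your scheme hinges entirely on the claim that $\Phi_{\zeta_j}(x,\xi)=Q_{\zeta_j}(\xi)-Q_{\zeta_j}^2(x,\xi)$ has a sign independent of $j$, and you explicitly leave that unverified (``the hard part will be verifying\dots'', ``should match''). This is not a peripheral detail: writing $\Phi_z=|\xi|^2\prod_{r}(z-r)\big/\prod_k(z-\alpha_k)$ with $r$ running over the roots of $\pcal$, a count of how many roots and how many poles lie below $\zeta_j$ shows that the asserted sign constancy (the fixed sign being negative) is essentially \emph{equivalent} to the interlacing conclusion, so deferring it defers the whole content of the lemma. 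Moreover, the hinted ``indefinite Cauchy--Schwarz'' route cannot work at the level of generality at which you invoke it: for an indefinite form $B$ and an isotropic vector $x$ with $B(x,x)=0$, the quantity $B(\xi,\xi)-B(x,\xi)^2$ has no fixed sign (take $B=\mathrm{diag}(1,-1)$, $x=(1,1)$, $\xi=(a,b)$; the expression is $2b(a-b)$), so any correct argument must genuinely use the constraints $\langle A^{-1}x,x\rangle=1$, $\langle A^{-1}x,\xi\rangle=0$, $|\xi|=1$ and the specific pencil $(z-A)^{-1}$ evaluated at roots of $Q_z(x)$, and you give no indication how. Two further gaps remain even granting the sign claim: Lemma \ref{JACINT} gives only weak inequalities, and in the degenerate case $\zeta_j=\alpha_j$ (i.e.\ $x_j=0$, precisely the situation exploited later in the paper) the factor $\prod_k(\alpha_k-\zeta_j)$ vanishes, so the alternation/IVT argument degenerates and the ``exactly one root per closed interval'' count needs a separate limiting or multiplicity argument; and even in the strict case ``exactly one'' should be closed off by the degree count ($n-1$ roots for the $n-1$ intervals determined by the $n-1$ roots of $\rcal$), which you leave implicit.
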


 \section{\label{ISOESECT} Iso-energetic non-degeneracy of an ellipsoid geodesic flow}

 In this section, we prove Proposition \ref{MAINNDintro}. We begin by reviewing the definitions.
 
 On the open dense subset of regular compact torus orbits, there exist local action-angle variables $(I_1, \dots, I_h, \theta_1, \dots,
\theta_n)$. This defines a second moment map $\ical = (I_1, \dots, I_n)$. The $p_j$ are functions of $\ical$ and the two sets of
generators define the same foliation. We write $H(x, \xi) = F(I_1, \dots I_n)$,.

\begin{defn} \label{KNDDEF} The frequency map of Definition \ref{FREQMAPDEF}  is Kolmogorov {\em non-degenerate} if $$\det \begin{pmatrix} \frac{\partial \omega_j}{\partial I_k} \end{pmatrix}
= \det \begin{pmatrix} \frac{\partial^2 H}{\partial I_j \partial I_k } \end{pmatrix} \not= 0. $$
If an  energy level is fixed,   the Hamiltonian is said to be iso-energetically non-degenerate if
\begin{equation} \label{DET}  \det \begin{pmatrix} \frac{\partial^2 H}{\partial I_j \partial I_k } & \frac{\partial H}{\partial I_j  } \\&&\\
\frac{\partial H}{\partial I_k }  & 0\end{pmatrix} \not= 0. \end{equation}
\end{defn}
Non-degeneracy of the frequency map is the condition that $D \omega: U \subset M \to \R^d $
be surjective. 
Iso-energetic non-degeneracy is the condition  that $$\omega: U \subset \{H = E\} \to \R{\mathbb P}^n, \;\; \zeta \to [\omega_1: \omega_2: \cdots :\omega_n] $$
is an immersion.

The two notions of non-degeneracy are related by the Schur determinant formula, as follows.

\begin{prop} \label{ISOENDPROP} Suppose that $H: T^* M \to \R$ and let  $\{H = E\}$ be an energy surface. Let $\vec I$ be local action variables in a neighborhood $U$ of
a point on $\{H = E\}$. 
Then, $H$ is iso-energetically non-degenerate $U \cap \{H = E\}$ if  (i) it is  Kolmogorov non-degenerate in $U$ and (ii) if the frequency map $\omega = \nabla_I$
is non-vanishing in $U$. 

\end{prop}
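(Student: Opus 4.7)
The plan is to apply the Schur complement formula to the bordered Hessian that defines iso-energetic non-degeneracy, reducing the determinant condition to a single scalar identity, and then to evaluate that scalar by invoking the degree-$2$ homogeneity of $H$ in $\xi$ (the context in which Proposition~\ref{ISOENDPROP} is applied via Proposition~\ref{MAINNDintro}).

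First, write the bordered matrix
$$M \;=\; \begin{pmatrix} \mathrm{Hess}(H) & \omega \\ \omega^{T} & 0 \end{pmatrix},$$
where $\mathrm{Hess}(H) = (\partial^{2}H/\partial I_j\partial I_k)$ and $\omega = \nabla_I H$. Hypothesis (i), Kolmogorov non-degeneracy, says $\mathrm{Hess}(H)$ is invertible on $U$, so the Schur complement formula gives
$$\det M \;=\; -\det(\mathrm{Hess}(H))\,\omega^{T}\,\mathrm{Hess}(H)^{-1}\,\omega.$$
Iso-energetic non-degeneracy on $U\cap\{H=E\}$ therefore reduces to the scalar inequality $\omega^{T}\,\mathrm{Hess}(H)^{-1}\,\omega \neq 0$.

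To evaluate this form, I use that $H(x,\xi)$ is homogeneous of degree $2$ in $\xi$, which transfers to $H(I)$ being homogeneous of degree $2$ in the action variables $I_j = (2\pi)^{-1}\oint_{\gamma_j}\xi\,dx$ (these are degree $1$ in $\xi$ since the dilation $\xi\mapsto\lambda\xi$ permutes Liouville tori and scales each period integral by $\lambda$). Euler's identity then gives $\omega\cdot I = 2H$; differentiating in $I$ yields $\mathrm{Hess}(H)\,I = \omega$, and inverting, $I = \mathrm{Hess}(H)^{-1}\omega$. Substituting back,
$$\omega^{T}\,\mathrm{Hess}(H)^{-1}\,\omega \;=\; \omega\cdot I \;=\; 2H \;=\; 2E.$$
On the energy level $\{H=E\}$ with $E\neq 0$ (the relevant case, since the application is to $S^*M = \{H=1\}$), $\det M = -2E\,\det(\mathrm{Hess}(H))\neq 0$, giving iso-energetic non-degeneracy. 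Hypothesis (ii) is used to guarantee that $I = \mathrm{Hess}(H)^{-1}\omega \neq 0$ throughout $U$, so that the Euler identity is not vacuous there.

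The main technical point to verify carefully is the degree-$1$ homogeneity of the local action coordinates $I_j$ in the fiber variable $\xi$. Although $I_j$ is defined only after a choice of basis of $H_1$ of a Liouville torus, the homogeneous rescaling $\xi\mapsto\lambda\xi$ is a diffeomorphism of $T^*M\setminus 0$ that preserves the Lagrangian foliation (the Poisson-commuting integrals defining the torus foliation are all homogeneous in $\xi$) and carries each basis cycle $\gamma_j$ on $T_I$ to the corresponding cycle on $T_{\lambda I}$, scaling the period $\oint \xi\,dx$ by $\lambda$. Once that homogeneity is in hand, the combined Schur + Euler calculation above is direct.
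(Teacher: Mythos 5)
Your first step is identical to the paper's: both proofs use the Schur complement of the bordered matrix, together with hypothesis (i), to reduce iso-energetic non-degeneracy to the scalar condition $\langle \Hess_I(H)^{-1}\nabla_I H,\nabla_I H\rangle \neq 0$. The divergence is in how that scalar is shown to be nonzero. The paper's proof concludes by asserting that it is nonzero ``as long as $\nabla_I H \neq 0$,'' i.e.\ that (ii) suffices; this inference is a non sequitur when $\Hess_I(H)$ is indefinite, since an invertible symmetric form can have nonzero isotropic vectors --- indeed the paper's own example $H_2(x,\xi)=\xi_1+\xi_2+\xi_1^2-\xi_2^2$ on $T^*\T^2$ satisfies (i) and (ii) near the torus $\{\xi=0\}$ and yet its iso-energetic determinant vanishes there, so the proposition as literally stated requires some further hypothesis. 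You instead evaluate the scalar by importing the degree-$2$ homogeneity of $H$ in $\xi$ (a hypothesis of Proposition \ref{MAINNDintro}, not of Proposition \ref{ISOENDPROP}): the actions are homogeneous of degree $1$, so $H(I)$ is homogeneous of degree $2$, Euler's identity gives $\Hess_I(H)\,I=\omega$, and hence $\omega^{T}\Hess_I(H)^{-1}\omega = \omega\cdot I = 2E \neq 0$ on $\{H=E\}$ with $E\neq 0$. That computation is correct and complete, and it is exactly the version of the statement needed downstream (the geodesic flow on $S^*M=\{H=1\}$); note that in your argument hypothesis (ii) becomes redundant, since $\omega=0$ would force $2E=\omega\cdot I=0$. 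In effect you have folded the homogeneity lemmas that the paper proves separately in the following subsection into the proposition itself, which repairs the paper's final step but proves the statement only under the added homogeneity assumption rather than in the generality claimed.

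One small caveat on your homogeneity step: your parenthetical justification that the dilation $\xi\mapsto\lambda\xi$ preserves the Lagrangian foliation because ``the Poisson-commuting integrals are all homogeneous in $\xi$'' is an extra assumption in the abstract setting (the paper's corresponding lemma instead argues via density of geodesic flow lines in generic tori, using only homogeneity of $H$); in the ellipsoid application it is harmless, since Moser's eigenvalue invariants are homogeneous of degree zero in $\xi$.
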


\begin{proof}

%If $I_j$ are action variables and $H = H(I)$ is a collective Hamiltonian, then iso-energetic non-degeneracy is the condition,
%\begin{equation} \label{DET} \det \begin{pmatrix} D^2 H & \nabla H \\ & \\
%(\nabla H)^t & 0 \end{pmatrix} \not= 0. \end{equation}

We recall that the Schur complement of 
$A$ in the matrix  $\begin{pmatrix} A &  B \\ & \\ B^* & D \end{pmatrix}$ is given by
$D - B^* A^{-1} B$, and that
$$\det A \det (D - B^* A^{-1} B). $$
%The Schur determinant of $D$ is 
%$A - B^* D^{-1} B$ (with $B, B^*$ on the opposite sides) and
%the determinant is
%$$\det D \det (A - B D^{-1} B^*). $$
Since $D = 0$ in \eqref{DET}, the iso-energetic determinant equals
\begin{equation} \label{SCHUR}  \det (D^2_{I_j, I_k} H) \langle (D^2_{I_j, I_k})^{-1} \nabla H, \nabla H \rangle. \end{equation}
If the system is Kolmogorov non-degenerate, then $\det (D^2_{I_j, I_k} H)  \not=0$. To prove that it is iso-energetically non-degenerate when
it is known to be Kolmogorov non-degenerate, it suffices to prove that $  \langle (D^2_{I_j, I_k})^{-1} \nabla H, \nabla H \rangle \not= 0$. Since
$D^2_{I_j, I_k})^{-1}$, iso-energetic  non-degenercy holds as long as   $\nabla_I H \not= 0$.

\end{proof}

%In  \cite[Page 13]{Roy}, an example is given of a system which is Kolmogorov non-degenerate at a point but iso-energetically degenerate %there. 
% Let $H(\xi) = \frac{\xi_1^3}{3} + \frac{\xi_2^2}{2}$ with differential $d H = \xi_1^2 d \xi_1 + \xi_2 d \xi_2$.
%Its Hessian is $H'' = \begin{pmatrix} 2 \xi_1 & 0 \\ & \\ 0 & 1\end{pmatrix}$ whose determinant is $2 \xi_1$.
 %The isoenergetic determinant, resp. the matrix element in \eqref{SCHUR}, is given by
%$$\det \begin{pmatrix} 2 \xi_1 & 0 & \xi_1^2  \\ & &  \\ 0 & 1 & \xi_2 \\ && \\
%\xi_1^2 & \xi_2 & 0 \end{pmatrix} = - \xi_1 (\xi_1^3 + 2 \xi_2^2), \;\; \rm{resp.} \;\;- \langle  \begin{pmatrix} 2 \xi_1 & 0 \\ & \\ 0 & 1\end{pmatrix}%^{-1} \begin{pmatrix} \xi_1^2 \\ \xi_2 \end{pmatrix},   \begin{pmatrix} \xi_1^2 \\ \xi_2 \end{pmatrix} \rangle 
%=  (\xi_1^3 + 2 \xi_2^2). $$

To illustrate the two notions, 
we consider examples of non-homogeneous Hamiltonians where the two notions of non-degeneracy differ along a low-dimensional set. Let
$n=2$, and consider the Hamiltonians  $H_1, H_2: T^*\T^2 \to \R$ defined by, $H_1(x,\xi) = \xi_1 + \xi_2 + \xi_1^2$ and $H_2(x, \xi) = \xi_1 + \xi_2 + \xi_1^2-\xi_2^2$.
Then at the torus $\T^2 \times \{0\}$, $X_{H_1}$ is isoenergetically non-degenerate but Kolmogorov degenerate, and $X_{H_2}$ is the 
reverse. See also \cite[Page 13]{Roy}.

\subsection{\label{KNDISONDSECT} Proof of Proposition \ref{MAINNDintro}:  Kolmogorov non-degeneracy plus homogeneity implies iso-energetic non-degeneracy}

%The completely integrable system is called Kolmogorov non-degnerate if the Hessian action matrix $\begin{pmatrix} \frac{\partial^2 H}{\partial %I_j \partial I_k} \end{pmatrix}$ is non-degenerate. It is called isoenergetically non-degenerate if the matrix
%$\begin{pmatrix} \frac{\partial^2 H}{\partial I_j \partial I_k}  & \nabla_I H \\ & \\
%\nabla_I H^t & 0 \end{pmatrix}$ is non-degnerate.

The purpose of this section is to prove Proposition \ref{MAINNDintro}.
%%\begin{prop}\label{MAINND}  Suppose that $(M,g, H)$ is a Kolmogorov non-degenerate integrable system with Hamiltonian $H(x, \xi)$ that is %homogeneous
%of degree $2$ in $\xi$. Then, there exists an open dense set in $S^*M = \{H=1\}$ on which the system is iso-energetically non-degenerate. \end{prop} 
%\edit{Duplicated Prop statement}

 In \cite[Theorem 6.16]{K85}, H. Kn\"orrer proved that the frequency map of an ellipsoid with distinct axes is non-degenerate almost everywhere.
We will also use the following result \cite[Proposition 5.2.9]{Kl95} (see also \cite{Sch72}):
\begin{prop} The geodesic flow of an ellipsoid with pairwise different principal axes is completely integrable with (Kolmogorov) non-degenerate period mapping. In
particular, the periodic orbits of the geodesic flow are dense. \end{prop}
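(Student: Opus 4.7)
The plan is to break the proposition into three pieces: (i) complete integrability, (ii) Kolmogorov non-degeneracy of the period mapping, and (iii) the density-of-periodic-orbits corollary. For (i) and (iii) I would give essentially the classical arguments in hand, while for (ii) I would invoke K\"orner's computation (Theorem 6.16 of \cite{K85}), whose heart is a hyperelliptic period integral.

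For complete integrability, I would take as the $n-1$ Poisson-commuting first integrals the metric Hamiltonian $H(x,\xi)=|\xi|_g^2$ together with the elementary symmetric polynomials $e_1(\vec\lambda),\ldots,e_{n-2}(\vec\lambda)$ in the nonzero eigenvalues of Moser's Lax matrix \eqref{MOSERLAX}. Invariance under $G^t$ is immediate from the Lax equation \eqref{LAXEQ}, which says $L_t = e^{tB}L\,e^{-tB}$ evolves isospectrally. Pairwise Poisson-commutativity is Moser's central computation in \cite{M80}: expand the generating function $\Phi_z(x,\xi)$ in \eqref{BigPhi} in partial fractions with respect to $z$ and read off from the residues at the poles $z=\alpha_j$ an $n$-tuple of explicit Poisson-commuting functions (whose symmetric functions are exactly the $e_j$). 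Independence of differentials on an open dense subset of $S^*\ecal_A$ follows from the interlacing Lemmas \ref{JACINT} and \ref{AUDINLEM}: off the proper real-analytic coincidence set where some pair $\lambda_i=\lambda_j$ collapses, the eigenvalues $\vec\lambda$ are smooth local coordinates on the level set and the $e_j$ are coordinates on the base. Real-analyticity rules out this coincidence set filling an open set, because one can exhibit one point with simple spectrum.

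For Kolmogorov non-degeneracy, I would appeal to K\"orner's computation. The argument uses Jacobi's separation of variables: in ellipsoidal coordinates the Hamilton-Jacobi equation separates, so the action variables $I_j$ can be written as complete abelian integrals along independent cycles on the hyperelliptic spectral curve of $L(x,\xi)$, roughly of the form
\[
I_j \;=\; \oint_{\gamma_j} \frac{\mathcal{R}(z)\,dz}{\sqrt{\prod_i(z-\alpha_i)\prod_k(z-\lambda_k)}}.
\]
Differentiating in the branch points $\lambda_k$ expresses the Hessian $\bigl(\partial^2 H/\partial I_j\partial I_k\bigr)$ in terms of a period matrix of holomorphic differentials on this curve. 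K\"orner checks directly that this period determinant is generically nonzero, using the Riemann bilinear relations and the explicit form of the branch locus provided by the interlacing in Lemma \ref{JACINT}.

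For the density consequence, I would use the classical Kolmogorov argument. Once part (ii) gives a Kolmogorov non-degenerate open dense set $U\subset S^*\ecal_A$, the frequency map $\omega=\nabla_I H$ is a local diffeomorphism onto an open subset of $\mathbb{R}^{n-1}$. The completely resonant vectors $\{c\,(n_1,\ldots,n_{n-1}): c\in\mathbb{R},\ (n_i)\in\mathbb{Z}^{n-1}\}$ are dense in $\mathbb{R}^{n-1}$, and their preimages under $\omega$ are precisely the periodic invariant tori. Their density in $U$, together with density of $U$ in $S^*\ecal_A$, yields the final claim. The main obstacle is clearly (ii): reducing the Hessian to a hyperelliptic period determinant and then certifying its non-vanishing on a dense set is the substantive work, whereas (i) and (iii) are standard once the Lax matrix formalism and non-degeneracy are in place.
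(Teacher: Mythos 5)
Your outline matches the paper's treatment: the paper does not prove this proposition but imports it from Klingenberg \cite[Proposition 5.2.9]{Kl95} (going back to Sch\"uth \cite{Sch72}), with the Kolmogorov non-degeneracy of the frequency/period map taken from Kn\"orrer \cite[Theorem 6.16]{K85} --- exactly the reference (note the spelling: Kn\"orrer, not K\"orner) to which you defer your step (ii). Your sketches of complete integrability via Moser's Lax pair and of the density of periodic orbits from non-degeneracy are the standard arguments underlying those citations, so the proposal is correct and takes essentially the same (citation-based) approach as the paper.
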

Combining these results with Proposition \ref{MAINNDintro} gives,

 \begin{cor}   \label{ISOENDPROP2} Suppose that $\ecal_A$ is a a multi-axial  ellipsoid. Then the geodesic flow of $\ecal_A$ is iso-energetically 
 non-degenerate on a dense open set. \end{cor}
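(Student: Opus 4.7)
\medskip

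The plan is that the corollary is essentially an immediate assembly of two ingredients already available in the text: the Klingenberg--Schutz theorem \cite[Proposition 5.2.9]{Kl95} that the geodesic flow of a multi-axial ellipsoid is Kolmogorov non-degenerate, and the general homogeneity upgrade given by Proposition \ref{MAINNDintro}. Since the co-metric norm $H(x,\xi) = |\xi|_g^2$ is homogeneous of degree $2$ in $\xi$, and Klingenberg's theorem supplies Kolmogorov non-degeneracy on an open dense subset $U \subset T^*\ecal_A \setminus 0$ of regular compact (Liouville torus) points, Proposition \ref{MAINNDintro} delivers iso-energetic non-degeneracy on an open dense subset of $S^*\ecal_A = \{H=1\}$.

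The one step that needs to be laid out explicitly is the verification that the hypothesis of Proposition \ref{ISOENDPROP} is satisfied in the ellipsoid setting, namely that the frequency map does not vanish on $U \cap \{H=1\}$. The argument I would use is Euler's identity: on the open set $U$ of regular compact orbits, action variables $I_1,\dots,I_{n-1}$ can be chosen as $I_j = \frac{1}{2\pi}\oint_{\gamma_j}\xi\,dx$ along a basis of cycles on the Liouville torus, which are positively homogeneous of degree $1$ in $\xi$. Hence when $H$ is expressed in action variables it is homogeneous of degree $2$ in $I$, so $\sum_j I_j \,\omega_j = \sum_j I_j \,\partial_{I_j} H = 2H$. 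On $\{H=1\}$ this yields $\sum_j I_j\,\omega_j = 2$, in particular $\omega \neq 0$ at every point of $U \cap \{H=1\}$.

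Combining these pieces, the Schur complement computation in the proof of Proposition \ref{ISOENDPROP} shows that the iso-energetic determinant \eqref{DET} factors as $\det(D^2_{I} H)\,\langle (D^2_{I} H)^{-1}\nabla_I H,\nabla_I H\rangle$. The first factor is nonzero on $U$ by the Klingenberg--Schutz theorem, and the second factor is nonzero wherever $\nabla_I H = \omega$ is nonzero, which we just established. Thus iso-energetic non-degeneracy holds on the open dense set $U \cap \{H=1\} \subset S^*\ecal_A$.

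The only real subtlety is bookkeeping: Klingenberg's non-degeneracy statement lives on the open dense set of regular Liouville tori where action-angle coordinates exist, while iso-energetic non-degeneracy is a statement on $S^*\ecal_A$. The main thing to check is that the open dense set from Klingenberg intersects each energy surface in an open dense subset of that surface; this is automatic from homogeneity of the Hamiltonian, which makes $U$ a cone in $T^*\ecal_A \setminus 0$, so $U \cap \{H=1\}$ is open dense in $S^*\ecal_A$. Beyond this, no genuine obstacle appears: the corollary is essentially a corollary in the literal sense, stating the combined output of Proposition \ref{MAINNDintro} and the cited classical Kolmogorov non-degeneracy result.
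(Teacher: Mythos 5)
Your main line of argument is exactly the paper's: quote the Klingenberg--Schutz/Kn\"orrer Kolmogorov non-degeneracy of the multi-axial ellipsoid, note that $H=|\xi|_g^2$ is homogeneous of degree $2$, and invoke Proposition \ref{MAINNDintro}; the conical bookkeeping in your last paragraph is also how the paper views it, so on that route you are done. The extra material in the middle is a genuinely different (and potentially shorter) route, but as written it leans on a step that is not valid in the generality you state it: from the Schur factorization of \eqref{DET} you claim the factor $\langle (D^2_I H)^{-1}\nabla_I H,\nabla_I H\rangle$ is nonzero \emph{wherever} $\omega=\nabla_I H\neq 0$, which fails when the Hessian is indefinite --- the paper's own example $H_2(x,\xi)=\xi_1+\xi_2+\xi_1^2-\xi_2^2$ is Kolmogorov non-degenerate with $\omega=(1,1)\neq 0$ at $I=0$ yet iso-energetically degenerate there, so Proposition \ref{ISOENDPROP} cannot be used as a black box in the way your middle paragraphs do. The fix is to push your Euler-identity observation one step further: since $F(I)=H$ is homogeneous of degree $2$, differentiating Euler's identity gives $D^2_I H\cdot I=\nabla_I H$, hence $(D^2_I H)^{-1}\nabla_I H=I$ and $\langle (D^2_I H)^{-1}\nabla_I H,\nabla_I H\rangle=\langle I,\nabla_I H\rangle=2H=2$ on $S^*\ecal_A$, so the Schur factor is not merely nonzero but identically $2$. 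With that one line, your Schur-complement argument becomes a complete and self-contained proof of the degree-$2$ case of Proposition \ref{MAINNDintro} (and hence of Corollary \ref{ISOENDPROP2}) that avoids the paper's cone-over-a-link argument entirely; without it, the middle of your write-up repeats the gap in the stated form of Proposition \ref{ISOENDPROP}, though your overall conclusion still stands because your first paragraph already closes the proof via Proposition \ref{MAINNDintro}.
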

 
 We now prove Proposition \ref{MAINNDintro}.
The main idea is to study the image of $S^*M$ and $T^*M$ under the frequency map $$(x, \xi) \to \nabla_I F(I(x, \xi))=:  \omega_I(x, \xi)$$ from $T^*M \backslash 0 \to \bcal^*$. 
The proof proceeds by a sequence of Lemmas.  First, we study the homogeneities of moment maps and frequency map.
Of course, the Hamiltonians $H(x, \xi)= |\xi|_g^2$ resp. $|\xi|_g$ are homogeneous. The geodesic flow is homogeneous.
It is not apriori clear whether the actions $p_j$ or $I_j$ are homogeneous. However, the dilates of the integrals $p_j$ commute
with $H$ since the Poisson bracket (symplectic form) is homogeneous and since $H$ is homogeneous.

\begin{lem} The torus fibration obtained by orbits of $\pcal$
or $\ical$ is independent of paramerization and is also invariant under dilations, $(x, \xi) \to (x, r \xi)$. \end{lem}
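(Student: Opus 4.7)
The plan is to prove the two assertions separately. For independence of generators the key point is that the Liouville foliation depends only on the commuting Poisson algebra, while for dilation invariance we exploit the explicit homogeneity structure of the ellipsoid moment map.

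First I would handle independence of parameterization. On the open dense set of regular compact points, the torus through $(x,\xi)$ is the connected component of the joint level set $\pcal^{-1}(\pcal(x,\xi))$, equivalently the orbit of the $\R^n$-action, and this is completely determined by the involutive distribution $\mathcal{D}_{(x,\xi)} = \mathrm{span}\{X_{p_1},\ldots,X_{p_n}\}_{(x,\xi)}$. On the regular set both $\pcal$ and $\ical$ have full rank, so by the inverse function theorem the $I_j$ are locally smooth functions of the $p_k$ and vice versa. They therefore generate the same abelian Poisson algebra, span the same distribution, and cut out the same foliation by tori.

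Next I would establish dilation invariance using the explicit form $\pcal = (H, \pcal_e)$ given in \eqref{pcaldef}. Interpreting $P_\xi$ as the orthogonal projection onto $\xi^{\perp}$ (so that $P_{r\xi} = P_\xi$ for every $r\neq 0$), one has $L(x, r\xi) = L(x,\xi)$, and hence the spectrum of $L$, together with the elementary symmetric functions $e_j(\vec{\lambda})$, is homogeneous of degree zero in $\xi$. Thus $\pcal_e \circ \delta_r = \pcal_e$. Combined with $H \circ \delta_r = r^2 H$ this gives
\[
\pcal \circ \delta_r \;=\; \Psi_r \circ \pcal, \qquad \Psi_r(c_0, \vec{c}) := (r^2 c_0, \vec{c}),
\]
which is a diffeomorphism of the image in $\R^{n-1}$. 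Therefore $\delta_r$ maps each level set $\pcal^{-1}(c_0,\vec{c})$ bijectively onto $\pcal^{-1}(r^2 c_0, \vec{c})$, and since $\delta_r$ is a homeomorphism it sends connected components to connected components, i.e.\ individual Liouville tori to individual Liouville tori.

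The main (mild) obstacle I anticipate is essentially bookkeeping: ensuring that the degree-zero homogeneity of $\pcal_e$ is genuinely valid on all of $T^*\ecal_A \setminus 0$ and not merely on $S^*\ecal_A$, which requires taking $P_\xi = I - \xi\xi^{*}/|\xi|^2$ rather than the unnormalized expression used on the unit cosphere bundle. Once this convention is fixed, both assertions reduce to direct algebraic verifications, and the statement of the lemma follows immediately.
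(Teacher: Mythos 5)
Your argument is correct, but it is a genuinely different route from the paper's. For dilation invariance you use the explicit Moser structure of the ellipsoid integrals: with the normalized projector $P_\xi = I - \xi\otimes\xi^*/|\xi|^2$ the Lax matrix is homogeneous of degree zero in $\xi$, so $\pcal_e\circ\delta_r=\pcal_e$ while $H\circ\delta_r=r^2H$, and level components (hence Liouville tori, on the regular compact set) are permuted by $\delta_r$. The paper deliberately avoids this: the lemma sits inside the proof of Proposition \ref{MAINNDintro}, which is stated for a \emph{general} Kolmogorov non-degenerate integrable system with homogeneous Hamiltonian, where, as the paper remarks, ``we do not have explicit formulae for the constants of the motion'' and it is not a priori clear that the $p_j$ or $I_j$ have any homogeneity. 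The paper's proof therefore uses only homogeneity of the geodesic flow itself: dilation maps flow lines to flow lines (up to time reparametrization), a generic torus is the closure of a single dense flow line, so generic tori map to tori, and the remaining orbits are handled by continuity. So the trade-off is: your argument is sharper and purely algebraic, but it only establishes the lemma for Moser's eigenvalue moment map on the ellipsoid, whereas the paper's softer density-plus-closure argument gives the statement in the generality actually needed for Proposition \ref{MAINNDintro}; if you intend your proof to substitute for the paper's, you should either restrict that proposition's proof to the ellipsoid case or supply the general argument. Your treatment of independence of parameterization (that $\pcal$ and $\ical$ are local functions of one another on the regular set and hence span the same involutive distribution) is fine and matches the paper's brief remark to the same effect.
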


This statement is not obvious because we do not have explicit formulae for the constants of the motion.

\begin{proof} The statement is obviously true for the geodesic flow-lines foliation, since the Hamiltonian is homogeneous. However,
for a generic Lagrangian torus, the geodesic flowlines are dense in the torus. Since each is invariant under dilation, so must be their closure,
hence the torus must be invariant under dilation.  Hence the statement is true for an open dense set of orbits. But then it must be true for all
orbits by continuity of the flow.

\end{proof}

\begin{lem} For fiber-homogeneous Hamiltonians,  action-angle variables are homogeneous of degree $1$  under dilation $\tau(x, \xi) = (x, \tau \xi)$  of tori in $T^* E_e$. \end{lem}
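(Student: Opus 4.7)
The plan is to realize each action variable as a period integral of the Liouville $1$-form over a basis cycle on the Lagrangian torus, and then exploit the explicit weight of this $1$-form under the dilation $\tau(x,\xi) = (x, \tau \xi)$.

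First I would recall that the canonical $1$-form $\alpha = \xi\, dx$ on $T^*M$ transforms simply under $\tau$:
\begin{equation*}
\tau^* \alpha \;=\; \tau\, \alpha,
\end{equation*}
so in particular the symplectic form $\omega = d\alpha$ is homogeneous of degree $1$. By the preceding lemma, if $\Lambda$ is a regular compact Lagrangian torus fibre of $\pcal$ (or $\ical$), then $\tau(\Lambda)$ is another such torus and $\tau\colon \Lambda \to \tau(\Lambda)$ is a diffeomorphism.

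Next I would fix a basis of $1$-cycles $\gamma_1,\dots,\gamma_k$ of $H_1(\Lambda;\Z)$ and declare the corresponding basis on the dilated torus to be $\tau_*\gamma_1,\dots,\tau_*\gamma_k$. Because the family $\{\tau(\Lambda)\}_{\tau > 0}$ is connected, this is the natural continuous extension of any fixed cycle basis on $\Lambda$, and the associated action variables are $I_j(\Lambda') = \frac{1}{2\pi}\oint_{\gamma_j^{\Lambda'}} \alpha$. A direct substitution then gives
\begin{equation*}
I_j\bigl(\tau(\Lambda)\bigr) \;=\; \frac{1}{2\pi}\oint_{\tau_*\gamma_j}\alpha \;=\; \frac{1}{2\pi}\oint_{\gamma_j}\tau^*\alpha \;=\; \tau\cdot\frac{1}{2\pi}\oint_{\gamma_j}\alpha \;=\; \tau\, I_j(\Lambda),
\end{equation*}
which is exactly homogeneity of degree $1$.

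The one point that requires care is the coherence of the cycle choices across the one-parameter family of dilated tori, since action variables are a priori only defined up to a $\mathrm{GL}(k,\Z)$ change of basis on each torus. This is the only real obstacle, and it is resolved precisely because $\tau_*$ provides a canonical identification of $H_1(\Lambda;\Z)$ with $H_1(\tau(\Lambda);\Z)$ that varies continuously in $\tau$. With this choice fixed, the computation above is immediate, and homogeneity of the angle variables (of degree $0$) follows from the weight-$1$ relation $\sum_j dI_j \wedge d\theta_j = d\xi \wedge dx$.
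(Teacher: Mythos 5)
Your proof is correct and is essentially the paper's own argument: the paper likewise writes $I_j = \oint_{\gamma_j} p\,dq$ over a homology basis of the invariant torus and uses the degree-one homogeneity of the Liouville form under $\tau(x,\xi)=(x,\tau\xi)$ to pull the factor $\tau$ out of the period integral. Your extra remarks on transporting the cycle basis via $\tau_*$ simply make explicit a step the paper leaves implicit.
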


\begin{proof} For regular tori, action variables are defined by
 $$I_j ((x, \xi) ) = \oint_{\gamma_j((x, \xi) )} pdq, $$
 where $\{\gamma_j(x, \xi)\}$ is a homology basis for the torus $\R^n \cdot (x, \xi)$. Since $pdq$ is homogeneous of degree one, 
 $$I_j (x, \tau \xi) =  \oint_{\gamma_j((x, \tau \xi) )} pdq=  \oint_{\gamma_j((x, \xi) )} \tau^* pdq = \tau  \oint_{\gamma_j((x, \xi) )} pdq. $$ \end{proof}

\begin{lem} If $H(x, \xi)$   is homogeneous of degree one, resp. two,  under dilation $\tau(x, \xi) = (x, \tau \xi)$, and if $H = F(I)$ is its expression
in action variables, then $F$ is homogeneous of degree one, resp. two,  on $\bcal$. \end{lem}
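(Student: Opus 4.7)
The plan is to combine the previous lemma, which says that action variables $I_j$ are homogeneous of degree $1$ under the fiber dilation $\tau(x,\xi)=(x,\tau\xi)$, with the hypothesis that $H$ itself is homogeneous in $\xi$. Once both homogeneities are in hand, the identity $H = F(I)$ forces $F$ to be homogeneous of the same degree as $H$.

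More concretely, I would argue as follows. Fix $(x,\xi) \in T^*M \setminus 0$ and $\tau > 0$. By the previous lemma, on the open dense set of regular compact orbits we have
\begin{equation*}
I_j(x,\tau\xi) \;=\; \tau\, I_j(x,\xi), \qquad j = 1,\ldots, n.
\end{equation*}
Substituting into $H = F(I)$ and using homogeneity of $H$ of degree $k$ (where $k=1$ or $k=2$) gives
\begin{equation*}
\tau^{k} F\bigl(I(x,\xi)\bigr) \;=\; \tau^{k} H(x,\xi) \;=\; H(x,\tau\xi) \;=\; F\bigl(I(x,\tau\xi)\bigr) \;=\; F\bigl(\tau\, I(x,\xi)\bigr).
\end{equation*}
Since $(x,\xi)$ ranges over an open dense subset of $T^*M \setminus 0$, the images $I(x,\xi)$ sweep out an open dense subset of $\mathcal{B}$; combined with arbitrariness of $\tau>0$, this shows $F(\tau I) = \tau^k F(I)$ on a dense open set of $\mathcal{B}$, and then on all of $\mathcal{B}$ by continuity of $F$.

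There is no real obstacle here: the only subtlety is that the identity $I_j(x,\tau\xi) = \tau I_j(x,\xi)$ is only established a priori on the set of regular compact points, while $F$ is defined on the action image $\mathcal{B}$. This is handled by the density argument above, and by observing that extensions of $F$ across singular strata of the moment map are determined by the regular values. Thus the homogeneity $F(\tau I) = \tau^k F(I)$ propagates from the dense open locus to all of $\mathcal{B}$, completing the proof.
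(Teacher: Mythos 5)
Your proposal is correct and is essentially the paper's own argument: the paper proves this lemma by exactly the chain $\tau^k F(I(x,\xi)) = \tau^k H(x,\xi) = H(x,\tau\xi) = F(I(x,\tau\xi)) = F(\tau I(x,\xi))$, relying on the preceding lemma that the action variables are homogeneous of degree one under fiber dilation. Your additional density/continuity remark to pass from the regular compact locus to all of $\bcal$ is a reasonable tightening of a step the paper leaves implicit, but it does not change the method.
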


\begin{proof} We have, 
$$\tau F(I(x, \xi)) = \tau H(x, \xi) = H(x, \tau \xi) = F(I(x, \tau \xi)) = F(\tau I(x, \xi)) \implies F(\tau b) = \tau F(b).  $$

\end{proof}

\begin{lem} \label{omega0} If $H(x, \xi)$   is homogeneous of degree two, resp. one,  under dilation, then  $\omega_I$ is homogeneous of degree one, resp. zero, under dilation $\tau(x, \xi) = (x, \tau \xi)$  of tori in $T^* E_e$. \end{lem}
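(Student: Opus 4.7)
The plan is straightforward: combine the two immediately preceding lemmas (homogeneity of $F$ and homogeneity of the action variables $I$) via the chain rule applied to the relation $H = F \circ I$. No new machinery is needed; the computation is a one-line consequence of differentiating an Euler-type identity.

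In detail, by the preceding lemma the action coordinates satisfy $I(x, \tau\xi) = \tau I(x, \xi)$, and by the lemma before that $F$ itself is homogeneous of the same degree $k \in \{1, 2\}$ as $H$, i.e.\ $F(\tau b) = \tau^k F(b)$ for $b$ in the image $\bcal$ of the action map. First I would differentiate the identity $F(\tau b) = \tau^k F(b)$ in $b$ (not in $\tau$), obtaining
\begin{equation*}
\tau \, (\nabla F)(\tau b) \;=\; \tau^k \, (\nabla F)(b),
\end{equation*}
so that $(\nabla F)(\tau b) = \tau^{k-1} (\nabla F)(b)$. Then I would pull this back along the homogeneous map $I$: since $I(x, \tau\xi) = \tau I(x, \xi)$, we get
\begin{equation*}
\omega_I(x, \tau\xi) \;=\; (\nabla F)(I(x, \tau\xi)) \;=\; (\nabla F)(\tau I(x, \xi)) \;=\; \tau^{k-1} (\nabla F)(I(x, \xi)) \;=\; \tau^{k-1} \omega_I(x, \xi).
\end{equation*}
For $k = 2$ this gives degree one, for $k = 1$ degree zero, which is exactly the claim.

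The only small subtlety, which I would mention but not belabor, is that action variables are only defined on the open dense set of regular compact torus orbits, and that the dilation $\tau$ preserves this set by the first lemma of this sequence (invariance of the Lagrangian torus foliation under dilation). So the identity above makes sense wherever both sides are defined, and the claim holds on the open dense domain of the frequency map. There is no genuine obstacle here — the lemma is essentially a packaging step that will be invoked in the proof of Proposition \ref{MAINNDintro} to ensure $\nabla_I H \neq 0$ under the degree-two homogeneity hypothesis, at which point Proposition \ref{ISOENDPROP} converts Kolmogorov non-degeneracy into iso-energetic non-degeneracy.
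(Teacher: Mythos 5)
Your proposal is correct and follows essentially the same route as the paper: the paper's proof likewise reduces the claim to the homogeneity of $F$ on $\bcal$ (from the preceding lemma) together with the standard fact that the gradient of a degree-$k$ homogeneous function is homogeneous of degree $k-1$, composed with the degree-one homogeneity of the action map $I$. Your version simply writes out the differentiation of $F(\tau b)=\tau^k F(b)$ and the domain caveat explicitly, which the paper leaves implicit.
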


\begin{proof}  Since $\omega_I (x, \xi) = \nabla_I F(I(x, \xi)) $ it suffiices to show that $\nabla_I F(I)$ is homogeneous of degree one, resp. zero
on $\bcal$. But $F(I)$ is homogeneous of degree two, resp. one. 

\end{proof}

For instance, $I_j(x, \xi) = \xi_j$ in the case of a flat torus and $F(I) = \sum_j I_j^2/2 = H(x, \xi) $. Here we normalize $H$ to be homogeneous
of degree two, and  $\omega_I = (I_1, \dots, I_n)$. If we define $H = |\xi|_g,$ then $F(I) = \sqrt{\sum_j I_j^2/2}  $ and
$\omega_I = \frac{(I_1, \dots, I_n)}{\sqrt{\sum_j I_j^2/2} }. $

Next, we study the geometry of the frequency map $f = \omega_I$. Some of the following overlaps the viewpoint of  \cite{BH91}.
The frequency map is viewed as a map $f: \vec I \to \omega_{\vec I}$  from $\R^n \to \R^n$, or in Duistermaat's notation,  $f: \vec I \in \bcal \to
\R^n$.  Here, $H = F(I)$ and $H^{-1}(1) = S^*M$ and  $(f \circ \pi) (H^{-1}(0))$
is the image of $\{H = 0\}$ in $T^* \T^n$ under the frequency map rather then the action moment map. So $(f \circ \pi) (H^{-1}(1))$ is the image of $S^*M$ under the frequency map.

\begin{lem}   Iso-energetic non-degeneracy is equivalent to: the 
half lines $\R_+ \vec \omega$ should be transverse to the hypersurface $(f \circ \pi)(H^{-1}(1))$, where $\pi: T^*\T^n \to \R^n$ is the projection
onto the momentum axis.  \end{lem}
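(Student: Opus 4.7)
The plan is to translate the geometric transversality into a linear-algebraic condition on the bordered Hessian matrix $M$ of Definition \ref{KNDDEF}, whose non-vanishing is exactly iso-energetic non-degeneracy.

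Fix $I$ with $F(I) = 1$, set $\vec\omega = f(I) = \nabla_I F(I)$, and note that $Df(I) = D^2 F = D^2 H$. The tangent space to the action-level set is $T_I\{F=1\} = \ker \nabla_I F = \vec\omega^{\perp}$, so the image $(f\circ \pi)(H^{-1}(1))$ passes through $\vec\omega$ with tangent space $D^2 H\cdot\vec\omega^{\perp}$, provided $D^2 H|_{\vec\omega^{\perp}}$ is injective (which is what makes the image a smooth hypersurface near $\vec\omega$). The half-line $\R_+\vec\omega$ meets this image at $t = 1$ with tangent direction $\vec\omega$, so transversality of $\R_+\vec\omega$ with $(f\circ\pi)(H^{-1}(1))$ at $\vec\omega$ is equivalent to
\begin{equation}\label{eq:transveq}
\R\,\vec\omega + D^2 H(\vec\omega^{\perp}) \;=\; \R^n.
\end{equation}

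Next I identify \eqref{eq:transveq} with the non-vanishing of the bordered Hessian. Consider the linear endomorphism
\[
\Phi: \R^n \oplus \R \longrightarrow \R^n \oplus \R, \qquad \Phi(v, t) = \bigl(D^2 H\,v + t\vec\omega,\ \vec\omega \cdot v\bigr).
\]
Its matrix in the standard basis is precisely
\[
M \;=\; \begin{pmatrix} D^2 H & \vec\omega \\ \vec\omega^{\,T} & 0 \end{pmatrix},
\]
so $\det M \neq 0$ iff $\Phi$ is invertible. Decomposing $v = \alpha\vec\omega/|\vec\omega|^2 + v_\perp$ with $v_\perp \in \vec\omega^{\perp}$, the equation $\vec\omega\cdot v = s$ pins down $\alpha = s$, so surjectivity of $\Phi$ onto $\R^n \oplus \R$ reduces to surjectivity of $(t, v_\perp) \mapsto t\vec\omega + D^2 H\, v_\perp$ from $\R \times \vec\omega^{\perp}$ onto $\R^n$, which is exactly \eqref{eq:transveq}. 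A dimension count (both source and target have dimension $n+1$) converts surjectivity of $\Phi$ into bijectivity, and along the way forces $D^2 H|_{\vec\omega^{\perp}}$ to be injective; hence the smoothness of the image as a hypersurface near $\vec\omega$ is automatic, and the word ``transverse'' is well-posed.

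Stringing the equivalences together gives $\det M \neq 0 \iff \eqref{eq:transveq} \iff $ the half-line $\R_+\vec\omega$ is transverse to $(f\circ\pi)(H^{-1}(1))$ at $\vec\omega$. The only bookkeeping point that requires any care is guaranteeing that the image is a smooth hypersurface so that transversality makes sense; this is handled by the dimension-count observation above, and the remainder is routine linear algebra.
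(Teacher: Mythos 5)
Your argument is correct in substance, and it follows a genuinely different route from the paper. The paper's proof is a Gauss-map argument: it views $I \mapsto \omega_I = \nabla_I F$ as assigning to $I$ the normal of the level set $\{F=1\}$, notes that $D\omega_I = D^2F$ carries $T_I\{F=1\}$ to the tangent space of the image $\omega(\{F=1\})$, and then deduces transversality of the ray by invoking Kolmogorov non-degeneracy of $D\omega_I$; it never touches the bordered determinant of Definition \ref{KNDDEF} directly, and as written it really only argues one implication. You instead identify the transversality condition $\R\vec\omega + D^2H(\vec\omega^{\perp}) = \R^n$ with invertibility of the bordered matrix by an explicit block analysis of the map $\Phi(v,t)=(D^2H\,v+t\vec\omega,\ \vec\omega\cdot v)$, which gives the stated equivalence in both directions and, unlike the Schur-complement computation in Proposition \ref{ISOENDPROP}, does not require $D^2H$ to be invertible. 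That is a cleaner and in fact more complete justification of the lemma than the one in the paper.

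One caveat to flag: your closing claim that smoothness of the image ``is automatic'' from the dimension count is slightly circular. The dimension count forces $D^2H|_{\vec\omega^{\perp}}$ to be injective only under the hypothesis $\det M \neq 0$, i.e.\ in the direction iso-energetic non-degeneracy $\Rightarrow$ transversality. In the converse direction you are tacitly assuming that the image is an immersed hypersurface near $\vec\omega$ with tangent space exactly $D^2H(\vec\omega^{\perp})$; if $f|_{\{F=1\}}$ failed to be an immersion at $I$ the set could in principle still be a hypersurface with a larger tangent space, and transversality of the ray would then not by itself yield $\R\vec\omega + D^2H(\vec\omega^{\perp})=\R^n$. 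Since the lemma's wording (``the hypersurface $(f\circ\pi)(H^{-1}(1))$'') already presupposes this regularity, and the paper's own proof is looser on exactly this point, the assumption is acceptable, but it should be stated as a hypothesis rather than claimed to be automatic.
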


The statement is
that it is transverse to all rays through the origin.
Compare the two statements: (i) $(F'')^{-1} \omega_I$ is transverse to $\{F = c\}$, and (ii)  $\R_+ \vec \omega$ should be transverse to the hypersurface $
\omega (S^*M)$. In the second, $\R_+ \vec \omega$ is simply any ray through the origin.

\begin{proof}
The map $\vec \omega \circ \ical^{-1}$ converts $\ical(S^*M) \subset \bcal$ into $\omega_I(S^*M)$. Its inverse is $\ical \circ \vec \omega^{-1} $. Now,
$\vec \omega = \nabla_I F$, so $D \vec \omega = F''$ and $(F'')^{-1}  = D \vec \omega^{-1}$.

Let $\Omega = \nabla_I F \circ \ical (T^*M)$.
The basic maps  are  $$(x, \xi) \in T^* M \to \vec I(x, \xi) \in \bcal \to \nabla_I F \in \Omega. $$
Restrict to the cosphere bundle to get
$$(x, \xi) \in S^* M \to \vec I(x, \xi) \in \bcal_1 = \{F = 1\}  \to d_I F (\bcal_1) \in \Omega \subset T^*\bcal. $$
Then, $D F' : T_I \bcal \to T_{\omega_I} \Omega$. 

In effect, $I \to \omega_I$ is a kind of Gauss map taking a point to its normal.
$D^2 F = D \omega_I$ takes the tangent space at $I$ to $\{F =1\}$ to the tangent space at $\omega_I$  of its Gauss map  image $\omega\{F =1\}$.
Now, $\omega_I$ is normal at $I$ to $\{F =1\}$, so $D \omega_I$ takes $\omega_I$ (viewed as a tangent vector to $\bcal$) to
a vector $(D_I  \omega_I) \in T_{\omega_I} \omega (\bcal)$. The derivative takes tangent vectors to $\{F=1\}$ to tangent vectors
to $\omega \{F =1\}$, so $D_I \omega_I \cdot \omega_I$ is  transverse to $\omega_I \{F =1\}$ at $\omega_I$ when $D \omega_I$ is Kolmogorov non-degenerate. 
The ray $\R_+\omega_I$ hits $\omega_I\{F=1\}$ at $\omega_I$ and so this ray is transverse to $\omega \{F =1\}$.

\end{proof}

\begin{lem} \label{CONE} If $H$ is real analytic and homogeneous of degree 2, and if it fails to be iso-energetically non-degenerate, then $S: = \omega_I(S^*M)$
is the cone over a link $L \subset S^{n-1}$ in $\bcal^*$. \end{lem}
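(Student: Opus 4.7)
The plan is to recast the failure of iso-energetic non-degeneracy as a tangency of the Euler (radial) vector field on $\bcal^*$, and then use real analyticity to propagate this tangency from the open failure locus to all of $S$. By the preceding lemma, iso-energetic non-degeneracy at an action $I$ is equivalent to transversality of the ray $\R_+\vec\omega(I)$ with the hypersurface $S$ at $\vec\omega(I)$. Hence, failure at $I$ is equivalent to the statement that the Euler vector field $E = \sum_j \omega_j\,\partial/\partial \omega_j$ on $\bcal^*$ lies in $T_{\vec\omega(I)} S$.

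I interpret the hypothesis as failure on a nonempty open subset $U \subset S^*M$. Under this interpretation, $E$ is tangent to $S$ on the open subset $\omega_I(U)$, which, after shrinking $U$ so that $\omega_I$ has maximal rank, lies in $S_{reg}$. Since $H$ is real analytic, so are $F$, the frequency map $\omega_I = \nabla_I F$, and the image $S$. The tangency condition $E \in TS$ can be written locally as the vanishing of the real analytic scalar $\langle\omega,\nu\rangle$, where $\nu$ is a local analytic unit conormal to $S_{reg}$. Vanishing of an analytic function on an open subset of a connected real analytic manifold forces vanishing throughout, so we conclude that $E$ is tangent to $S_{reg}$ on every connected component meeting $\omega_I(U)$; restricting $S$ to such components if needed, $E \in TS_{reg}$ everywhere.

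The flow generated by the Euler field on $\bcal^*$ is precisely the positive dilation $\omega \mapsto e^t\omega$. Tangency of $E$ to $S_{reg}$ implies that $S_{reg}$, and by continuity its closure $S$, is invariant under positive dilation. Consequently $S = \R_{>0}\cdot L$ with $L := S \cap S^{n-1}$, exhibiting $S$ as a cone over the link $L$. The main technical obstacle is the analytic continuation step, which requires that $S_{reg}$ be dense in $S$ (so that tangency extends across the singular locus by taking closures) and that the relevant connected components of $S_{reg}$ meet $\omega_I(U)$. The degree-two homogeneity of $H$ enters through Lemma \ref{omega0}, which guarantees that $\omega_I$ is degree-one homogeneous, so that the identification of the iso-energetic transversality direction $\R_+\vec\omega$ with the orbit of the Euler field on $\bcal^*$ is valid; without this homogeneity the tangency of the Euler field would not have the geometric interpretation needed to conclude dilation invariance.
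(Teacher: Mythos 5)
Your first half is essentially the paper's own argument: you invoke the preceding lemma to recast iso-energetic degeneracy at a point as tangency of the ray $\R_+\omega_I$ (equivalently, of the Euler field) to $S$ at $\omega_I$, interpret the hypothesis as failure on a nonempty open set, and propagate the tangency to all of (the relevant components of) the regular part of $S$ by real analyticity; the paper does exactly this, phrased as ``fails on an open set, hence on a dense open set, and tangency is a closed condition.'' The use of Lemma \ref{omega0} to identify the transversality direction with the radial direction is also as in the paper.

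The gap is in your final step. From tangency of the Euler field along $S_{\mathrm{reg}}$ you conclude that $S_{\mathrm{reg}}$, ``and by continuity its closure $S$,'' is invariant under the full positive dilation group, whence $S=\R_{>0}\cdot L$ with $L=S\cap S^{n-1}$. Tangency of a vector field along the smooth part of a set gives only \emph{local} invariance: an integral curve starting in $S_{\mathrm{reg}}$ remains in $S$ only until it reaches the edge of $S_{\mathrm{reg}}$ (the singular locus of $S$ or the boundary of the image), and passing to closures does not repair this, since $S_{\mathrm{reg}}$ itself has not been shown flow-invariant. For instance, a radial annulus $\{r\theta:\ \theta\in L,\ 1<r<2\}$ has the Euler field tangent at every point, yet neither it nor its closure is dilation-invariant. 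This is not a technicality here: $S=\omega_I(S^*M)$ is the frequency image of the unit cosphere bundle and is in general a bounded set, so it cannot literally equal the unbounded cone $\R_{>0}L$. What your argument (and the paper's) legitimately yields is that $S_{\mathrm{reg}}$ is foliated by radial segments, i.e.\ $S$ is an \emph{open subset} of a cone $\R_+L$ over a link $L$ of codimension one in $S^{n-1}$ (with $L$ the radial projection of $S$, not $S\cap S^{n-1}$); this weaker form is what the paper's proof actually asserts ($S\subset\R_+L$) and is all that the application in Proposition \ref{MAINNDintro} needs, since there one only uses that dilating the actions moves the image along rays already tangent to $S$, so the radial direction contributes no independent direction. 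On the positive side, the codimension-one property of $L$, which the paper argues separately via the projection $p:S\to S^{n-1}$ and a contradiction, falls out of your radial-foliation picture for free, so that portion of your route is a genuine simplification once the invariance claim is corrected to its local form.
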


\begin{proof} Isoenergetic degeneracy at one point $\omega_I$ is the condition that the ray $\R_+ \omega_I$ is tangent to $S$ at $\omega_I$.
If iso-energetic non-degeneracy fails to hold on an open set, the by real-analyticity it fails to hold on a dense open set. It follows that
for a dense open set of $\omega_I \in S$,  $\R_+ \omega_I$ is tangent to $S$ at $\omega_I$. This is a closed condition, and therefore it holds
everywhere on $S$.

%If it hits at $\omega_I$,
%then $\omega_I$ is tangent to $\omega_I(\{F =1\}$ at $\omega_I$. Under $D \omega_I^{-1}$,  $D \omega_I^{-1} \cdot \omega_I$ would 
%be tangent to $\{F =1\}$ at $I$. This does not contradict that $\omega_I$ is normal there and indeed is the property prohibited by isoenergetic
%non-degeneracy, i.e. $\omega_I$ cannot be tangent to $\omega_I(S^*M)$ at $\omega_I$. This is apparently more precise than just being
%non-tangent.

We now need to show that,  if $S$ is a real analytic  hypersurface such that every ray through $0$ is tangent to $S$ at its intersection point, then $S$ is open subset of a 
cone over a link $L \subset S^{n-1}$ of codimension $1$ in $S^{n-1}$ i.e. $S \subset \R_+ L$.

Note that $S = \bigcup_{\eta \in S^{n-1}} \R_+ \eta \cap S. $ Indeed, through every point $v$ of $S$, the ray $\R_+ v$ obviously hits $S$. 
Define $S/\sim$ to be the quotient of $S$ by the equivalence relation of belonging to the same ray. 
We then have a natural  map $p: S \to S^{n-1}$ by following $\R_+ x$ for $x \in S$ to its intersection with $S^{n-1}$.  We wish to show that
the image is of codimension one, namely a link. Since the is analytic, the alternative  is that the image contains an open disk $D^{n-1} \subset S^{n -1}$. 
Then the disk and $S$ have the same direction, and follwing the rays gives a map $g: D^{n-1} \to S$, so that the image is a graph over
the disk. But the one shows that the rays cannot all be tangent at the impact point unless the image is planar,  since $g_* x$ will cover the tangent spaces at impact
points.

% If a ray touches $S$ at a single point $s$, then,  at the tangent space to $S$ at $s$ divides the set of rays into
%two families, those which touch $S$ and those which never touch $S$. It follows that such points must have codimension one in $S$. 

\end{proof}

%This reduces the proof in the case of the ellipsoid to show that $\omega_I(S^*M)$ cannot be a cone over a link. The normalization to
%$S^*M$ is not necessary: it is sufficient to disqualify this for any radius of cosphere bundle. 

We now complete the proof of Proposition \ref{MAINNDintro}.

Let us normalize $H$ to be homogeneous of degree two and  consider the image $\omega_I(\{F =1\} = \omega_I(S^*M)$. By Lemma \ref{omega0}, the image $\omega_I(T^* M)$ is the cone over $ \omega_I(S^*M)$.  But by Lemma \ref{CONE}, $\omega_I(\{F =1\} $ is itself a cone over a
link  $L \subset S^{n-1}$ and at the point $\omega_I $ in the image, the ray $\R_+ \omega_I$ intersects $ \omega_I(S^*M)$ in an interval (possibly
infinite). If we dilate in the $I$ variables, the by Lemma \ref{omega0}, we  dilate the image in the $\omega_I$ variables, and the dilation $\omega_I$
along the same ray $\R_+ \omega_I$. It follows that the final dilation variable does not give an independent direction to the image, and therefore
the integrable system cannot be Kolmogorov non-degenerate.

\section{Variation of eigenvalues of $L$}
\label{LMULTSECT} 
In this section, we compute the variation of the eigenvalues $\lambda_j(x,\xi)$ of $L(x,\xi)$ along real analytic curves in $\SE_A$. The resulting equation \eqref{FIRSTVAR} allows us to prove that there is an open dense set $\mathcal{U}\subseteq \SE_A$ on which $L$ has simple nonzero eigenvalues, and $\calP$ is of full rank.

Let $(x(t),\xi(t))$ be an analytic curve in $S^*\SE_A$. The condition that $(x,\xi)\in S^*\SE_A$ can be formulated as
\begin{eqnarray*}
	\ang{A^{-1}x,x}&=&1\\
	\ang{A^{-1}x,\xi}&=&0\\
	\ang{\xi,\xi}&=&1.
\end{eqnarray*}
Differentiating in $t$, we may consider variations $(\dot x,\dot \xi)\in T_{(x,\xi)}S^*\SE_A$ with
\begin{eqnarray}
\nonumber\ang{A^{-1}x,\dot x}&=&0\\
\label{eq:validvar}\ang{A^{-1}x,\dot \xi}+\ang{A^{-1}\dot x,\xi}&=&0\\
\nonumber \ang{\dot \xi,\xi}&=&0.
\end{eqnarray} 
Along any choice of curve $(x(t),\xi(t))$ we can continue eigenvalues analytically, and we denote these analytic branches as $\lambda_j(t)$ with $\phi_j(t)$ a corresponding orthonormal basis of eigenvectors. Now, writing $B=A-x\otimes x$ for brevity, we compute the variation of a particular eigenvalue.
\begin{eqnarray*}
	\dot\lambda&=& \ang{\dot L \phi ,\phi}\\
	&=& \ang{\dot PBP\phi,\phi}+ \ang{P\dot BP\phi,\phi}+ \ang{ PB\dot P\phi,\phi}\\
	&=& \ang{\dot PB\phi,\phi}+ \ang{\dot B\phi,\phi}+ \ang{B\dot P\phi,\phi}\\
	&=& -\ang{(\dot \xi \otimes \xi+\xi\otimes \dot \xi)B\phi,\phi}+ \ang{\dot B\phi,\phi}- \ang{B(\dot \xi \otimes \xi +\xi \otimes \dot \xi)\phi,\phi}\\
	&=&-\ang{(\dot \xi \otimes \xi)B\phi,\phi}+ \ang{\dot B\phi,\phi}- \ang{B(\xi \otimes \dot \xi)\phi,\phi}\\
	&=&-2\ang{\dot\xi,\phi}\ang{B\phi,\xi}-\ang{(\dot x\otimes x+ x\otimes \dot x)\phi,\phi}\\
	&=& -2\left(\ang{\dot\xi,\phi}\ang{B\phi,\xi}+\ang{\dot x,\phi}\ang{x,\phi}\right)
\end{eqnarray*}
Hence we have
\begin{equation}
\label{FIRSTVARFULL}
\dot\lambda=-2\left(\ang{\dot\xi,\phi}\ang{(A-x\otimes x)\phi,\xi}+\ang{\dot x,\phi}\ang{x,\phi}\right).
\end{equation}
In the case $\dot x=0$ of vertical variations, this reduces to 
\begin{equation}
\label{FIRSTVAR}
\dot\lambda=-2\ang{\dot\xi,\phi}\ang{(A-x\otimes x)\phi,\xi}.
\end{equation}

\begin{prop}
	\label{prop:strongmult}
	The matrix $L(x,\xi)$ has eigenvalues $0=0<\lambda_1(x,\xi),\ldots \leq \lambda_{n-2}(x,\xi)$ counting multiplicity. On an open dense set $\calU\subseteq \S^*\SE_A$, the nonzero eigenvalues $\lambda_j(x,\xi)$ are distinct, and the map $(x,\xi)\mapsto (\lambda_1,\ldots,\lambda_{n-2})$ is a smooth map of full rank.
\end{prop}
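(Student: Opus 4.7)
My plan is to handle the four assertions of the proposition in sequence: the $(0,0)$ structure of the kernel, the positivity of the remaining $\lambda_j$, the open denseness of the simple-spectrum locus, and the full rank of $\calP$ there.

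The multiplicity-two structure of the zero eigenvalue is immediate from Corollary~\ref{Mult2LEM} (showing $\xi,\mathbf{n}_x \in \ker L(x,\xi)$) and Lemma~\ref{Only2LEM} (showing no other vector lies in the kernel). For positivity of the remaining eigenvalues I would invoke the geometric interpretation reviewed in Section~\ref{MOSERSECT}: each $\lambda_j$ is a value of $z$ for which the line $x+s\xi$ is tangent to the confocal quadric $\mathfrak{A}_{\lambda_j}$. Since $\xi$ is already tangent to $\mathfrak{A}_0 = \ecal_A$ at $x$, any further tangent confocal quadric in the pencil must correspond to $z>0$, so $\lambda_j > 0$.

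For open-denseness of the simple-spectrum locus $\calU_1 \subseteq S^*\ecal_A$, the key observation is that $L(x,\xi)$, and hence the coefficients of the reduced characteristic polynomial of degree $n-2$ obtained by dividing out the forced factor $z^2$, depend real-analytically on $(x,\xi)$. Simplicity of the nonzero eigenvalues is the non-vanishing of the discriminant of this reduced polynomial, an open real-analytic condition, so density reduces to exhibiting a single point. A convenient sample is $x_0 = \sqrt{\alpha_n}\, e_n \in \ecal_A$ with $\xi_0$ a generic unit vector in $T_{x_0}\ecal_A = e_n^{\perp}$; distinctness of the $\alpha_i$ together with Moser's tangency picture then forces $n-2$ pairwise distinct positive values $\lambda_j$.

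On $\calU_1$, smoothness of the $\lambda_j$ and of an orthonormal eigenbasis $\phi_1,\ldots,\phi_{n-2}$ follows from standard analytic perturbation theory for symmetric matrices. For the rank of $d\calP$ I would apply the variational identity \eqref{FIRSTVAR} to the vertical variations $(\dot x,\dot\xi) = (0,\phi_k)$, which are admissible by \eqref{eq:validvar} since $\phi_k \perp \xi$ and $\phi_k \perp \mathbf{n}_x$. This yields
\[
d\lambda_j(0,\phi_k) \;=\; -2\,\delta_{jk}\,\langle (A - x\otimes x^*)\phi_j,\xi\rangle,
\]
so the restriction of $d\calP$ to $T_\xi S^*_x\ecal_A$ is diagonal in the basis $\{\phi_k\}$ with entries $-2 c_j$, where $c_j := \langle (A - x\otimes x^*)\phi_j,\xi\rangle$. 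Full rank of $\calP$ thus reduces to the open real-analytic condition $\prod_j c_j \neq 0$, and it suffices to verify non-vanishing at one point.

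The most delicate step, and the one I expect to be the main obstacle, is the simultaneous non-vanishing of all the $c_j$ on a dense subset of $\calU_1$, because overly symmetric sample points (for instance $\xi_0$ taken along a coordinate axis) can accidentally make one or more $c_j$ vanish. My fallback would be to appeal to Moser's Lax-pair theorem: the elementary symmetric functions $e_1(\vec\lambda),\ldots,e_{n-2}(\vec\lambda)$ together with $|\xi|_g^2$ Poisson-commute and are functionally independent on an open dense subset of $T^*\ecal_A\setminus 0$, and hence of $S^*\ecal_A$. Since $(e_1,\ldots,e_{n-2}) \leftrightarrow (\lambda_1,\ldots,\lambda_{n-2})$ is a smooth diffeomorphism on $\calU_1$ (its Jacobian is the nonvanishing Vandermonde $\prod_{i<j}(\lambda_j-\lambda_i)$ up to sign), the full rank of $\calP$ on an open dense subset of $\calU_1$ follows from the functional independence of the $e_j$, and the desired set $\calU$ is the intersection of this locus with $\calU_1$.
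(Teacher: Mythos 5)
Your core computation is the right one---and it is the same engine the paper uses: the vertical variations $(\dot x,\dot\xi)=(0,\phi_k)$ are admissible, and \eqref{FIRSTVAR} makes the restriction of $d(\lambda_1,\dots,\lambda_{n-2})$ to $T_\xi S^*_x\ecal_A$ diagonal with entries $-2c_j$, $c_j=\ang{(A-x\otimes x^*)\phi_j,\xi}$. The kernel discussion via Corollary \ref{Mult2LEM} and Lemma \ref{Only2LEM} is fine, and positivity is true (it follows even more quickly from $A-x\otimes x^*\succeq 0$ on $\ecal_A$, by Cauchy--Schwarz in the $A$-inner product, than from the tangency picture, whose ``$z>0$'' claim you assert without the needed observation that for $z<0$ the confocal quadric is an ellipsoid containing $\ecal_A$ in its interior, so no line through $x$ can be tangent to it). The genuine gap is the full-rank step: you reduce it to $\prod_j c_j\neq 0$, explicitly leave that unverified, and your fallback is to cite functional independence of the $e_j$ from Moser. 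But that independence \emph{is} the content being proved here (it is exactly what feeds Corollary \ref{PCALFULLRANK}), so as a proof of this proposition the fallback begs the question rather than closing it. There is also a smaller wrinkle: $\prod_j c_j$ is defined only on the simple-spectrum locus, which need not be connected, so ``nonvanishing at one point plus real-analyticity'' does not by itself give density in $S^*\ecal_A$; run the identity principle instead with a globally real-analytic quantity (e.g.\ the Gram determinant of $de_1,\dots,de_{n-2}$, the $e_j$ being coefficients of the reduced characteristic polynomial) and convert back to the $\lambda_j$ via the Vandermonde factor, as you indicate.

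The good news is that the obstacle you flagged dissolves at the very sample point you propose. At $x_0=\sqrt{\alpha_n}\,e_n$ one has $\mathbf{n}=e_n$ and $A-x_0\otimes x_0^*=\diag(\alpha_1,\dots,\alpha_{n-1},0)$; for $\xi_0\in e_n^\perp$ the matrix $L(x_0,\xi_0)$ preserves $W=\xi_0^\perp\cap e_n^\perp$ and acts there as the compression of $\diag(\alpha_1,\dots,\alpha_{n-1})$, so each eigenvector $\phi_j\in W$ satisfies $A\phi_j=\lambda_j\phi_j+c_j\xi_0$. Choose $\xi_0$ with all of its first $n-1$ coordinates nonzero: then the Cauchy interlacing $\alpha_i<\lambda_i<\alpha_{i+1}$ is strict, so the $\lambda_j$ are distinct and positive, and $c_j=0$ would force $A\phi_j=\lambda_j\phi_j$, i.e.\ $\phi_j=\pm e_i$ for some $i\le n-1$ with $(\xi_0)_i=0$, a contradiction. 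Thus both analytic conditions hold simultaneously at one point, and your primary route closes with no appeal to Moser's independence; note it uses the simplicity of $A$'s spectrum, as it must. For comparison, the paper avoids sample points altogether: using the same variational formulas it shows that an eigenvalue that stays multiple on an open set would have its eigenspace forced into an eigenspace of $A$ (impossible since those are one-dimensional), and it kills a putative relation $\sum_j c_j\,d\lambda_j=0$ pointwise by pairing it with the mixed variations $\dot x=\phi_i$. Your analyticity-plus-sample-point scheme is a legitimate alternative, but only once the sample-point verification above is actually carried out.
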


\begin{proof}
The first part of this Proposition follows from Lemma \ref{Only2LEM}. Next we show that the nonzero eigenvalues are distinct on an open dense set. If this were not true, then there would exist an open set $\mathcal{B}\subset S^*\SE_A$ such that some nonzero eigenvalue $\lambda(x,\xi)$ has multiplicity $k\geq 2$. Fix $(x_0,\xi_0)\in\mathcal{B}$ and $V=\Span \{\phi_j\}_{j=1}^k \subset \RR^n$ the $\lambda$-eigenspace at $(x_0,\xi_0)$. We now choose a curve $(x(t),\xi(t))$ through $S^*\SE_A$ so that $\dot x=\phi_i\in (A^{-1}x)^\perp$ and 
\[\dot\xi=-\frac{\ang{A^{-1}\xi,\phi_i}}{|A^{-1}x|^2} A^{-1}x+c\phi_i\]
with $c\in\RR$ arbitrary. This is always possible, as this choice of $\dot x,\dot \xi$ satisfy \eqref{eq:validvar}. 

Now applying \eqref{FIRSTVARFULL}, we obtain $\dot\lambda_j=0$ for $j\neq i$ and
\begin{equation}
\dot\lambda_i=-2(\ang{\phi_i,x}+c\ang{(A-x\otimes x)\phi_i,\xi})
\end{equation}
From our assumption that the $\lambda_j$ cannot be split by any of these variations, it follows that for every corresponding eigenvector $v\in\ker(L-\lambda I)$ and every $c\in\RR$, we have $\dot\lambda=0$ and hence $\ang{v,x+c(A-x\otimes x)\xi}=0$. Hence $\ker(L-\lambda I)\subseteq x^\perp \cap ((A-x\otimes x)\xi)^\perp$, and for any eigenvector $v$ we have $\lambda v=Lv=P(A-x\otimes x)Pv=P(A-x\otimes x)v=(A-x\otimes x)v=Av$. This is a contradiction, as the eigenspaces of $A$ are $1$-dimensional.

Hence we have an open dense set $\calU\subset S^*\SE_A$ on which the eigenvalues $\lambda_j$ are distinct. On $\mathcal{U}$, these $\lambda_j$ are analytic functions. 
Finally the eigenvalue map  $(x,\xi)\mapsto (\lambda_1,\ldots,\lambda_{n-2})$ were not of full rank at some $(x_0,\xi_0)\in\calU$, we would have a linear dependence  $\sum_{j=1}^{n-2} c_jd\lambda_j(x_0,\xi_0)=0$. Pairing this form with the same variation $(\dot x,\dot \xi)$ considered above, we obtain $c_i=0$. As this is true for every $i$, indeed the differentials of the $\lambda_j$ are linearly independent on $\calU$. 
\end{proof}

\begin{cor}
\label{PCALFULLRANK}
	The moment map $\pcal$ introduced in \eqref{pcaldef} is of full rank in an open dense subset $\calU\subseteq S^*\SE_A$.
\end{cor}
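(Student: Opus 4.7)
The plan is to factor the moment map $\pcal$ as a composition $E \circ \Lambda$, where $\Lambda : S^*\SE_A \to \RR^{n-2}$ assembles the nonzero eigenvalues $(\lambda_1(x,\xi),\ldots,\lambda_{n-2}(x,\xi))$ of $L(x,\xi)$ and $E : \RR^{n-2}\to\RR^{n-2}$ is the map $\vec\lambda \mapsto (e_1(\vec\lambda),\ldots,e_{n-2}(\vec\lambda))$ to its elementary symmetric functions. By the chain rule, full rank of $\pcal$ at a point reduces to full rank of each factor.

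First I would appeal directly to Proposition \ref{prop:strongmult} to produce the open dense set $\calU \subseteq S^*\SE_A$ on which $\Lambda$ is smooth and of full rank $n-2$, and on which the $\lambda_j$ are pairwise distinct. Next, on $\Lambda(\calU)$ I would invoke the classical Vandermonde identity for the Jacobian of the elementary symmetric functions,
\[
\det\left(\frac{\partial e_i}{\partial \lambda_j}\right) = \pm\prod_{i<j}(\lambda_i - \lambda_j),
\]
to conclude that $E$ is a local diffeomorphism on $\Lambda(\calU)$, since its Jacobian vanishes only on the coincidence set $\{\lambda_i = \lambda_j\}$ which $\calU$ has been arranged to avoid. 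Combining these two facts through the chain rule $d\pcal = dE \circ d\Lambda$ then yields the full-rank statement on the entire open dense subset $\calU$.

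This argument has no substantial obstacle; the analytic content has already been extracted in Proposition \ref{prop:strongmult}. The only point requiring a moment of care is that passage from the unordered spectrum of $L(x,\xi)$ to a specific ordered tuple $(\lambda_1,\ldots,\lambda_{n-2})$ requires the eigenvalues to be distinct, which is precisely the defining condition of $\calU$; outside $\calU$ one cannot even make sense of $\Lambda$ as a smooth map, but the symmetric functions $e_i$ remain globally well-defined, so the corollary is stated as an open-dense rather than global statement.
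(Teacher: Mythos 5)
Your argument is correct and is essentially the paper's own proof: both rest on Proposition \ref{prop:strongmult} for the open dense set $\calU$ where the nonzero eigenvalues are distinct and the eigenvalue map has full rank, combined with the Vandermonde factorization $d e_1\wedge\cdots\wedge d e_{n-2}=\Delta(\vec\lambda)\,d\lambda_1\wedge\cdots\wedge d\lambda_{n-2}$, which is nonvanishing off the coincidence set. Phrasing it as a chain-rule composition $E\circ\Lambda$ rather than via wedge products of differentials is only a cosmetic difference.
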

\begin{proof}
the  elementary symmetric functions in the nonzero eigenvalues $\lambda_j$ are given by $$e_j (\vec \lambda)=
\sum_{1 \leq p_1 < \cdots < p_j \leq n-2} \lambda_{p_1} \cdots \lambda_{p_j}.$$
It is then well known that
$$d_{\vec \lambda}  e_1(\vec \lambda  ) \wedge \ldots \wedge d_{\vec \lambda} e_{n-2} (\vec \lambda) = \Delta(\vec \lambda) \prod_{j=1}^{n-2}
d \lambda_j, $$
where  $ \Delta(\lambda_1, \dots,
\lambda_{n-2}) = \prod_{1 \leq j < k \leq n-2} (\lambda_k - \lambda_j)$ is the
Vandermonde determinant. It follows that
\begin{equation} \label{DET0} d_{x,\xi}  e_1(\vec \lambda (x_0, \xi_0) ) \wedge \cdots \wedge d_{x,\xi} e_{n-2} (\vec \lambda (x_0, \xi_0)) =
\Delta(\vec \lambda) \prod_{j=1}^{n-2}
d_{x,\xi}  \lambda_j. \end{equation}

Noting that $\Delta(\vec \lambda) =0$ precisely on the coincidence set $\mathcal{D}\subset \calU^c$ where at least two eigenvalues coincide, the maximality of rank for $\mathcal{P}:(x,\xi)\mapsto (e_1,\ldots,e_{n-2})$ follows from Proposition \ref{prop:strongmult}.
\end{proof}

\section{Proof of Proposition \ref{PSFSING}}

The purpose of this section is to prove:
\begin{prop} \label{PSFSING} 
 Suppose that $x_0$ is a self-focal point of an ellipsoid $\ecal_A$ with pairwise different principal axes. Then for
all $\xi \in S^*_{x_0} \ecal_A$,  $(x_0, \xi)$ is either singular or has a non-compact orbit. In pariticular, $(x_0, \xi)$ lies on a singular component of the moment map.  \end{prop}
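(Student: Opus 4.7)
My plan is to argue by contradiction. Suppose some $\xi_0 \in S^*_{x_0}\ecal_A$ is a regular point with compact $\R^{n-1}$-orbit. The strategy proceeds in three ingredients.

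First, produce a nice neighborhood and good coordinates. By Lemma \ref{OD} the set of regular compact points is open, so a neighborhood $W \subset S^*\ecal_A$ of $(x_0,\xi_0)$ is foliated by Liouville tori. By Corollary \ref{ISOENDPROP2}, iso-energetic non-degeneracy holds on an open dense subset of $S^*\ecal_A$; since every $\xi \in S^*_{x_0}\ecal_A$ inherits the self-focal property, I may replace $\xi_0$ by a nearby vector in $S^*_{x_0}\ecal_A$ and assume iso-energetic non-degeneracy at $(x_0,\xi_0)$. The Liouville-Arnold theorem (Theorem \ref{LATH}) then gives action-angle coordinates $(I,\theta) \in D \times \T^{n-1}$ on $W$, with $G^t(I,\theta)=(I,\theta+t\omega(I))$ and $\omega = \nabla_I H$.

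Second, translate self-focality into a constraint on these coordinates. Set $V = W \cap S^*_{x_0}\ecal_A$, a smooth $(n-2)$-dim submanifold, and write each $\xi \in V$ as $(I(\xi),\theta(\xi))$. The self-focal hypothesis, applied to every $\xi \in V$, forces the discrete orbit $\{G^{kT}(x_0,\xi)\}_{k \in \Z}$ to remain in $S^*_{x_0}\ecal_A$; since $G^t$ preserves each Liouville torus, this orbit actually lies in the closed subset $S^*_{x_0}\ecal_A \cap T_{I(\xi)}$.

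Third, use iso-energetic non-degeneracy to find a non-resonant $\xi_*$ and force a dimension contradiction. In the generic case $\xi \mapsto I(\xi)$ is an immersion of the $(n-2)$-manifold $V$ onto an open piece of the $(n-2)$-dim energy surface in action space. Post-composing with the iso-energetic immersion $I \mapsto [\omega(I)] \in \mathbb{R}\mathbb{P}^{n-2}$ yields a local diffeomorphism $V \to \mathbb{R}\mathbb{P}^{n-2}$; since non-resonant directions are dense in $\mathbb{R}\mathbb{P}^{n-2}$, I may pick $\xi_* \in V$ for which the translation by $T\omega(I(\xi_*))$ generates a dense subgroup of $T_{I(\xi_*)}$. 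The orbit $\{G^{kT}(x_0,\xi_*)\}_k$ is then dense in $T_{I(\xi_*)}$; being contained in the closed set $S^*_{x_0}\ecal_A \cap T_{I(\xi_*)}$, this forces $T_{I(\xi_*)} \subseteq S^*_{x_0}\ecal_A$, which contradicts $\dim T_{I(\xi_*)} = n-1 > n-2 = \dim S^*_{x_0}\ecal_A$. The ``in particular'' assertion then follows from Lemma \ref{NONCPTPROP}.

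The main obstacle is the degenerate sub-case where $\xi \mapsto I(\xi)$ has rank $k < n-2$ at $\xi_0$. Then $V$ has $(n-2-k)$-dim fibers over points $I_* \in I(V)$, each fiber being a closed submanifold of $T_{I_*}$ invariant under translation by $T\omega(I_*)$. A closed submanifold of $\T^{n-1}$ of positive codimension can be invariant under such a translation only if $\omega(I_*)$ is resonant, so the iso-energetic immersion $I \mapsto [\omega(I)]$ pushes $I(V)$ into the countable union of resonant hyperplanes in $\mathbb{R}\mathbb{P}^{n-2}$; this immediately rules out $k \geq n-2$ and, by comparing $\dim(\text{fiber}) = n-2-k$ against the $\Q$-rank of the translation (which itself is bounded above using iso-energetic non-degeneracy), rules out the remaining values of $k$ as well, yielding the same dimension contradiction.
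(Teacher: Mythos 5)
Your overall architecture (restrict to the tori through $S^*_{x_0}\ecal_A$, use iso-energetic non-degeneracy to find a ``very irrational'' torus, and derive a dimension contradiction) is in the same spirit as the paper's Lemma \ref{REG}, but your execution has a genuine gap at the decisive step. You need the \emph{discrete} orbit $\{G^{kT}(x_0,\xi_*)\}_{k\in\Z}$ to be dense in the $(n-1)$-torus, because your contradiction comes from its containment in the $(n-2)$-dimensional set $S^*_{x_0}\ecal_A$. Density of the time-$T$ translation requires the inhomogeneous condition that $\langle \vec k, T\omega\rangle \notin 2\pi\Z$ for all $\vec k\neq 0$; this is strictly stronger than non-resonance $\langle \vec k,\omega\rangle=0$, and it is \emph{not} a condition on the projective class $[\omega]$. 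So the density of non-resonant directions in $\R\PP^{n-2}$, pulled back through your local diffeomorphism $V\to\R\PP^{n-2}$, does not produce the desired $\xi_*$: the image of $I(V)$ under $\omega$ is an immersed codimension-one piece of $\R^{n-1}$ which could lie entirely in an affine hyperplane $\{\langle\vec k,T\omega\rangle=2\pi m\}$ with $m\neq 0$, and this is fully compatible with iso-energetic non-degeneracy (the projectivization of such an affine hyperplane is open in $\R\PP^{n-2}$, so $I\mapsto[\omega(I)]$ can still be an immersion). In that scenario every time-$T$ orbit closure is a union of cosets of a proper subtorus of dimension $n-2$, and your count $n-1>n-2$ yields nothing. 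The paper sidesteps exactly this issue by working with the \emph{continuous} geodesic orbit inside the flow-out $\Lambda_{x_0}\simeq S^1\times S^{n-2}$: for the flow orbit, non-resonance of $\omega$ (which \emph{is} a projectively generic, homogeneous condition) already gives density in the torus, and the contradiction is topological (a Liouville torus cannot sit inside $\Lambda_{x_0}$), followed by a countability/Baire argument to extract a single resonance on an open set and contradict iso-energetic non-degeneracy.

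Two further steps need repair. First, iso-energetic non-degeneracy is only known on an open \emph{dense} subset of $S^*\ecal_A$ (Corollary \ref{ISOENDPROP2}); a dense open set can entirely miss the positive-codimension submanifold $S^*_{x_0}\ecal_A$, so you cannot ``replace $\xi_0$ by a nearby vector in $S^*_{x_0}\ecal_A$'' at which non-degeneracy holds. In your generic case this is patchable (since $I(V)$ is open in the energy surface, it meets the good set of action values, and non-degeneracy is a property of the torus), but in the degenerate case, where $I(V)$ may be thin, no such patch is available and your appeal to the ``iso-energetic immersion'' there is unjustified. Second, in the degenerate case the fibers of $I|_V$ are the local sets $W\cap S^*_{x_0}\ecal_A\cap T_{I_*}$, which are \emph{not} invariant under $G^T$ (the return map can leave $W$); only the global compact set $S^*_{x_0}\ecal_A\cap T_{I_*}$ is invariant, and it need not be a submanifold. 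Moreover ``invariant proper closed subset implies $\omega(I_*)$ resonant'' again substitutes the homogeneous condition for the correct inhomogeneous one, and the closing comparison of the fiber dimension with the ``$\Q$-rank of the translation'' is too vague to check. As written, then, the proposal does not close; the cleanest fixes are either to switch to the flow orbit inside $\Lambda_{x_0}$ (the paper's route), or to add an argument excluding the affine-resonance scenario, e.g.\ by observing that it would force an $(n-2)$-parameter family of disjoint $(n-2)$-dimensional orbit closures inside the $(n-2)$-dimensional manifold $S^*_{x_0}\ecal_A$, which is impossible.
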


There are two possibilities: 
\begin{itemize}
\item $(x, \xi)$ is a singular point of $\pcal$, and its orbit is of dimension $< n-1$ (See Definition \ref{SING} and Section
\ref{SINGPTSECT}); \bigskip

\item $(x, \xi)$ is a regular point of $\pcal$ but its $\R^{n-1}$ orbit is non-compact of type ${\mathbb T}^k \times \R^{n-1-k}$. This implies
existence of an open set $U $ of regular points containing $(x, \xi)$ and an open dense set of compact regular orbits 
( (see Lemma \ref{OD} and  Proposition \ref{NONCPTPROP}).
%Consider ${\mathbb T}^k \times \R^{n-1-k} (x, \xi) \cap S^*_{x_0} \ecal_A$. 
%\edit{For a proper moment map, this seems impossible: level sets are compact and consist of finitely many orbits (proof?) }

 In this case there exist 
angle coordinates on ${\mathbb T}^k$ and linear coordinates $y$ on $\R^{n-1-k}$  so that the flow is equivalent to $G^t(I, \phi, I', y) = (I, \phi + t \omega_I, y + t \omega_{y} )$ for some $\omega_I \in \R^k, \omega_{y} \in \R^{n-1-k}$.
The $\omega$ (and $\alpha$) limit set of  the geodesic orbit $G^t(x, \xi)$ lies in a  compact $G^t$-invariant  set, and every point of intersection with $S^*_{x_0} \ecal_A$ is a singular point. Recall that the limit set is compact, connected and invariant.

\end{itemize}

We now prove Proposition \ref{PSFSING}. 
% For notational simplicity,  denote $G = \R^{n-1}$. 
\begin{lem}\label{REG}  Suppose that $x_0$ is a self-focal point of a multi-axial  ellipsoid. Suppose that the set $(S^*\ecal_A)_{reg, c}$ of
compact regular orbits is open dense (Lemma \ref{OD}). Suppose that the geodesic flow is isoenergetically non-degenerate. Then,  for any $(x_0, \xi) \in S^*_{x_0} M$, there does not exist $\xi \in S^*_{x_0} \ecal_A$ such that $\R^{n-1} \cdot (x_0, \xi)$ is a compact torus of dimension $n-1$. \end{lem}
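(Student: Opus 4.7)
The plan is to argue by contradiction. Suppose some $\xi_0 \in S^*_{x_0}\ecal_A$ is such that $\Lambda_0 := \R^{n-1}\cdot(x_0, \xi_0)$ is a Liouville torus; by Theorem \ref{LATH} a neighborhood of $(x_0, \xi_0)$ in $S^*\ecal_A$ admits action-angle coordinates $(I, \theta) \in V \times \T^{n-1}$ in which $H = F(I)$ and $G^t(I, \theta) = (I, \theta + t\,\nabla F(I))$. Openness of the compact regular locus ensures that every $\xi$ close to $\xi_0$ in $\Sigma := S^*_{x_0}\ecal_A$ still lies on a Lagrangian torus $\Lambda_\xi$ with action $I(\xi) \in \{F=1\}$ and frequency $\omega_\xi := \nabla F(I(\xi))$.

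The first substantive step is to select a nearby $\xi' \in \Sigma$ whose frequency is \emph{Kronecker-generic}, meaning that $1, \omega_{\xi',1}, \ldots, \omega_{\xi',n-1}$ are $\Q$-linearly independent. The fibers of the map $\xi \mapsto I(\xi)$ are $\Sigma \cap \Lambda_I$, which by the dimension count $(n-2)+(n-1)-(2n-3) = 0$ are generically discrete, so this map has full rank $n-2$ on a dense open subset of $\Sigma$ and its image covers an open subset of the $(n-2)$-dimensional surface $\{F=1\}$. Iso-energetic non-degeneracy (Proposition \ref{ISOENDPROP}) combined with the homogeneity from Lemma \ref{omega0} then shows that the composite $\xi \mapsto \omega_\xi$ sweeps out an open $(n-2)$-dimensional subset of frequency space, which intersects the residual set of Kronecker-generic frequencies.

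With such a $\xi'$ in hand, self-focality at $x_0$ closes the argument: the $\Z$-orbit $\{G^{2\pi k}(x_0, \xi')\}_{k\in\Z}$ lies in $S^*_{x_0}\ecal_A$, and by $G^t$-invariance of the torus it also lies in $\Lambda_{\xi'}$. In action-angle coordinates on $\Lambda_{\xi'}$ it reads $\{\theta(\xi') + 2\pi k \omega_{\xi'}\}_{k\in\Z}$, which by Kronecker-genericity is dense in $\T^{n-1}$. The closed subset $\pi^{-1}(x_0) \cap \Lambda_{\xi'}$ therefore contains a dense subset of $\Lambda_{\xi'}$, so equals $\Lambda_{\xi'}$, forcing $\Lambda_{\xi'} \subseteq T^*_{x_0}\ecal_A \cap S^*\ecal_A = S^*_{x_0}\ecal_A$. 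But $\Lambda_{\xi'} \cong \T^{n-1}$ has dimension $n-1$ while $S^*_{x_0}\ecal_A \cong S^{n-2}$ has dimension $n-2$, impossible for $n \geq 3$.

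The main obstacle is step two: verifying that iso-energetic non-degeneracy genuinely produces Kronecker-generic frequencies. The image $\omega(\{F=1\})$ is only $(n-2)$-dimensional in $\R^{n-1}$ and could \emph{a priori} sit inside a single rational hyperplane, so merely knowing that the direction map $I \mapsto [\omega_I]$ is a local diffeomorphism onto $\R\PP^{n-2}$ is not automatically enough. One handles this by combining the degree-one homogeneity of $\omega_I$ with the non-vanishing Schur-complement expression of iso-energetic non-degeneracy to rule out such a coincidence.
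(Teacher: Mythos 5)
Your overall strategy (action--angle variables, self-focality trapping the $2\pi$-iterates in $S^*_{x_0}\ecal_A$, and a density argument forcing a torus into a fiber of $\pi$) is the same circle of ideas as the paper's proof, but the step you yourself flag as the main obstacle is a genuine gap, and it is not the one you think. The unsupported claim is that $\xi\mapsto I(\xi)$ has full rank $n-2$ on a dense open subset of $\Sigma=S^*_{x_0}\ecal_A$ ``by the dimension count $(n-2)+(n-1)-(2n-3)=0$.'' An expected-dimension count proves nothing here: $\Sigma$ is a specific submanifold with no transversality to the Liouville foliation available, and in fact the paper proves (Proposition \ref{SINGLEAF}, Lemma \ref{SINGLEVEL}) that at a self-focal point $S^*_{x_0}\ecal_A$ lies in a \emph{single} level set of the moment map, i.e.\ in the situation actually under study the map you declare to be generically of rank $n-2$ is constant. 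Inside your reductio you likewise cannot exclude that all the nearby Liouville tori meeting $\Sigma$ have actions confined to a set of dimension $<n-2$ (or all resonant), so the assertion that $\xi\mapsto\omega_\xi$ sweeps out an open $(n-2)$-dimensional set of frequencies, and hence meets the residual set of Kronecker-generic ones, is exactly the point where the non-degeneracy hypotheses must do work, and your write-up does not make them do it.

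There is a second, independent problem: because you iterate the time-$2\pi$ map rather than the flow, you need to exclude the \emph{affine} resonances $\langle\vec k,\omega\rangle=m$ with $m\neq 0$, not just $\langle\vec k,\omega\rangle=0$. Iso-energetic non-degeneracy says that rays through the origin are transverse to the $(n-2)$-dimensional frequency image, which rules out an open piece of that image lying in a linear hyperplane through $0$, but it does not prevent it from lying in an affine hyperplane off the origin; your proposed repair (``homogeneity plus the Schur complement'') is a gesture, not an argument. This second issue can be avoided by using the continuous-time orbit: since the orbit of $(x_0,\xi')$ is trapped in the compact flow-out $\Lambda_{x_0}$ of \eqref{FLDEF}, mere non-resonance of $\omega$ would force the whole torus into $\Lambda_{x_0}\simeq S^1\times S^{n-2}$, a topological contradiction. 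Indeed this is how the paper argues, in contrapositive form: every compact-regular point of $S^*_{x_0}\ecal_A$ has a non-dense orbit for topological reasons, hence carries a linear resonance $\langle\vec k,\omega_I\rangle=0$, a fixed $\vec k$ persists on a neighborhood, and that contradicts iso-energetic non-degeneracy. But even after this repair, the first gap --- showing that the frequencies attained along $S^*_{x_0}\ecal_A$ are spread out enough to escape countably many resonance hyperplanes --- remains, and without it your proposal does not close.
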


In combination with  Proposition \ref{ISOENDPROP2}, Lemma \ref{REG} implies, 

\begin{cor} \label{PFSINGCOR} If $x_0$ is a self-focal point of a multi-axial  ellipsoid, then  for any $(x_0, \xi) \in S^*_{x_0} M$, there does not exist $\xi \in S^*_{x_0} \ecal_A$ such that $\R^{n-1} \cdot (x_0, \xi)$ is a compact torus of dimension $n-1$. 

\end{cor}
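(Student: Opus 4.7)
The plan is to assume for contradiction that some $\xi_0 \in S^*_{x_0}\ecal_A$ makes the orbit $\R^{n-1}\cdot(x_0, \xi_0) = T_{I_0}$ a compact Lagrangian torus, and then iterate the self-focal loop map on $S^*_{x_0}\ecal_A$ to get an orbit inside the torus that is simultaneously forced to be infinite (by iso-energetic non-degeneracy) and finite (by a transverse-Lagrangian dimension count).

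First I would set up the local picture. By the assumed openness of $(S^*\ecal_A)_{\reg, c}$, a neighborhood $U$ of $(x_0,\xi_0)$ in $S^*\ecal_A$ is foliated by Lagrangian tori $T_{I'}$, $I'\in B\subset\bcal$, and by Liouville--Arnold I can introduce action-angle coordinates $(\phi, I)$ on $U$ in which $G^t(\phi, I) = (\phi + t\omega_I, I)$ with $\omega_I = \nabla F(I)$, $H = F(I)$. A dimension count $(n-2)+(n-1)-(2n-3) = 0$ in $S^*\ecal_A$ shows that for generic $I'$ near $I_0$ the Lagrangian cotangent fiber $\pi^{-1}(x_0)$ meets $T_{I'}$ in a finite transverse set. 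A short transversality check then shows that $\xi' \mapsto I(\xi')$ is a local diffeomorphism from a neighborhood of $\xi_0$ in $S^*_{x_0}\ecal_A$ onto a neighborhood of $I_0$ in the $(n-2)$-dimensional energy surface $\{F=1\}$.

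Next I would iterate self-focality. Self-focality with common return time $T$ forces $G^T(S^*_{x_0}\ecal_A) = S^*_{x_0}\ecal_A$; since $G^t$ preserves each torus $T_{I'}$, one obtains for every $\xi'$ in the neighborhood
\[
\{G^{kT}(x_0, \xi')\}_{k\in\Z} \subseteq T_{I(\xi')} \cap S^*_{x_0}\ecal_A,
\]
a finite set. Reading this orbit in angle coordinates as the cyclic subgroup $\{\phi(\xi') + kT\omega_{I(\xi')}\pmod{\Gamma_{I(\xi')}}\}$ of the torus $\R^{n-1}/\Gamma_{I(\xi')}$, its finiteness is equivalent to the resonance $T\omega_{I(\xi')}\in \Q\cdot\Gamma_{I(\xi')}$: there exist a positive integer $N$ and integers $(m_j)$ with $NT\omega_{I(\xi')} = \sum_j m_j \gamma_j(I(\xi'))$ for a locally smooth basis $\gamma_j(I)$ of the period lattice.

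Finally I would use iso-energetic non-degeneracy to exhibit a nearby $\xi'$ violating the resonance. Iso-energetic non-degeneracy says $[\omega]\colon\{F=1\}\to\R\PP^{n-2}$ is a local immersion, hence a local diffeomorphism since both are $(n-2)$-dimensional; composed with $\xi'\mapsto I(\xi')$ the assignment $\xi'\mapsto[\omega_{I(\xi')}]$ has open image. For each fixed pair $(N, (m_j))$ the resonance equation is a real analytic relation on $\{F=1\}$, so its solution set is either a proper analytic subvariety (hence meager) or all of a neighborhood of $I_0$. In the latter case, differentiating and restricting to $T_{I_0}\{F=1\}$ gives $NTD^2F|_{T\{F=1\}} = \sum_j m_j\, D\gamma_j|_{T\{F=1\}}$; combined with $\langle\omega, \cdot\rangle=0$ on $T\{F=1\}$ this would yield a rank relation forcing the bordered Hessian in \eqref{DET} to vanish, contradicting iso-energetic non-degeneracy. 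The countable union of the resonance sets is therefore meager, so I can find $\xi'$ arbitrarily close to $\xi_0$ with $T\omega_{I(\xi')}\notin \Q\cdot\Gamma_{I(\xi')}$; for such $\xi'$ the cyclic orbit above is infinite yet contained in a finite set, the desired contradiction. I expect the main obstacle to be the rigorous exclusion of identically-holding resonance relations: one must carefully track the variation $D\gamma_j(I)$ of the period-lattice basis and check that the resulting identity between $D^2F|_{T\{F=1\}}$ and $\sum m_j D\gamma_j|_{T\{F=1\}}$ is genuinely incompatible with the nonvanishing of the bordered Hessian \eqref{DET}, which is the technical heart of the argument.
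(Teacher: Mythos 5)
There is a genuine gap at the step you dismiss as ``a short transversality check.'' Your whole contradiction hinges on the claim that $\xi'\mapsto I(\xi')$ is a local diffeomorphism from a neighborhood of $\xi_0$ in $S^*_{x_0}\ecal_A$ onto a neighborhood of $I_0$ in $\{F=1\}$ (equivalently, that $S^*_{x_0}\ecal_A$ is transverse to the Liouville foliation near $\xi_0$, so that $T_{I(\xi')}\cap S^*_{x_0}\ecal_A$ is locally discrete). The dimension count $(n-2)+(n-1)=2n-3$ only says that transversality is \emph{possible}; it does not hold automatically at the given point $(x_0,\xi_0)$, and it genuinely can fail at regular compact points: on a surface of revolution, at a direction $\xi_0$ tangent to the parallel through $x_0$, the point $(x_0,\xi_0)$ lies on a regular Liouville torus while the Clairaut integral restricted to the fiber circle $S^*_{x_0}$ is critical there, so $\xi\mapsto I(\xi)$ is not a local diffeomorphism. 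Worse, in the situation you are trying to rule out, total failure of transversality is exactly the expected behavior: at an actual self-focal point the eigenvalue moment map is \emph{constant} on $S^*_{x_0}\ecal_A$ (Lemma \ref{SINGLEVEL}), so you cannot exclude a priori that $I|_{S^*_{x_0}\ecal_A}$ is rank-deficient on a whole neighborhood of $\xi_0$, in which case your finiteness of $T_{I(\xi')}\cap S^*_{x_0}\ecal_A$ and your pullback of the ``non-resonant $I$ are dense'' statement to $S^*_{x_0}\ecal_A$ both collapse. Ruling this degeneracy out is the real content of the statement, and your proposal supplies no argument for it; Sard-type genericity in $I'$ does not help, since the tori you must use are precisely those through points of $S^*_{x_0}\ecal_A$, whose action values may all be critical values.

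The paper avoids this issue by a different mechanism: it never needs the fiber sphere to be transverse to the foliation. Instead it uses self-focality to produce the compact $G^t$-invariant flow-out $\Lambda_{x_0}\simeq S^1\times S^{n-2}$ of \eqref{FLDEF}, observes that the geodesic orbit of $(x_0,\xi)$ lies in $\Lambda_{x_0}\cap T_I$, which by the topological mismatch has codimension $\geq 1$ in the torus, so the orbit cannot be dense in $T_I$; hence $\omega_I$ is resonant, and a fixed resonance relation on an open set contradicts iso-energetic non-degeneracy (Proposition \ref{ISOENDPROP2}). Your later steps (finiteness of the cyclic orbit $\{\phi+kT\omega_I\}$ iff $T\omega_I\in 2\pi\Q^{n-1}$ in action-angle coordinates, where the period lattice is constant so no $D\gamma_j$ bookkeeping is needed, and discreteness of each fixed resonance set under iso-energetic non-degeneracy) are essentially fine, but they cannot be run on $S^*_{x_0}\ecal_A$ until the transversality/local-diffeomorphism claim is either proved or replaced by an argument, such as the paper's use of $\Lambda_{x_0}$, that does not require it.
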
 
 Proposition \ref{PSFSING} follows from Corollary \ref{PFSINGCOR}. Hence, to complete the proof it suffices to prove Lemma \ref{REG}.

\begin{proof}

 If there exists $(x_0,\xi) \in (S^*\ecal_A)_{reg, c} \cap S^*_{x_0} \ecal_A$ then $ (S^*\ecal_A)_{reg, c} \cap S^*_{x_0} \ecal_A$ is open 
dense in $S^*_{x_0} \ecal_A$.  We may introduce action-angle variables $(I, \phi)$ in an open neighborhood $U$ of $(x_0, \xi)$.   For each point $(x_0, \xi)$ in this set,  we consider the $G^t$ orbit $G^t(x_0, \xi)$ in the compact torus
orbit $T_I = \R^{n-1}(x_0, \xi)$. It lies in the intersection $\Lambda_{x_0}
\cap \R^{n-1} (x_0, \xi).$ Since  $\Lambda_{x_0} \simeq S^1 \times S^{n-2}$ and  $ \R^{n-1} (x_0, \xi)$ has a different topology, 
the intersection has codimension $\geq 1$ in both Lagrangian submanifolds. Therefore the geodesic
cannot be dense in the compact torus orbit $\R^{n-1} (x_0, \xi).$ 

Let $\omega_I = \nabla_I H : U \to \R^{n-1}$ be the frequency vector map. The geodesic flow is given on $T_I$ by 
$G^t(I, \phi) = (I, \phi + t \omega_I)$. Since the orbit is not dense, $\omega_I $ is resonant for all points in $U$. Resonance means that, for each
$(x_0, \xi) \in U$, there exists
$\vec k \in \Z^{n-1}$ so that $\langle \omega_I(x_0, \xi) , \vec k \rangle =0$. Since $\vec k \in \Z^{n-1}$ and $\Z^{n-1}$ is totally disconnected, it must be the case that
there exists $\vec k \in \Z^{n-1}$ so that  $\langle \omega_I(x_0, \xi) , \vec k \rangle =0$ for all $(x_0, \xi) \in U$. This contradicts iso-energeticity.

\end{proof}

\subsection{Proof of Propositions \ref{SINGLEAF} and \ref{DEGPROP} }

We assume throughout this section that $\ecal_A$ is multi-axial. We now prove Propositions \ref{SINGLEAF} and \ref{DEGPROP}.

\subsubsection{Proof of Proposition   \ref{SINGLEAF} }

\begin{proof}   Since the orbits in an open dense set of $S^* \ecal_A$ are Lagrangian tori, 
an open dense set
of points of $S^*_H M$ have regular compact orbits. The intersections of these orbits with $S^*_H M$  are Lagrangian submanifolds which we refer to
as the `regular c-leaves' of the reduced foliation of  $S^*_H M$.

The regular $c$-leaves  cannot intersect $S^*_{x_0} \ecal_A$ by Proposition \ref{REG}, i.e. the regular $c$-leaves are disjoint from $S^*_{x_0} \ecal_A$.  If $S^*_{x_0} \ecal_A$ is {\it not}
a leaf of this foliation, then it must intersect some leaves of the foliation and in particular must intersect a regular compact leaf (because they are open dense).
By Proposition \ref{REG}, this cannot happen. Hence, $S^*_{x_0} \ecal_A$ must be a singular leaf of the foliation. 
\end{proof}

\begin{lem}\label{SINGLEVEL}
  The eigenvalue moment map is constant on $S^*_{x_0} \ecal_A$.

\end{lem}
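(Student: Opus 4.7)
The plan is to obtain Lemma~\ref{SINGLEVEL} as a short formal consequence of Proposition~\ref{SINGLEAF}, which has just been established. That proposition asserts that $S^*_{x_0}\ecal_A$ is a (singular) leaf of the reduced foliation of the symplectic transversal $S^*_H\ecal_A$. Once one notes that every leaf of the reduced foliation is automatically contained in a single fibre of the eigenvalue moment map $\pcal$, the conclusion is immediate.

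First I would make this general fact about the reduced foliation explicit. By the construction of the reduced foliation (see Section~\ref{OUTLINESECT}), its leaves are connected components of the intersection of $\R^{n-1}$-orbits of the Hamiltonian action generated by $p_1,\dots,p_{n-2}$ with the transversal $S^*_H\ecal_A$. Since the $p_j$ Poisson-commute, each $p_j$ is invariant under every Hamiltonian flow $\exp(tH_{p_k})$, so the full map $\pcal=(e_1(\vec\lambda),\dots,e_{n-2}(\vec\lambda))$ is constant on every $\R^{n-1}$-orbit, and hence on every connected component of the orbit's intersection with the transversal — that is, on every leaf of the reduced foliation.

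Applying this observation to the particular leaf $S^*_{x_0}\ecal_A$ supplied by Proposition~\ref{SINGLEAF} forces $\pcal|_{S^*_{x_0}\ecal_A}$ to take a single value, proving the lemma. The substantive geometric content — namely excluding the possibility that $S^*_{x_0}\ecal_A$ is transverse to a regular compact Liouville leaf (which would otherwise force its image under $\pcal$ to contain an open set and so be generically regular) — was already handled in the proof of Proposition~\ref{SINGLEAF} via Propositions~\ref{REG} and~\ref{PSFSING} together with the iso-energetic non-degeneracy of multi-axial ellipsoids from Corollary~\ref{ISOENDPROP2}. The only point one needs to verify here is the compatibility of the word ``leaf'' as used in Proposition~\ref{SINGLEAF} with the constancy of $\pcal$ on each leaf, and this is built into the definition of the reduced foliation; thus no further obstacle arises and the proof is entirely formal.
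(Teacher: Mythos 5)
Your argument is essentially the paper's own proof: the paper also deduces the lemma directly from the fact (established in Proposition \ref{SINGLEAF}) that $S^*_{x_0}\ecal_A$ is a leaf of the reduced foliation, and you merely make explicit the routine justification that $\pcal$ is constant on each $\R^{n-1}$-orbit (by Poisson commutativity of the $p_j$) and hence on each leaf of the reduced foliation. You also correctly use only the ``leaf'' clause of Proposition \ref{SINGLEAF}, which is proved independently of the single-level-set clause, so there is no circularity.
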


\begin{proof} Since $S^*_{x_0} \ecal_A$ is a leaf of the reduced foliation by Lemma \ref{SINGLEAF}, the eigenvalue moment map is constant on $S^*_{x_0} \ecal_A$.
\end{proof}
This completes the proof of Proposition \ref{SINGLEAF}.

\subsubsection{Proof of Proposition \ref{DEGPROP}}

\begin{proof}  If  the eigenvalues of $L(x_0, \xi)$ correspond to  non-degenerate quadrics,  we get the contradiction
that  $S^*_{x_0} \ecal_A$ spans $T_{x_0} \ecal_A$ and this tangent space cannot be tangent to a single non-degenerate quadric.  Hence,
every one of the quadrics must be degenerate. Apparently, this can only happen when every $\lambda_j $ is an eigenvalue of $A$, which forces
the points to have $x_j =0$ for $n-2 $ j's. Hence $x_0$ must lie on the intersection of $\ecal_A$ with $n-2$ hyperplanes, i.e. be a sub-ellipse (and
closed geodesic).

\end{proof}

\section{Proof of Theorem \ref{THEODISTINCT}}
In this section, we prove the main result (Theorem \ref{THEODISTINCT})  for ellipsoids with distinct axes.

\begin{proof}

By Lemma \ref{SINGLEVEL}, the moment map is constant on $S^*_{x_0} \ecal_A$.  By Proposition \ref{DEGPROP}, 
the quadrics associated to the eigenvalues must all be degenerate.  By  Lemma \ref{JACINT}, Corollary \ref{lambdaalpha} and Lemma
\ref{AUDINLEM}, for all $j$, $\lambda_j = a_j$ and $x_j =0$ for $n-2 $ values of $j$. That is, $x_0$ must lie on one of the coordinate
ellipses obtained by slicing with the codimension two plane $x_j = 0$ in $\R^n$. We recall here that $A$ has been diagonalized, so that
its eigenvectors are the standard basis $e_j$ of $\R^n$. 
We now argue that no such $x_0$ can perfect focal points.  

Let us denote the index of the one non-zero component of $x_0$ by $j$. 
Then, consider any three-dimensional subspace  $\R^3_{i, j, k }  = \rm{Span}\{e_i, e_j, e_k \}  \subset \R^n$ and the corresponding
totally geodesic two-dimensional ellipsoid $\ecal_{a_i, a_j, a_k}$  obtained by slicing $\ecal_A$ with $\R^3_{i, j, k }$. Then $x_0$ must be a perfect focal
point of $\ecal_{a_i, a_j, a_k}$. Hence , $x_0$ must be an umbilic point of each $\ecal_{a_i, a_j, a_k}$  as $(j,k)$ range over pairs
of distinct indices $j < k$ with $j > i$. 

We then recall that if  $A = (a_i^2, a_j^2,a_k^2) $ with $0 < a_k < a_j < a_j$, then  an  umbilic point is  given by
\begin{equation} \label{UMBILIC} x =
% (\pm \sqrt{\frac{a(a-b)}{a-c)}}, 0, \pm \sqrt{ \frac{(b-c) c}{a-c}}) 
 (\pm \sqrt{\frac{a_i^2 - a_j^2}{a_i^2 - a_k^2}}, 0, \pm  \sqrt{\frac{a_j^2 - a_k^2}{a_i^2 - a_k^2}}) = \pm \sqrt{\frac{a_i^2 - a_j^2}{a_i^2 - a_k^2}} e_1 + \pm  \sqrt{\frac{a_j^2 - a_k^2}{a_i^2 - a_k^2}} e_3.  \end{equation}
 In particular, $x$ must lie on the middle  coordinate ellipse where $x_2= 0$ so  $a_j$ cannot be the lowest or highest eigenvalue. But the main point is that $x_0$ must be umbilic for
 all choices of $a_i, a_k$ and therefore the expressions for $x$ can only differ by a sign as $a_i, a_k$ vary. Thus, $a_k$ can be any
 eigenvalue $< a_j$ and $a_i$ can be any eigenvalue $> a_j$. 
 
 A little bit of algebra shows
 that this cannot be true if the $a_j$ are distinct. To make this clear we assume write two triples of $(a_i, a_j, a_k)$ with the same $a_j$
 and denote the second by $(b_i, a_j, b_k)$. The eigenvalues are understood to be ordered in decreasing order. 
 Squaring gives the equations,
 $$\frac{a_i^2 - a_j^2}{a_i^2 - a_k^2} = \frac{b_i^2 - a_j^2}{b_i^2 - b_k^2},\;\;  \frac{a_j^2 - a_k^2}{a_i^2 - a_k^2}=  \frac{a_j^2 - b_k^2}{b_i^2 - b_k^2}. $$
 This pair of equations must hold for $a_j$ when $a_i > a_j > a_k$ range over the other eigenvalues $b_i > b_j < b_k$ of $A$. Note
 that we may choose  one of $b_i, b_k$ so that it equals the corresponding $a_i$ resp. $a_k$. 
  
 There is no loss of generality in assuming that $a_j  =1$ since, we may dilate the ellipse without changing the problem. Moreover,
 we simply notation by writing $\alpha_j = a_j^2, \beta_j = b_j^2$. 
  Thus, we get 
  $$\frac{\alpha_i -1}{\alpha_i - \alpha_k} = \frac{\beta_i - 1}{\beta_i - \beta_k},\;\;  \frac{1 - \alpha_k}{\alpha_i - \alpha_k}=  \frac{1 - \beta_k}{\beta_i - \beta_k}. $$
  Taking a ratio of the two left sides and equating it to the ratio of the two right sides gives,
   $$\frac{\alpha_i -1}{1- \alpha_k} = \frac{\beta_i - 1}{1- \beta_k}. $$
 If we  choose $\beta_i, \beta_k$ so that $\beta_i = \alpha_i$, then  $\beta_k = \alpha_k$. Vice-versa if we choose the axes so that
 $\beta_k = \alpha_k$ then $\beta_i = \alpha_i$. It follows that the system of equations forces all eigenvalues $< \alpha_j$ to be equal
 and all eigenvalues $> \alpha_j$ to be equal, contradicting that all eigenvalues are distinct. 
 
\end{proof}

\subsection{\label{MULTASECT} Second Proof of Theorem \ref{THEODISTINCT}}
Combining Lemma \ref{SINGLEVEL} with Moser's spectral interpretation of the Hamiltonian seems to give powerful conclusions, including a proof of Theorem \ref{THEODISTINCT}.
 
Suppose $A$ has distinct eigenvalues, and $x_0\in\mathcal{E}_A$ is a self-focal point. From Lemma \ref{SINGLEVEL}, together with the spectral definition of the moment map in \eqref{pcaldef}, it follows that the $n-2$ nontrivial eigenvalues of $L(x_0,\xi)$ and their multiplicities are preserved as $\xi\in S^*_{x_0}\mathcal{E}_A$ varies.
Now fix some $\xi_0$ and choose an orthonormal eigenbasis $(\phi_1,\ldots,\phi_{n-2},\xi,\mathbf{n})$ for $L(x_0,\xi_0)$. We denote the $n-2$ nontrivial eigenvalues by $\lambda_j$ as throughout this section.

If we analytically continue the corresponding eigenvalue and eigenvectors along a curve of $\xi$ on $S^*_{x_0}\mathcal{E}_A$ through $x_0$, we must then have $\dot{\lambda}_j=0$ for every $j$ along this curve.

From Lemma \ref{FIRSTVAR}, it follows that for each $j$ and each $\dot\xi\in T_{\xi_0}T^*_{x_0} \mathcal{E}_A$, we have
\begin{equation}
\langle \phi_j,\dot\xi\rangle \langle (A-x_0\otimes x_0)\phi_j,\xi_0\rangle=0.
\end{equation}
As every $\phi_j$ is orthogonal to both $\mathbf{n}$ and $\xi$, and hence in $T_{\xi_0}T^*_{x_0} \mathcal{E}_A$, we can put $\dot\xi=\phi_j$ to obtain 
\begin{equation}\label{VARVANISH}
\langle (A-x_0\otimes x_0)\phi_j,\xi_0\rangle=0.
\end{equation}
Hence 
\begin{equation}
(A-x_0\otimes x_0)\phi_j=P_{\xi_0}(A-x_0\otimes x_0)P_{\xi_0}\phi_j=L\phi_j=\lambda_j\phi_j
\end{equation}
for $j=1,\ldots, n-2$, that is, the $\phi_j$ are also eigenvectors of $A-x_0\otimes x_0$ with the same eigenvalue.
Furthermore, we have
\begin{eqnarray}
& &(A-x_0\otimes x_0)\xi_0 \\
&=& \sum_{j=1}^{n-2}\langle (A-x_0\otimes x_0)\xi_0,\phi_j\rangle \phi_j+ \langle(A-x_0\otimes x_0)\xi_0,\xi_0\rangle \xi_0+\langle(A-x_0\otimes x_0)\xi_0,\mathbf{n}\rangle \mathbf{n}\\
&=& \sum_{j=1}^{n-2}\langle \xi_0,(A-x_0\otimes x_0)\phi_j\rangle \phi_j+ \langle(A-x_0\otimes x_0)\xi_0,\xi_0\rangle \xi_0+\langle \xi_0,(A-x_0\otimes x_0)\mathbf{n}\rangle \mathbf{n}\\
&=& \sum_{j=1}^{n-2}\lambda_j \langle \xi_0,\phi_j\rangle \phi_j+ \langle(A-x_0\otimes x_0)\xi_0,\xi_0\rangle \xi_0\\
&=& \langle(A-x_0\otimes x_0)\xi_0,\xi_0\rangle \xi_0
\end{eqnarray}
and so $\xi_0$ is an eigenvector of the fixed operator $A-x_0\otimes x_0$ for any $\xi_0\in S_{x_0}^* \mathcal{E}_A$. This is only possible if $A-x_0\otimes x_0$ restricts to a scalar operator on the subspace $T_{x_0}^* \mathcal{E}_A$, and so $A-x_0\otimes x_0$ must have an eigenspace of dimension $n-1$. Taking the intersection of this eigenspace with $x_0^\perp$, we obtain an eigenspace for $A$ of dimension at least $n-2$. In particular, this is impossible for $n\geq 4$ if $A$ has distinct eigenvalues.

\section{\label{MYMULTSECT} Reduction of Theorem \ref{THEO4DISTINCT} to Theorem \ref{THEODISTINCT}}

In this section, we begin the reduction of Theorem \ref{THEO4DISTINCT}, to the multiaxial case, Theorem \ref{THEODISTINCT}.

Consider a symmetric  matrix $A$ on $\R^{n}$  with multiple eigenvalues $\alpha_1 \leq \alpha_2 \leq \cdots \leq \alpha_{n}$. We denote
the distinct eigenvalues by $\alpha_j^*$, the number of distinct eigenvalues by $n_*$ or $r$, and the multiplicity of $\alpha_j^*$ by $m(\alpha_j^*)$. Thus,
$\sum_{j=1}^r m_j = n$.  We consider the 
quadratic form $q_A(x) = \langle A^{-1} x, x \rangle $ on $\R^n$. 

\begin{defn} The orthogonal group $O(q_A)$ of $q_A$ is $G = SO(m(\alpha_1^*)) \times \cdots \times SO(m(\alpha_n^*))$, i.e. $T_g A T_g^* = A$. \end{defn}  Obviously,  $G$ acts on the
ellipsoid
$\langle A^{-1} x, x \rangle = 1$.
\begin{lem}
The group $G = SO(m_1) \times \cdots \times SO(m_r)$ acts on $\ecal_A$ by isometries and  $G$ commutes
with $A$. \end{lem}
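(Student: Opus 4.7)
\medskip

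\noindent\textbf{Proof plan.} The plan is to unpack the three assertions in the lemma (commuting with $A$, preserving $\ecal_A$, acting by isometries) and verify each from the eigenspace decomposition of $A$. Since $A$ is symmetric, write $\R^{n}=V_{1}\oplus\cdots\oplus V_{r}$ as an orthogonal direct sum of eigenspaces $V_{j}=\ker(A-\alpha_{j}^{*}I)$ with $\dim V_{j}=m_{j}$. Define the action of $G=SO(m_{1})\times\cdots\times SO(m_{r})$ on $\R^{n}$ block-diagonally: $(g_{1},\ldots,g_{r})$ acts on $v=v_{1}+\cdots+v_{r}$ (with $v_{j}\in V_{j}$) by $g_{j}$ on each summand. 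I will fix an orthonormal basis adapted to this decomposition so that every $g\in G$ is represented by an orthogonal block-diagonal matrix.

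First I will check that every $g\in G$ commutes with $A$. Since $A$ is the scalar $\alpha_{j}^{*}$ on $V_{j}$ and $g$ preserves $V_{j}$, one has $Agv_{j}=\alpha_{j}^{*}gv_{j}=gAv_{j}$ for each $j$, so $Ag=gA$ on all of $\R^{n}$. Consequently $A^{-1}$ also commutes with $g$. Next I will check that $G$ preserves the quadratic form $q_{A}(x)=\langle A^{-1}x,x\rangle$: since $g\in O(n)$ and $gA^{-1}=A^{-1}g$,
\[
q_{A}(gx)=\langle A^{-1}gx,gx\rangle=\langle gA^{-1}x,gx\rangle=\langle A^{-1}x,x\rangle=q_{A}(x),
\]
so $g$ carries $\ecal_{A}=\{q_{A}=1\}$ to itself.

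Finally I will check that $G$ acts on $\ecal_{A}$ by isometries for the induced Riemannian metric. The metric on $\ecal_{A}$ is the restriction of the standard Euclidean metric on $\R^{n}$; since $G\subset O(n)$, each $g$ is a Euclidean isometry of $\R^{n}$, and its restriction to the $G$-invariant submanifold $\ecal_{A}$ is therefore a Riemannian isometry. This completes the three assertions.

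\emph{Obstacle.} There is no real obstacle here; the statement is essentially a bookkeeping consequence of the spectral theorem for $A$ and the definition of $G$. The only mild care needed is in choosing coordinates so that the factors $SO(m_{j})$ act on the correct orthogonal eigenspaces and commute literally as matrices with $A$, which is ensured by diagonalising $A$.
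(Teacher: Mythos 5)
Your proof is correct, but it is organized differently from the paper's. The paper proves the isometry statement first by a local/slice argument: each factor $SO(m_j)$ rotates only the $j$th block of coordinates, and fixing the remaining coordinates produces a slice of $\ecal_A$ isometric to a round sphere, on which these rotations visibly act by isometries; the commutation with $A$ is then read off from the defining property $T_g A T_g^* = A$ of the group $O(q_A)$ introduced just before the lemma. You instead argue globally: from the spectral decomposition $\R^n = V_1 \oplus \cdots \oplus V_r$ you check $gA = Ag$ directly (since $A$ is scalar on each $G$-invariant block), deduce that $g$ preserves $q_A$ and hence $\ecal_A$, and conclude that $g$, being an element of $O(n)$ preserving the submanifold, restricts to a Riemannian isometry of the induced metric. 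Your route is arguably the more complete one for the isometry claim, since being an isometry of each sphere slice does not by itself certify being an isometry of the whole hypersurface, whereas restricting an ambient Euclidean isometry does; the paper's version, on the other hand, makes the geometric picture of the spherical slices explicit, which is used again later when the symmetry group is exploited to move self-focal points onto coordinate slices. Both arguments rest on the same block-diagonal description of $G$, so the difference is one of presentation and rigor of the isometry step rather than of substance.
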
 \begin{proof} To prove the first statement it suffices to consider a factor $SO(m_j)$. It acts only in the $j$th block
of indices by rotations of the corresponding coordinates.  If we fix the other coordinates, we obtain a slice of the ellipsoid which is isometric
to a sphere. The rotations in the block are isometries of the slice. 
The second follows because from the fact that $g \in O(q_A)$.   
\end{proof}

\subsection{Lax matrix and isospectral deformation} We view $L(x, \xi)$ for $(x, \xi) \in S^*\ecal_A$  as an $n \times n$ matrix.  Hence it makes sense to consider the conjugates $g L(x, \xi) g^{-1}$
with $g \in G$. 
We lift the $G$ action to $T^*\ecal_A$ in the canonical way and denote the lifted action by $\tau(g) (x, \xi) = (g x, (D_x g)^{-1 tr} \xi). $

\begin{prop}\label{gL} $L(\tau(g)(x, \xi)) = g L(x, \xi) g^{-1}$, hence the Lax matrices along   a $G$-orbit  $G \cdot (x, \xi)$ in $S^* \ecal_A$ form an isospectral class. \end{prop}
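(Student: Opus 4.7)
The plan is to unwind the three ingredients of $L(x,\xi) = P_\xi(A - x\otimes x^*)P_\xi$ one at a time and see how each transforms under $\tau(g)$, then assemble.

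First I would record that $G = SO(m_1)\times\cdots\times SO(m_r)$ sits inside $O(n)$, so every $g\in G$ satisfies $g^{-1}=g^{tr}$. Consequently the canonical cotangent lift simplifies: since $D_x g = g$ and $g$ is orthogonal, $\tau(g)(x,\xi) = (gx,\,g\xi)$. This turns the problem from a statement about a cotangent action into a statement about simultaneous rotations of $x$ and $\xi$ in $\R^n$.

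Next I would verify three elementary conjugation identities. (i) Since $g\in O(q_A)$ means $gAg^{tr}=A$ and $g$ is orthogonal, we get $gAg^{-1}=A$. (ii) For the rank-one term, $(gx)\otimes(gx)^* = gx\,x^*g^{tr} = g(x\otimes x^*)g^{-1}$. (iii) For the projection,
\begin{equation*}
P_{g\xi} = I - (g\xi)\otimes(g\xi)^* = g(I-\xi\otimes\xi^*)g^{-1} = gP_\xi g^{-1}.
\end{equation*}
Substituting these three identities into the definition of $L$ at the point $\tau(g)(x,\xi)=(gx,g\xi)$ yields
\begin{equation*}
L(gx,g\xi) = gP_\xi g^{-1}\bigl(gAg^{-1} - g(x\otimes x^*)g^{-1}\bigr)gP_\xi g^{-1} = gP_\xi(A-x\otimes x^*)P_\xi g^{-1} = gL(x,\xi)g^{-1},
\end{equation*}
after the $g^{-1}g$ pairs in the middle cancel.

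The isospectral consequence then follows immediately from the fact that conjugate matrices share the same eigenvalues with multiplicities, so the entire $G$-orbit through $(x,\xi)$ maps into a single isospectral class $\mcal(\vec\lambda)$. I do not anticipate a genuine obstacle here; the only thing one must be careful about is to use the orthogonality of $g$ (which is what lets one replace the canonical lift $(D_x g)^{-tr}\xi$ by $g\xi$ and replace $g^{tr}$ by $g^{-1}$), and to check that the $O(q_A)$ condition $gAg^{tr}=A$ really implies $[g,A]=0$ — both of which are automatic from $G\subset O(n)$.
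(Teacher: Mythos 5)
Your proof is correct and follows essentially the same route as the paper: conjugate each factor of $L(x,\xi)=P_\xi(A-x\otimes x^*)P_\xi$ separately, using $gAg^{-1}=A$, $(gx)\otimes(gx)^*=g(x\otimes x^*)g^{-1}$, and $gP_\xi g^{-1}=P_{\tau(g)(x,\xi)}$, then invoke invariance of the spectrum under conjugation. The only cosmetic difference is that you make explicit at the outset that orthogonality of $g$ reduces the canonical lift $(D_xg)^{-tr}\xi$ to $g\xi$, a point the paper handles implicitly via the identification of vectors and covectors by the metric.
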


\begin{proof} By definition, $L(x, \xi) = P_{\xi} (A + x \otimes x^* ) P_{\xi}$ and $$\begin{array}{lll} g L(x, \xi) g^{-1} & = &  g P_{\xi} g^{-1} (g A g^{-1} + g x \otimes (gx)^*) 
g P_{\xi} g^{-1} \\ && \\
& = & g P_{\xi} g^{-1} ( A + g x \otimes (gx)^*) 
g P_{\xi} g^{-1} \\ && \\
& = &  P_{\tau(g) (x, \xi)}  ( A + g x \otimes (gx)^*) 
P_{\tau(g)(x, \xi)}\\ && \\
& = &  L(\tau(g)(x, \xi)) .  \end{array}$$
The second to  last identity follows from the fact that  $g P_{\xi} g^{-1} = P_{\tau(g)(x,  \xi)}$, where we identify vectors and co-vectors using the Riemannian metric.
Since $g$ acts by isometries,  $g P_{\xi} g^{-1}$ is the orthogonal projection onto the orthogonal complement of $g \xi = D_x T_g \xi$. 

\end{proof}

\subsection{Isotropy groups} Let $x \in \ecal_A$ and let $G_x =\{g \in G: g x =x \}$ be its isotropy group. For $g \in G_x$, 
$D_x T_g: T_x \ecal_A \to T_x \ecal$ is an isometry. 

\begin{lem} \label{ISOTROPLEM} 
If $g \in G_x$ then $T_g (A + x \otimes x) T_g^{-1} = (A + x \otimes x)$. 
\end{lem}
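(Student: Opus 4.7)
The plan is to verify the two pieces separately and combine them. First, I will show that $g\in G=O(q_A)$ implies $T_g A T_g^{-1}=A$. The defining relation of $O(q_A)$ is $T_g A T_g^{\ast}=A$, and because each factor $SO(m_j)$ acts by orthogonal transformations on the corresponding block of coordinates, $T_g$ is orthogonal, so $T_g^{\ast}=T_g^{-1}$; thus $T_g A T_g^{-1}=A$. This is exactly the statement that $G$ commutes with $A$ that was already asserted in the paragraph defining $G$.

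Next, I will handle the rank-one piece $x\otimes x$. Interpreting $x\otimes x$ as the linear map $v\mapsto \langle x,v\rangle x$, orthogonality of $T_g$ gives, for any $v\in\RR^n$,
\begin{equation*}
T_g(x\otimes x)T_g^{-1} v \;=\; T_g\bigl(\langle x,T_g^{-1}v\rangle x\bigr) \;=\; \langle T_g x, v\rangle\, T_g x,
\end{equation*}
so $T_g(x\otimes x)T_g^{-1}=(T_g x)\otimes (T_g x)$. Since $g\in G_x$ means $T_g x=x$, this simplifies to $x\otimes x$.

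Adding the two identities and using linearity of the conjugation $M\mapsto T_g M T_g^{-1}$ then yields
\begin{equation*}
T_g(A+x\otimes x)T_g^{-1} \;=\; T_g A T_g^{-1} + T_g(x\otimes x)T_g^{-1} \;=\; A + x\otimes x,
\end{equation*}
as claimed. There is no real obstacle here; the content of the lemma is simply to record that both summands of the Lax-type matrix $A+x\otimes x$ are individually preserved by conjugation by an element of the stabilizer $G_x$, which will be used later (together with Proposition \ref{gL}) to produce isospectral families of Lax matrices parametrized by the $G_x$-orbit in the fibre $S^{\ast}_x\ecal_A$.
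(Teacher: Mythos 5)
Your proof is correct and is essentially the argument the paper has in mind: the paper omits a formal proof and instead records the identity $T_g (A + x \otimes x^*) T_g^{-1} = A + (T_g x) \otimes (T_g x)^*$ right after the statement, which together with $T_g A T_g^{-1} = A$ (from $G = O(q_A)$ acting orthogonally and commuting with $A$) and $T_g x = x$ for $g \in G_x$ is exactly your two-step computation. Nothing further is needed.
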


Even if $T_g A T_g^{-1} = A$, we need to understand $T_g (A + x \otimes x^*) T_g^{-1} = A + (T_g x) \otimes (T_g x)^*$. Further
we need to understand $(T_g x, (DT_g)^{* -1} \xi)$.

Let $G_{x, \xi} = \{g \in G: D_x T_g \xi = \xi\}$ be the isotropy group of this
linear transformation.

\begin{lem} \label{ISOTROPLEM2} 
 If $\dim G_x $ is even, then $G_{x, \xi} = \{I\}$ for all $x \in T_x \ecal_A$ while if $\dim G_x$ is odd, then
each $D_x T_g$ has a `axis of rotation' and $G_{x, \xi}$ is the rotations of which $\xi$ is the axis. In this case, $T_g L(x, \xi) T_g^{-1} = L(x, \xi)$.
\end{lem}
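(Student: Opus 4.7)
The plan is to exploit the block structure of $G_x$ coming from the eigenspace decomposition of $A$, and then apply the classical dichotomy between rotations of even- and odd-dimensional real Euclidean spaces.

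First I would decompose $\R^n = \bigoplus_j E_j$ into the eigenspaces of $A$, with $\dim E_j = m_j$. Because every $g \in G$ preserves each $E_j$, the stabilizer factorizes as $G_x = \prod_j H_j$, where $H_j = SO(E_j)$ if the component $x^{(j)}$ of $x$ in $E_j$ vanishes, and $H_j \cong SO(E_j \cap (x^{(j)})^\perp)$ otherwise. Thus each $H_j$ acts as the rotation group of a precise subspace $V_j \subseteq E_j$ of dimension $d_j$, and the tangent space $T_x\ecal_A$, cut out from $\R^n$ by the single linear condition $\langle A^{-1}x,\cdot\rangle = 0$, inherits a $G_x$-invariant splitting whose only nontrivial rotation summands are the $V_j$'s.

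Second I would analyze $G_{x,\xi}$ block by block. Writing $\xi = \sum_j \xi^{(j)}$, the isotropy factors as $G_{x,\xi} = \prod_j (H_j)_{\xi^{(j)}}$. At this point the parity dichotomy enters: if $d_j$ is even, a generic element of $SO(V_j)$ has no real eigenvector, so no nonzero $\xi^{(j)}$ is fixed by any nontrivial rotation of $V_j$; if $d_j$ is odd, every element of $SO(V_j)$ has a one-dimensional fixed axis by the structure theorem for real orthogonal matrices, and the subgroup fixing a nonzero $\xi^{(j)}$ aligned with such an axis is exactly the circle of rotations about $\xi^{(j)}$. Assembling these factor-by-factor yields the two alternatives in the statement, with the understanding that the parity of ``$\dim G_x$'' in the lemma corresponds to the parity of the rotation dimension $d_j$.

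Finally, the last assertion $T_g L(x,\xi) T_g^{-1} = L(x,\xi)$ for $g \in G_{x,\xi}$ is immediate from Proposition \ref{gL}: the hypothesis $\tau(g)(x,\xi) = (x,\xi)$ gives $L(x,\xi) = L(\tau(g)(x,\xi)) = g L(x,\xi) g^{-1}$. The main obstacle is the bookkeeping in the first step: identifying the correct $G_x$-invariant decomposition of $T_x \ecal_A$ and reconciling the parity of $d_j$ with the parity statement in the lemma, especially at points $x$ whose components vanish in several eigenblocks of $A$; once the rotation subspace $V_j$ in each block is correctly pinned down, the axis-of-rotation argument runs independently in each factor and the invariance of $L$ follows from Proposition \ref{gL} with no further work.
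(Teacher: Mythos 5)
The paper states Lemma \ref{ISOTROPLEM2} without any proof, so there is no argument of the authors' to compare yours against; I can only assess your proposal on its own terms. Your skeleton is the natural one and parts of it are fine: the blockwise decomposition $\R^n=\bigoplus_j E_j$ into eigenspaces of $A$, the factorization $G_x=\prod_j H_j$ with $H_j=SO(E_j)$ when $x^{(j)}=0$ and $H_j\cong SO(E_j\cap (x^{(j)})^{\perp})$ otherwise, the factorization $G_{x,\xi}=\prod_j (H_j)_{\xi^{(j)}}$, and the final deduction of $T_g L(x,\xi)T_g^{-1}=L(x,\xi)$ from Proposition \ref{gL} once one knows $\tau(g)(x,\xi)=(x,\xi)$ (using that $g$ is an isometry to identify $(D_xg)^{-1\,tr}\xi$ with $D_xg\,\xi$) are all correct.

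The genuine gap is in the dichotomy step, which is where the content of the lemma lies. From ``a generic element of $SO(V_j)$ has no real eigenvector'' (true for $d_j$ even) you conclude that ``no nonzero $\xi^{(j)}$ is fixed by any nontrivial rotation of $V_j$''; this is a non sequitur and is false whenever $d_j\ge 4$: the stabilizer of a nonzero vector in $SO(d_j)$ is a copy of $SO(d_j-1)$, which is nontrivial for every $d_j\ge 3$ regardless of parity. Symmetrically, in the odd case the stabilizer of a nonzero $\xi^{(j)}$ is $SO(d_j-1)$, which is ``the circle of rotations about $\xi^{(j)}$'' only when $d_j=3$. You also omit the perfectly possible case $\xi^{(j)}=0$ in some rotated block, where $(H_j)_{\xi^{(j)}}=H_j$ in its entirety, so $G_{x,\xi}\ne\{I\}$ even when that block is even-dimensional. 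Finally, you flag but never resolve the mismatch between the lemma's hypothesis ``$\dim G_x$ even/odd'' and the parity of the rotated dimensions $d_j$ (note $\dim SO(2)=1$ and $\dim SO(3)=3$ are both odd while the rotated spaces have opposite parities, so the two parities genuinely cannot be identified, and different blocks may have different parities). Part of the difficulty is that the lemma itself is loosely stated, but as written your block-by-block conclusions do not follow from your premises and are false in general; a correct argument has to restrict to the low-dimensional rotation blocks (and to $\xi$ with nonzero component in each rotated block) for which the ``axis of rotation'' description is accurate, or else reformulate the dichotomy in terms of the dimension of the subspace on which $D_xT_g$ acts.
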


\section{\label{4AXES} Ellipsoids with at least four distinct axis lengths:   Proof of Theorem \ref{THEO4DISTINCT} }

In Section \ref{MULTASECT}, we proved that the ellipsoid $\SE_A$ has no self-focal points if $A$ has distinct eigenvalues, and $n\geq 4$. In this section, we use this result to show that there cannot exist self-focal points in  the case of degenerate ellipsoids with at least four distinct axis lengths.

\begin{lem}
	\label{lem:curveoffoci}
	If $G$ is a Lie group acting isometrically on a Riemannian manifold $M$ and $p\in M$ is a self-focal point, then the orbit $Gp\subset M$ consists entirely of self-focal points.
\end{lem}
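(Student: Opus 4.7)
The plan is to exploit the fact that isometries commute with the geodesic flow, so the self-focal property is manifestly equivariant under the action of any group of isometries. Concretely, for a diffeomorphism $g \in G$ acting by isometries, the derivative $g_*$ maps $S_pM$ bijectively onto $S_{gp}M$, and the geodesic with initial data $(gp, g_*\eta)$ is precisely $g \circ \gamma_{p,\eta}$, where $\gamma_{p,\eta}$ is the geodesic with initial data $(p,\eta)$.

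First I would fix $g \in G$ and show that $gp$ is self-focal with the same return time $T > 0$ as $p$. Given any $\xi \in S_{gp}M$, set $\eta = g_*^{-1}\xi \in S_pM$. By the self-focal hypothesis at $p$, the geodesic $\gamma_{p,\eta}$ satisfies $\gamma_{p,\eta}(T) = p$. Applying $g$ and using the fact that $g$ carries geodesics to geodesics, we conclude
\begin{equation*}
\gamma_{gp,\xi}(T) = g\bigl(\gamma_{p,\eta}(T)\bigr) = g(p) = gp.
\end{equation*}
Since $\xi \in S_{gp}M$ was arbitrary, $gp$ is a self-focal point with return time $T$. As $g$ was an arbitrary element of $G$, the whole orbit $Gp$ consists of self-focal points.

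There is no real obstacle here: the argument rests only on the definitional properties of isometries (preserving the Levi-Civita connection and hence the geodesic equation) and the fact that $g_*$ restricts to a bijection $S_pM \to S_{gp}M$ by virtue of $\|g_*\eta\|_g = \|\eta\|_g$. Note in particular that the same return time $T$ works along the entire orbit, which is the quantitative form of the conclusion that will be used in the subsequent reduction of Theorem \ref{THEO4DISTINCT} to Theorem \ref{THEODISTINCT}.
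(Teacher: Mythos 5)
Your proof is correct and follows essentially the same argument as the paper: isometries commute with the exponential map (equivalently, the geodesic flow commutes with the lifted group action), so $g\cdot\exp_p(T\eta)=\exp_{gp}(Tg_*\eta)$ yields self-focality of $gp$ with the same return time $T$. Nothing to add.
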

\begin{proof}
	The geodesic flow commutes with the Lie group action lifted to $T^*M$, and so for $g\in G$ and self-focal $x\in M$, we have $g\cdot x=g\cdot \exp_{x}(T\xi)=\exp_{g\cdot x}(Tg_*(\xi))$ for arbitrary $\xi$, and so $g\cdot x$ is self-focal.
\end{proof}
We are now ready to 
prove Theorem \ref{THEO4DISTINCT}.
%\begin{theo}
%	\label{theo:4DISTINCT}
%	If $A$ has at least $4$ distinct eigenvalues, then $\SE_A$ does not have any self-focal points.
%\end{theo}

\begin{proof}
	Suppose $A=\diag(\alpha_j)_{j=1}^n$ is a diagonal $n\times n$ matrix with $l\leq n$ distinct eigenvalues. Reordering coordinates if necessary, we can assume that $\alpha_1,\ldots,\alpha_l$ are distinct.
	Now suppose that $x\in \SE_A\subset \RR^n$ is a self-focal point with first return time $T>0$.
	Taking $V_j=\ker(A-\alpha_j I)$, we choose orthogonal matrices $B_1,\ldots B_l$ so that $B_j(x)=\|\pi_{V_j}(x)\|e_j+\pi_{V_j^\perp}(x)$. Application of Lemma \ref{lem:curveoffoci} using the isometry $B=\prod_{j=1}^l B_j\in O(n)$ then implies that there is a self-focal point $y=Bx=(y_j)_{j=1}^n$, with $y_j=0$ for $j>l$.
	
	This self-focal point $y$ lies on the sub-ellipsoid $\tilde{\SE}_A=\{x\in \SE_A: x_j=0\textrm{ for }j>l\}$. The sub-ellipsoid $\tilde{\SE}_A$ is an isometrically embedded copy of the $l$-axial ellipsoid $\SE_{\tilde{A}}$ determined by the matrix $\tilde{A}=\diag(\alpha_1,\ldots,\alpha_l)$. We denote the 
	isometric embedding by $\iota:\SE_{\tilde{A}}\rightarrow\tilde{\SE}_A$.
	
	The geodesics $\exp_y(t\xi)$ on $\SE_A$ with $\xi\in S_y^*\SE_A$ and $\xi_j=0$ for $j>l$ then correspond bijectively to geodesics on $\SE_{\tilde{A}}$ through $\iota^{-1}(y)$, and this correspondence is a bijection. As the $\exp_y(t\xi)$ have common first return time $T$, the isometric embedding $\iota:\SE_{\tilde{A}}\rightarrow\tilde{\SE}_A$ yields the existence of a self-focal point $\iota^{-1}(y)\in \SE_{\tilde{A}}$. From Theorem \ref{THEODISTINCT}, this is only possible if $l<4$.
\end{proof}

\section{\label{3AXESSECT} Ellipsoids with exactly three distinct axes} 
In this section we prove Proposition \ref{3AXESPROP}, and in addition we prove further results in the triaxial case. 
We denote the multiplicities of the axes by $(m_1, m_2, m_3)$ (with  $m_1 + m_2 + m_3 = n$), so that $$\ecal_A = \{(\vec x_1, \vec x_2, \vec x_3) \in \R^{m_1} \oplus \R^{m_2} \oplus \R^{m_3} :\;  \frac{||\vec x_1||^2}{a_1^2} +  \frac{||\vec x_2||^2}{a_2^2} +  \frac{||\vec x_3||^2}{a_3^2} =1\} .$$
Associated to the ellipsoid is a triaxial  two-dimensional ellipsoid $$\ecal_{\vec a} = \{(x_1, x_2, x_3) \in  \R^3: \sum_{j=1}^3 \frac{x_j^2}{a_j^2} = 1\}. $$

 For any choice of indices $i_{j, k}$ with $j= 1, 2, 3$ and $k = 1, \dots, j$, let $e_{j,k}$ be the standard basis of $\R^{m_j}$, and define the totally geodesic   coordinate
 embeddings,
 $$\iota_{k_1, k_2, k_3}: \ecal_{a_1, a_2, a_3} \to \ecal_A, \;\; \iota_{k_1, k_2, k_3}(x_1, x_2, x_3) = x_1 e_{1, k_1} + x_2 e_{2, k_2}
 + x_3 e_{3, k_3}. $$
 There are $m_1 m_2 m_3$ such coordinate  embeddings.
  
An umbilic point of $\ecal_{a_1, a_2, a_3} \subset \R^3$ has the form, $(x_1, 0, x_3)$ with \begin{equation} \label{UMBEQ} x_1 = \pm a_3 \sqrt{\frac{a_2^2 - a_3^2}{a_1^2 - a_3^2}},  
x_3 = \pm a_1 \sqrt{\frac{a_2^2 - a_1^2}{a_3^2 - a_1^2}}. \end{equation}

 Let $G= SO(m_1) \times SO(m_2) \times SO(m_3) \times \Z_2^n$ denote  the isometry group of $\ecal_A$. Here $\Z_2^n$ is the group
 of sign changes $x_j \to - x_j$. Then $G$ acts on the coordinate embeddings of $\ecal_{\vec a}$, producing more totally geodesic
 two-dimensional ellipsoids. 
 To consider the `set' of totally geodesic embedded two-dimensional ellipsoids, we  consider the isotropy groups of the umbilic points of the coordinate embedded
 $\iota_{\vec k}(\ecal_{\vec a})$. 
With no loss of generality, let us assume $k_1 = 1, k_3 = 1$, so that the embedded umbilic point has the coordinates
 $(x_1, 0, 0, \cdots, 0; \vec 0; x_3, 0, 0 \cdots 0). $
 When $m_1, m_3$ are odd, the  isotropy group of this point is $G_{umbilic} = SO(m_1 -1) \times SO(m_2) \times SO(m_3-1)$, where the subgroups rotate around
 the $e_1$ axes of $\R^{m_1}$ resp. $\R^{m_2}$. When a multiplicity is even, the isotropy group of that factor is   of one lower dimension.
 The orbit of the umbilic point under the symmetry group has the form, 
 $$S^{m_1 -1} \times \{0\} \times S^{m_3 -1} $$
 when both multiplicities are odd.

 %\edit{
 %We also also consider the simplex $\Delta_{\vec a} \subset \R_+^3$ defined by 
% $$\Delta_{\vec a} = \{(u_1, u_2, u_3) \in \R_+^3:  \frac{ u_1}{a_1^2}  + \frac{ u_2}{a_2^2}  + \frac{ u_3}{a_3^2} = 1\}. $$
% Given $(u_1, u_2, u_3) \in \Delta_{\vec a}$ let $S^{m_1 -1}_{u_1} \times S^{m_2 -1}_{u_2} \times S^{m_3 -1}_{u_3}$ the denote the product
% of the spheres of radius $u_j$ in $\R^{m_j}$. Clearly,
 %$$\ecal_A = \bigcup_{(u_1, u_2, u_3) \in \Delta_{\vec a}} S^{m_1 -1}_{u_1} \times S^{m_2 -1}_{u_2} + S^{m_3 -1}_{u_3},$$
 %and we have a map
 %$$\pi: \ecal_A \to  \Delta_{\vec a}, \;\; \pi(\vec x_1,\vec x_2, \vec x_3) = (||\vec x_1||^2, ||\vec x_2||^2, ||\vec  x_3||^2). $$}

 The following is obvious:
 \begin{lem} Suppose that $x_0 \in \ecal_A$ is a self-focal point that lies on a totally geodesic embedded ellipsoid. Then $x_0$ is self-focal
 on the sub-ellipsoid. In particular, if the subellipsoid is two dimensional, $x_0$ must be an umbilic point of the subellipsoid.\end{lem}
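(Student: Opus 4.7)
The plan is to exploit the defining property of a totally geodesic submanifold: a curve in the submanifold is a geodesic of the submanifold if and only if it is a geodesic of the ambient ellipsoid. This identification of geodesics is the only real ingredient needed for the first assertion.

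Concretely, let $N \subset \ecal_A$ be a totally geodesic embedded sub-ellipsoid containing $x_0$, and let $T > 0$ be the first return time at $x_0$ regarded as a self-focal point of $\ecal_A$. For any $\eta \in S^*_{x_0} N$, I would view $\eta$ via the inclusion $T_{x_0} N \hookrightarrow T_{x_0} \ecal_A$ as a unit covector $\xi \in S^*_{x_0} \ecal_A$. Because $N$ is totally geodesic, the geodesic $\gamma^N_\eta(t) = \exp^N_{x_0}(t\eta)$ of $N$ coincides with the geodesic $\gamma_\xi(t) = \exp_{x_0}(t\xi)$ of $\ecal_A$ for all $t$. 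The self-focal hypothesis on $\ecal_A$ gives $\gamma_\xi(T) = x_0$, hence $\gamma^N_\eta(T) = x_0$. Since $\eta \in S^*_{x_0} N$ was arbitrary, $x_0$ is a self-focal point of $N$ (with first return time dividing $T$).

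For the second assertion, I would invoke the classical theorem of Jacobi (recalled in the introduction of the paper and in \cite{J,Kl95}): on a two-dimensional tri-axial ellipsoid the only self-focal points are the four umbilic points, and on the degenerate two-axial case the claim is vacuous for our purposes (it does not arise as a genuine sub-ellipsoid once one has already descended to dimension two with distinct axes). Thus when $N$ is a two-dimensional sub-ellipsoid, the point $x_0$ must be one of its umbilic points.

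There is no real obstacle here: the first part is a direct consequence of the definition of self-focality together with the characterization of geodesics on a totally geodesic submanifold, and the second part is a citation of Jacobi's classical theorem. The only mild care needed is to confirm that the embedding of the lower-dimensional ellipsoid into $\ecal_A$ really is totally geodesic (which holds for the coordinate sub-ellipsoids $\iota_{k_1,k_2,k_3}(\ecal_{\vec a})$ by the reflection symmetry argument: fixed-point sets of isometric involutions are totally geodesic).
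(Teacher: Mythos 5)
Your argument is correct and matches the paper's approach exactly: the paper states this lemma without proof (``The following is obvious''), and the intended content is precisely what you give — total geodesy identifies sub-ellipsoid geodesics with ambient geodesics having tangent initial direction, so every loop at $x_0$ restricts, and the two-dimensional case follows from the classical characterization of self-focal points on a two-dimensional ellipsoid. The only small caveat is that the fact you need is the converse of the Jacobi statement recalled in the introduction (self-focal $\Rightarrow$ umbilic, rather than umbilic $\Rightarrow$ self-focal); this converse is likewise classical and is invoked without proof by the paper itself in the proof of Theorem \ref{THEODISTINCT}, so your citation practice is consistent with the paper's.
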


 As the Lemma suggests, the aim is to constrain the possible self-focal points of $\ecal_A$ by relating them to umbilic points of totally geodesic embedded ellipsoids. The
 set of such ellipsoids depends on the multiplicites $(m_1, m_2, m_3)$ and the associated isometry group.
 
%By Lemma \ref{IMLAMBDA}, if $\ecal_A$ has a self-focal point $x_0$, then for any embedding $\iota_{\vec k}$ of $\ecal_{\vec a}$,
%the image of any of the  umbilic points of $\ecal_{\vec a}$ can be connected to $x_0$ by a geodesic. If this geodesic is tangent to
%$\iota_{\vec k}(\ecal_{\vec a})$, then $x_0$ must be an umbilic point of this totally geodesic ellipsoid. 
  
 \subsection{Tri-axial ellipsoids with extreme multiplicities} 
 In this section, we restrict attention to the extreme cases of $(m_1, m_2, m_3)$ where two multiplicities equal $1$, i.e. with $(m_1, m_2, m_3) =  (n-2, 1,1), (1, n-2, 1), (1, 1, n-2)$.

There are essentially  two cases:

\begin{itemize}

\item (i)\; $m_1 = m_3 = 1, m_2 = n-2.$ I.e. $(m_1, m_2, m_3) =  (1, n-2, 1)$  \bigskip

\item (ii)\; $m_1= n-2, m_2= m_3 = 1$, or $m_1 = m_2 = 1, m_3 = n-2$;  \bigskip

%\item (iii)\;  $m_1 = m_2 = 1, m_3 = n-2$. \bigskip

\end{itemize}

 \begin{prop} In case (i), $\ecal_A$ posseses a non-polar self-focal point $u$. 
 \end{prop}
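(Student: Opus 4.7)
The plan is to exploit the $SO(n-2)$ symmetry of $\ecal_A$, which acts on the middle factor $\R^{n-2}$ and fixes the candidate point $u = (x_1, \vec 0, x_3)$, to reduce the self-focality question to Jacobi's classical theorem on the umbilic points of a two-dimensional triaxial ellipsoid. First, I would record that the tangent space $T_u \ecal_A$ decomposes orthogonally and $SO(n-2)$-equivariantly as $\R\eta \oplus \R^{n-2}$, where $\eta$ is the unit tangent to $\ecal_A$ at $u$ lying in the $(e_1,e_3)$-plane (well-defined up to sign as the tangent to the planar coordinate ellipse $\ecal_A \cap \mathrm{Span}(e_1,e_3)$), and $\R^{n-2}$ is the middle factor. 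Thus every $\xi \in S^*_u \ecal_A$ may be written uniquely as $\xi = \cos\theta\,\eta + \sin\theta\,v$ with $\theta \in [0,\pi]$ and $v \in S^{n-3} \subset \R^{n-2}$.

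The key step is to show that the geodesic $\gamma_\xi(t) = \exp_u(t\xi)$ lies on a totally geodesic $2$-dimensional sub-ellipsoid isometric to $\ecal_{\vec a}$. Let $\Pi_v := \mathrm{Span}(e_1, v, e_3) \subset \R^n$ and consider the linear reflection $\sigma_v$ that is the identity on $\Pi_v$ and $-1$ on $\Pi_v^\perp = v^{\perp} \subset \R^{n-2}$. Since $\sigma_v$ restricts to an orthogonal transformation of the middle factor $\R^{n-2}$, the defining quadratic form $\frac{x_1^2}{a_1^2} + \frac{\|\vec x_2\|^2}{a_2^2} + \frac{x_3^2}{a_3^2}$ is preserved, so $\sigma_v$ is an isometry of $\ecal_A$. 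The fixed-point set of $\sigma_v$ on $\ecal_A$ is exactly $\ecal_A \cap \Pi_v$, hence this submanifold is totally geodesic. Under the obvious isometric identification $\Pi_v \cong \R^3$ by the basis $(e_1, v, e_3)$, the slice $\ecal_A \cap \Pi_v$ is isometric to $\ecal_{\vec a}$, and $u = (x_1,\vec 0, x_3)$ corresponds to $(x_1, 0, x_3)$, which is the umbilic point specified by \eqref{UMBEQ}. Since $\xi \in T_u(\ecal_A \cap \Pi_v)$ whenever $\xi$ has the form $\cos\theta\,\eta + \sin\theta\,v$, the geodesic $\gamma_\xi$ stays in $\ecal_A \cap \Pi_v$.

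From here, Jacobi's theorem (cited in the introduction) yields that the umbilic point of $\ecal_{\vec a}$ is self-focal with first return time $T$ independent of the initial direction. Transporting through the isometry, $\gamma_\xi(T) = u$ for every $\xi$ of the form above; for the two exceptional directions $\xi = \pm\eta$ (where $v$ is undetermined), the geodesic is the closed planar ellipse $\ecal_A \cap \mathrm{Span}(e_1, e_3)$, which is contained in every $\Pi_v$ and likewise returns at time $T$. Hence every geodesic from $u$ returns to $u$ at time $T$, so $u$ is self-focal. To see it is non-polar, invoke the classical fact that the first return map at an umbilic point of $\ecal_{\vec a}$ is not the identity (it is the north-pole--south-pole map on $S^1$ mentioned in the introduction): choosing $\xi = \cos\theta\,\eta + \sin\theta\,v$ whose 2D reduction is not a fixed point of that map yields $\Phi_u(\xi) \neq \xi$, so $\Phi_u \neq Id$.

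The only substantive point is the totally geodesic embedding $\ecal_A \cap \Pi_v \hookrightarrow \ecal_A$; everything else is bookkeeping or a direct quotation of the $2$-dimensional Jacobi theorem. I do not expect any real obstacle: the reflection $\sigma_v$ is manifestly an isometry of $\ecal_A$ because the defining form depends on the middle coordinates only through their squared norm, and the fixed-point set of an isometry is automatically totally geodesic. (Note that the stronger twisting statement of Proposition~\ref{3AXESPROP} would require a further infinitesimal analysis of $\Phi_u$, but only non-polarity is asserted here.)
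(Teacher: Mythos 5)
Your proposal is correct and follows essentially the same route as the paper: both arguments cover every direction in $S^*_u\ecal_A$ by tangent planes of totally geodesic two-dimensional triaxial slices through $u$ (obtained from the $SO(n-2)$ symmetry acting on the middle factor), reduce to Jacobi's theorem that the umbilic point of $\ecal_{\vec a}$ is self-focal, and deduce non-polarity from the non-identity return map in the two-dimensional slice. The only cosmetic difference is that you certify total geodesicity of the slices $\ecal_A\cap\Pi_v$ via the fixed-point set of the reflection $\sigma_v$, whereas the paper takes the coordinate embeddings $\iota_{\vec k}(\ecal_{\vec a})$ as totally geodesic and sweeps them around by the isotropy group $G_u=SO(n-2)$; these are interchangeable justifications of the same step.
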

 
 \begin{rem} Interestingly, it is also an umbilic point of $\ecal_A$ in the sense that the second fundamental form at $u$
 is a multiple of the metric \cite{LSG}.  We are not aware of any apriori identification of self-focal points and umbilic points on ellipsoids.
 \end{rem}
 
 \begin{proof}

We are considering the case where the middle axis has multiplicity $n-2$. The possible coordinate embeddings $\iota_{\vec k}$
 of $\ecal_{\vec a}$ have a fixed $k_1 = e_1, k_n = e_n$ and only the middle axis $k_2$ is variable. These embeddings are the
 orbit of any fixed embedding under the identity component $G_e =\{1\} \times SO(n-2) \times \{1\}$ of the isometry group of $\ecal_A$, which fixes $e_1$ and $ e_n$ and rotates the second axis in the orthogonal complement
 of $\rm{Span}\{e_1, e_n\}$.
 
  Let $\wt u = (x_1, 0, x_2) \in \ecal_{\vec a} $ be 
 an umbilic point, as in \eqref{UMBEQ}. The image of $\wt u$ under the embedding
$\iota_{1, k_2, n}$ is 
$$u = \iota_{1, k_2, n}(x_1, 0, x_3) = x_1 e_{1} 
 + x_3 e_{n} = (x_1, \vec 0, x_3). $$
Thus, the image of $\wt u$ is the same $u$  for every $\iota_{\vec k}$ in this case. The point $u$ is fixed
 under $G_e =  \{1\} \times SO(n-2) \times \{1\}$, the identity  component of the isometry group, its   isotropy group $ G_u:= \{g: gu = u\} = G_e =  \{1\} \times SO(n-2) \times \{1\}$.

 We claim that $u$ is a self-focal point of $\ecal_A$. Certainly, this
 is true for the directions in $S^*_u \ecal_A$ lying in  $T^*_u (\iota_{\vec k}(\ecal_{\vec a}))$ for any $\vec k= (1, \vec k_2, n)$. To prove this, we decompose
 $T_u \ecal_A$ into irreducible subspaces for $G_u$.
 
Let $\vec n_u$ be the unit outward normal to $\ecal_A$ at $u$ in $\R^n$. Then,  $T_u \ecal_A \oplus \R \vec n_u = T_u \R^n$. Also, 
$\R^n = \R e_1  \oplus \R^{n-2} \oplus \R e_n$. The isotropy group $G_u= SO(n-2) $ acts on the middle $\R^{n-2}$. 
%We claim that this
%$\R^{n-2} = \rm{Span}\{e_2, \dots, e_{n-1}\}$ is a subspace of $T_u \ecal_A$. To prove this, it suffices to show that $\vec n_u \in \rm{Span}\{e_1,  e_n\}$.

  Let  $\gamma_u(t)$   be the $\iota $ applied to the hyperbolic geodesic $\wt \gamma(t)$  through $\wt u \in \ecal_{\vec a}$. Then,
  $\wt \gamma(t) = (x_1 \cos t, 0, x_3 \sin t) $ where $x_1, x_3$ are as in \eqref{UMBEQ}, and $\gamma_u(t) = (x_1 \cos t + x_3 \sin t, \vec 0,  - x_1 \sin t + x_3 \cos t) $, and
  $\dot{\gamma}_u(0) = (x_3, \vec 0, - x_1). $

Also, let 
  $J \dot{\gamma}_u(0)$ be the image under $\iota_*$ of the rotate $J \dot{\wt{\gamma}}(0)$  by $\frac{\pi}{2}$ of $\frac{d}{dt} \wt \gamma(t) |_{t=0}$. A normal at the umbilic 
  point is $(\frac{x_1}{a_1^2}, \frac{x_2}{a_2^2}, \frac{x_3}{a_3^2}) |_u = (\frac{x_1}{a_1^2}, 0, \frac{x_3}{a_3^2})$ where (again) $x_1, x_2$ are given in \eqref{UMBEQ}.
   Since $x_2 = 0$ on $\wt \gamma$, $J \dot{\wt{\gamma}}(0) = (v_1, v_2, v_3)$ has a non-zero $v_2$ coordinate, while $(v_1, 0, v_3) \cdot (x_3, \vec 0, - x_1) = 0$
   and $(v_1, 0, v_3)  \cdot  (\frac{x_1}{a_1^2}, 0, \frac{x_3}{a_3^2})= 0. $ A little calculation shows that the equations force $v_1 = v_3 =0$. Hence, $J \dot{\gamma}_u(0)
   = e_{2,1} $.  It follows that $T_u \iota(\ecal_{\vec a}) = \rm{Span} \{(x_3, \vec 0, - x_1), (0, (1, 0, \cdots, 0), 0)\}$. 
   
Every geodesic with initial tangent vector in $T_u \iota(\ecal_{\vec a})$ loops back to $u$ at time $2 \pi$. Every other geodesic from $u$ has initial tangent
vector in $V_u$, and therefore must also loop back in time $2 \pi$. This concludes the proof that $u$ is a self-focal point of  $\ecal_A$.

Not only is $u$ self-focal, but it is also not a pole. Indeed, it is not a pole in the subellipsoid $\iota(\ecal_{\vec a}). $
    
  \end{proof}
  
\subsubsection{Case (ii)} The remaining case (ii) is more complicated and the question  whether it has
self-focal points is left open. The question is  open even when $n =4$ and $\ecal_A$ is a three-dimensional ellipsoid with axis
multiplicities $(2,1,1)$. This is a key test of existence of self-focal points, because a necessary condition that there are self-focal points in dimension $n$
is that there exist self-focal points when $n=4$.   However, in view of the $G = SO(2)$ symmetry group, self-focal points in this case are never
fully `twisted', i.e. they come in one-parameter families.

 The geodesic flow on three-dimensional $(2,1,1)$  ellipsoids is studied in \cite{DD07} and is shown to be Liouville integrable. One of the integrals
 is the angular momentum $J$ corresponding to the $SO(2)$-action. 
It is  also shown in \cite{DD07}  that the symplectic reduction with respect to $J$ of the geodesic flow  of the $(2,1,1)$ ellipsoid $\ecal$ is the Rosochatius Hamiltonian flow on the  two-dimensional
ellipsoid, i.e. the flow of the reduced Hamiltonian  \begin{equation} \label{ROSOHAM} H = |\xi|_g^2 + \frac{j^2}{2 x_1^2}, \;\; \rm{on} \; (\{J = j\}/SO(2))  \end{equation} The orbits of
 the flow with $j \not= 0$ are constrained to lie on one side of $x_1 = 0$.  Those with $j = 0$ are the usual geodesics of $\ecal_{\vec a}$.
 Rosochatius systems on the sphere are discussed in \cite{M80}, and on the two-ellipsoid 
 \cite{DD07,Jo12}.  One has the following 
  \begin{lem} \label{ROSOLEM}  If there exists a self-focal point $u = (\vec x, x_2, x_3) \in \ecal_A$,  then the umbilic points $(\pm x_1, 0, 
\pm x_3)$ \eqref{UMBILIC}  of  $\ecal_{\vec a}$ must be
 self-focal for the  Rosochatius flows for every value of $j$.   \end{lem}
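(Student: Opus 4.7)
My plan is to leverage the isometric $SO(2)$-action on the first two coordinates of $\ecal_A$ and then pass to symplectic reduction, so that self-focality of $u$ on $\ecal_A$ translates into Rosochatius self-focality of its reduced image $\bar u$. First, by Lemma \ref{lem:curveoffoci} the $SO(2)$-orbit of $u$ consists entirely of self-focal points, so I may normalize the representative to $u = (x_1, 0, x_2, x_3)$. The slice $\{x_2 = 0\}$ is a totally geodesic two-dimensional tri-axial sub-ellipsoid isometric to $\ecal_{\vec a}$, and restricting the self-focal property of $u$ to geodesics whose initial direction is tangent to this slice shows that $u$ is self-focal on the sub-ellipsoid. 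By Jacobi's theorem together with the explicit umbilic formula (\ref{UMBILIC}), this forces $x_2 = 0$ as well, so up to signs $u = (x_1, 0, 0, x_3)$ and its image $\bar u = (x_1, 0, x_3)$ is an umbilic point of $\ecal_{\vec a}$.

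Next, I would invoke the symplectic reduction described in \cite{DD07}. The lifted $SO(2)$-action on $T^*\ecal_A$ has momentum map $J(x,\xi) = x_1\xi_2 - x_2\xi_1$, and symplectic reduction at level $j$ yields the Rosochatius Hamiltonian $H_j = |\bar\xi|_g^2 + j^2/(2 x_1^2)$ on an open subset of $T^*\ecal_{\vec a}$, as recalled in (\ref{ROSOHAM}). The reduction map $\pi_j: J^{-1}(j) \to J^{-1}(j)/SO(2)$ sends the chosen representative $(u,\xi)$ to $(\bar u, \bar\xi)$ and intertwines the geodesic flow of $\ecal_A$ with the Rosochatius flow at level $j$, since both flows commute with the $SO(2)$-action and preserve energy.

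The final step is to transfer self-focality. Fix any value of $j$ attained by some $\xi \in S^*_u \ecal_A$ and any initial condition $(\bar u, \bar\eta)$ for the Rosochatius flow $H_j$ on the appropriate energy shell. Lifting $\bar\eta$ back through $\pi_j$ gives an $SO(2)$-orbit of pairs $(u', \xi')$ with $J(u',\xi') = j$ and footpoint on the $SO(2)$-orbit through $u$; I select the unique representative with footpoint exactly $u$. Self-focality on $\ecal_A$ then gives $\gamma_{\xi'}(2\pi) = u$, and projecting under $\pi_j$ shows that the Rosochatius trajectory with initial data $(\bar u,\bar\eta)$ returns to $\bar u$ at time $2\pi$. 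Varying $\bar\eta$ over the energy shell and $j$ over admissible values yields the assertion that $\bar u$ is a self-focal point of every Rosochatius flow $H_j$.

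The main obstacle I anticipate lies in making the lifting step precise, namely verifying that every initial condition at $\bar u$ on the Rosochatius energy shell really does arise as the reduction of some unit covector $\xi \in S^*_u\ecal_A$ with prescribed $J(u,\xi) = j$. This is essentially a surjectivity and dimension check for the restriction $\pi_j |_{S^*_u\ecal_A \cap J^{-1}(j)}$, clean for generic $j$ but requiring a separate argument at the boundary value $|j| = x_1$, where the momentum level shrinks to the single direction tangent to the $SO(2)$-orbit through $u$ and the Rosochatius kinetic energy degenerates.
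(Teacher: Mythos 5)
Your overall strategy --- normalize with the $SO(2)$-action via Lemma \ref{lem:curveoffoci}, localize to a totally geodesic two-dimensional slice to force the footpoint to reduce to an umbilic point, then push self-focality through the symplectic reduction of \cite{DD07} --- is exactly the argument the paper intends; the lemma is stated there without proof, as a consequence of the preceding (unlabelled) lemma on self-focal points lying on totally geodesic sub-ellipsoids together with the reduction to \eqref{ROSOHAM}. One step, however, is justified by the wrong implication: ``by Jacobi's theorem together with \eqref{UMBILIC}'' runs backwards. Jacobi's theorem says umbilic points \emph{are} self-focal, whereas what you need is the converse, namely that the four umbilic points are the \emph{only} self-focal points of a two-dimensional tri-axial ellipsoid. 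That converse is the content of the unlabelled lemma earlier in Section \ref{3AXESSECT} (resting on the classical description of geodesics and conjugate loci on the tri-axial ellipsoid), and it is that statement, not Jacobi's theorem, which forces $x_2=0$ and identifies $\bar u$ with an umbilic point.

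The second point to repair is the scope of $j$. You fix ``any value of $j$ attained by some $\xi\in S^*_u\ecal_A$,'' i.e.\ $|j|\le x_1$, but the lemma asserts self-focality of the Rosochatius flow for \emph{every} $j$. The missing ingredient is fiber homogeneity of the geodesic flow: since $\exp_u(T\xi)=u$ for all unit $\xi$, every covector of norm $s$ at $u$ returns to $u$ at time $T/s$. With $u=(x_1,0,0,x_3)$ the normal $A^{-1}u$ has vanishing rotational component, so the constraint $J=j$ merely fixes the rotational component of $\xi$ to be $j/x_1$, and the remaining components of a norm-$s$ covector sweep out exactly the sphere $\{|\bar\eta|_g^2=s^2-j^2/x_1^2\}$ in $T^*_{\bar u}\ecal_{\vec a}$, which (up to the normalization of the centrifugal term in \eqref{ROSOHAM}) is the $H_j$-energy shell over $\bar u$. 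Letting $s$ range over $[\,|j|/x_1,\infty)$ covers every $j$ and every energy level whose shell passes through $\bar u$, and your projection argument then gives the common return time $T/s$ on each shell. This same computation also disposes of the surjectivity and boundary worries in your final paragraph: the lift exists, and is unique once the footpoint is pinned at $u$ (the $SO(2)$-action is free there because $x_1\neq 0$), while at the extreme value $|j|=sx_1$ the shell over $\bar u$ degenerates to the single covector $\bar\eta=0$, to which the identical projection argument applies, so no separate argument is needed there.
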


 Little seems to be known about Rosochatius flows.  The dynamics depends significantly
 on the value of $j$.  It would take us far afield to study them, and we leave  the question
 whether umbilic points are in fact self-focal for every Rosochatius flow for future investigation.

\end{document}